\newtheorem{theorem}{Theorem}[section]
\newtheorem*{theorem*}{Theorem}
\newtheorem{lemma}[theorem]{Lemma}
\newtheorem{proposition}[theorem]{Proposition}
\newtheorem*{proposition*}{Proposition}
\newtheorem{corollary}[theorem]{Corollary}
\newtheorem*{corollary*}{Corollary}
\newtheorem{conjecture}[theorem]{Conjecture}
\newtheorem*{conjecture*}{Conjecture}
\newtheorem*{question*}{Question}
\newtheorem*{main:main}{Theorem~\ref{thrm:main}}
\newtheorem*{main:main_cor}{Corollary~\ref{cor:main_cor}}
\newtheorem*{main:main_circle}{Theorem~\ref{thrm:circle}}
\theoremstyle{definition}
\newtheorem{definition}[theorem]{Definition}
\newtheorem{remark}[theorem]{Remark}
\newtheorem{example}[theorem]{Example}
\newcommand{\Z}{\mathbb{Z}}
\newcommand{\N}{\mathbb{N}}
\newcommand{\Q}{\mathbb{Q}}
\newcommand{\R}{\mathbb{R}}
\newcommand{\lk}{\operatorname{lk}}
\newcommand{\st}{\operatorname{st}}
\newcommand{\alk}{\lk^\uparrow\!}
\newcommand{\asst}{\st^\uparrow\!}
\newcommand{\defeq}{\mathbin{\vcentcolon =}}
\DeclareMathOperator{\Hom}{Hom}
\DeclareMathOperator{\Homeo}{Homeo}
\DeclareMathOperator{\PSL}{PSL}
\DeclareMathOperator{\F}{F}
\DeclareMathOperator{\Stab}{Stab}
\DeclareMathOperator{\RStab}{RStab}
\newcommand{\Fbr}%
   {F_{\operatorname{br}}}                 
\newcommand{\Vbr}%
   {V_{\operatorname{br}}}                 
\numberwithin{equation}{section}
\begin{document}

\title[The BNSR-invariants of the Lodha--Moore groups]{The BNSR-invariants of the Lodha--Moore groups,
 and an exotic simple group of type $\F_\infty$}
\date{\today}
\subjclass[2010]{Primary 20F65;   
                 Secondary 57M07} 

\keywords{Lodha--Moore group, Thompson group, BNSR-invariant, finiteness properties, discrete Morse theory}

\author[Y.~Lodha]{Yash Lodha}
\address{Institute of Mathematics, EPFL, Lausanne, Switzerland}
\email{yash.lodha@epfl.ch}

\author[M.~C.~B.~Zaremsky]{Matthew C.~B.~Zaremsky}
\address{Department of Mathematics and Statistics, University at Albany (SUNY), Albany, NY 12222}
\email{mzaremsky@albany.edu}

\begin{abstract}
In this paper we give a complete description of the Bieri--Neumann--Strebel--Renz invariants of the Lodha--Moore groups. The second author previously computed the first two invariants, and here we show that all the higher invariants coincide with the second one, which finishes the complete computation. As a consequence, we present a complete picture of the finiteness properties of normal subgroups of the first Lodha--Moore group. In particular, we show that every finitely presented normal subgroup of the group is of type $\F_\infty$, answering question $112$ from Oberwolfach Rep., 15(2):1579-1633, 2018. The proof involves applying a variation of Bestvina--Brady discrete Morse theory to the so called cluster complex $X$ introduced by the first author. As an application, we also demonstrate that a certain simple group $S$ previously constructed by the first author is of type $\F_\infty$. This provides the first example of a type $\F_\infty$ simple group that acts faithfully on the circle by homeomorphisms, but does not admit any nontrivial action by $C^1$-diffeomorphisms, nor by piecewise linear homeomorphisms, on any $1$-manifold.
\end{abstract}

\maketitle
\thispagestyle{empty}

\section*{Introduction}
In \cite{lodha16}, the first author and Justin Moore constructed a finitely presented non-amenable group with no non-abelian free subgroups, which has come to be known as the Lodha--Moore group (the second author of the current paper has insisted to the first author that we retain this terminology here). The first author subsequently showed the group to be of type $\F_\infty$ \cite{lodha14}, making it the first known example of a non-amenable type $\F_\infty$ group with no non-abelian free subgroups. Recall that a group is of \emph{type $F_n$} if it admits a classifying space with finite $n$-skeleton, and of \emph{type $F_\infty$} if it is of type $\F_n$ for all $n$.

In fact, the group just mentioned is one of two closely related groups introduced in \cite{lodha16}, one contained in the other, and there are also two naturally occurring intermediate groups, so collectively we will refer to these four groups as the \emph{Lodha--Moore groups}. In \cite{zaremsky16} the Lodha--Moore groups are denoted $G$, $_yG$, $G_y$, and $_yG_y$. In \cite{lodha16} the first is denoted $G_0$ and the last is denoted $G$, and in \cite{lodha14} only the first is considered and denoted $G$. Following the proof in \cite{lodha14} that $G$ is of type $\F_\infty$, it is easy to see that all four Lodha--Moore groups are of type $\F_\infty$.

The Bieri--Neumann--Strebel--Renz (BNSR) invariants $\Sigma^n(\Gamma)$ ($n\in\N$) of a group $\Gamma$ of type $\F_\infty$ are a family of geometric objects encoding a great deal of information about $\Gamma$, which have historically proven to be quite difficult to compute. In \cite{zaremsky16}, the second author computed $\Sigma^1$ and $\Sigma^2$ for each Lodha--Moore group (the computations will be recalled in Section~\ref{sec:bnsr}).

The main result of the current paper is:

\begin{main:main}
For any Lodha--Moore group $H$ and any $n\ge 2$, we have $\Sigma^n(H)=\Sigma^2(H)$.
\end{main:main}

This then provides a complete picture of all the BNSR-invariants of all the Lodha--Moore groups. The Lodha--Moore groups are close relatives of Thompson's group $F$, and this computation continues the trend of Thompson-like groups $H$ satisfying $\Sigma^n(H)=\Sigma^2(H)$ for all $n\ge 2$. This holds for example for $F$ itself \cite{bieri10}, for the generalized Thompson groups $F_{n,r}$ \cite{zaremsky17F_n}, and for the braided Thompson group $\Fbr$ \cite{zaremsky18}.

This theorem also allows us to present a complete picture of the finiteness properties of normal subgroups of the first Lodha--Moore group $G$; see Subsection~\ref{sec:classify_normal}. Since every non-trivial normal subgroup of $G$ contains the commutator subgroup $[G,G]$ \cite{burillo18}, an easy consequence of our main result is the following:

\begin{main:main_cor}
Every finitely presented normal subgroup of $G$ is of type $\F_\infty$.
\end{main:main_cor}

The non-trivial normal subgroups of $G$ have a natural bijective correspondence with the subgroups of $\Z^3$, and thanks to the computation of the $\Sigma^n(G)$ this makes it easy to precisely characterize the finiteness properties of every normal subgroup of $G$ (see Corollary~\ref{cor:classify}). For the other Lodha--Moore groups $_yG$, $G_y$, and $_yG_y$ things are not as nice as for $G$, since not every non-trivial normal subgroup contains the commutator subgroup, but it still follows from our results that every finitely presented normal subgroup containing the commutator subgroup is of type $\F_\infty$. This also answers in the affirmative Question~112 from \cite{MFO}.

The strategy involves a certain contractible CW complex $X$, constructed by the first author in \cite{lodha14} to prove that $G$ is of type $\F_\infty$. The cells of $X$ are clumped together into so called clusters, which are certain subdivisions of cubes. The key for our purposes here is that subcomplexes of vertex links in $X$ tend to be contractible, and this allows us to use Bestvina--Brady discrete Morse theory to deduce our main result.

We also use our computation of the BNSR-invariants of the Lodha--Moore groups to prove that a certain infinite simple group $S$, constructed by the first author in \cite{lodha19} and closely related to the Lodha--Moore groups, is of type $\F_\infty$. Recall that Thompson's group $T$ is the group of piecewise $\PSL_2(\Z)$-projective homeomorphisms of the circle $\mathbb{S}^1=\R\cup \{\infty\}$ with breakpoints in the set $\Q\cup \{\infty\}$. The group $S$ is generated by $T$ together with the following piecewise projective homeomorphism of the circle:
\[{\bf s}(t)=
\begin{cases}
t&\text{ if }t\le 0\\
 \frac{2t}{1+t}&\text{ if }0\le t\le 1\\
 \frac{2}{3-t}&\text{ if }1\le t\le 2\\
t&\text{ if }t\ge 2
\end{cases}
\]

\begin{main:main_circle}
The group $S$ is of type $\F_\infty$.
\end{main:main_circle}

It was demonstrated in \cite{lodha14} that $S$ is finitely presented and simple, and an explicit presentation for $S$ was computed. In the same article, it was also demonstrated that the group does not admit actions of certain types. Combining this with the above, we obtain the following.

\begin{corollary*}
There exists a type $\F_\infty$ simple group that acts faithfully by homeomorphisms on the circle, but does not admit any non-trivial action by $C^1$-diffeomorphisms, nor by piecewise linear homeomorphisms, on any $1$-manifold.
\end{corollary*}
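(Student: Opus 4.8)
The plan is to take the group $S$ itself as the required example and to assemble the statement from two essentially independent inputs. For the positive, ``acts faithfully by homeomorphisms on the circle'' part, I would simply observe that $S$ is by construction a subgroup of $\Homeo(\mathbb{S}^1)$ --- namely the subgroup generated by Thompson's group $T$ together with the explicit homeomorphism $\mathbf{s}$ displayed above --- so that the inclusion $S\hookrightarrow\Homeo(\mathbb{S}^1)$ is already a faithful action of $S$ on the circle by homeomorphisms. (Since $S$ is infinite and simple, in fact every nontrivial action of $S$ is automatically faithful, but we do not need this.) For the negative part I would invoke the results of \cite{lodha19,lodha14}, where it is established that $S$ is a finitely presented infinite simple group and, crucially, that it admits no nontrivial action by $C^1$-diffeomorphisms on any $1$-manifold, nor by piecewise linear homeomorphisms on any $1$-manifold. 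Since the circle is itself a $1$-manifold, this already records the mild tension in the statement: $S$ acts faithfully on $\mathbb{S}^1$ by homeomorphisms but not by $C^1$-diffeomorphisms.

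With these in hand, the only missing ingredient is that $S$ is of type $\F_\infty$, and this is precisely Theorem~\ref{thrm:circle}. Granting that theorem, the corollary follows at once with $\Gamma=S$: one combines the simplicity and the two non-existence statements from \cite{lodha19,lodha14} with the faithful circle action and with type $\F_\infty$. So at the level of the corollary there is essentially nothing further to do.

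The real content --- and where I expect the genuine difficulty to be concentrated --- is therefore in proving Theorem~\ref{thrm:circle}, i.e.\ that $S$ is of type $\F_\infty$. The approach I would take there is to exploit the close relationship between $S$ and the Lodha--Moore groups, realizing $S$ via data whose finiteness behaviour is governed by an ambient Lodha--Moore group $H$, and then reading off type $\F_\infty$ from our computation of the $\Sigma^n(H)$ in Theorem~\ref{thrm:main} --- specifically, from the fact that the relevant character classes of $H$ lie in $\Sigma^n(H)=\Sigma^2(H)$ for every $n\ge 2$. Once Theorem~\ref{thrm:circle} is established in this way, the corollary above requires nothing more.
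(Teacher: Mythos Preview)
Your proposal is correct and matches the paper's approach exactly: the corollary is obtained simply by combining Theorem~\ref{thrm:circle} (that $S$ is of type $\F_\infty$) with the facts established in \cite{lodha19} that $S$ is simple, acts faithfully on the circle, and admits no nontrivial $C^1$ or piecewise linear action on any $1$-manifold. Your additional sketch of how Theorem~\ref{thrm:circle} itself is proved --- via the BNSR-invariants of an ambient Lodha--Moore group --- is also broadly on target, though the paper routes this through an intermediate group $\widehat{S}=\langle T,~_yG_y\rangle$ rather than directly through a Lodha--Moore group.
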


Groups of homeomorphisms of the circle provide a rich source of examples of simple groups of type $\F_\infty$. Historically, until the appearance of $S$ in the literature,  all such examples have been groups of piecewise linear homeomorphisms. These were Thompson's group $T$ and its various generalizations due to Higman (see \cite{higman74}) and Stein (see \cite{stein92}). A novel new example appears in \cite{belk20}. Moreover, in \cite{ghys87}, Ghys and Sergiescu proved that $T$ admits a faithful action by $C^\infty$-diffeomorphisms of the circle. In fact, they show that the standard actions of $T$ (indeed all minimal actions) are topologically conjugate to such actions. It remains an open question whether the groups of Higman and Stein admit faithful actions by $C^1$-diffeomorphisms of the circle, although it was shown in \cite{bonatti19} that certain Stein groups have no faithful $C^2$-action on the circle.

The group $S$ has additional dynamical and algebraic features that distinguish it from the piecewise linear examples. For instance, in \cite{lodha19} it was observed that the natural action of $S$ on $\mathbb{S}^1$ produces a non-amenable orbit equivalence relation. Also, $S$ contains subgroups isomorphic to the Baumslag--Solitar group $\textup{BS}(1,2)$, hence it contains distorted cyclic subgroups, in contrast to $T$, which has no distorted cyclic subgroups \cite[Theorem~1.9]{burillo18a}. This combination of features makes the group $S$ a rather special member of the family of type $\F_\infty$ simple subgroups of $\Homeo^+(\mathbb{S}^1)$.

One could hope to adapt the cluster complex construction for $G$ from \cite{lodha14} to the group $S$ to prove that it is of type $\F_\infty$, but this poses considerable technical challenges. The novelty of our approach is to instead apply our computation of the BNSR-invariants of $_yG_y$ to provide a succinct resolution to this problem.

This paper is organized as follows. In Section~\ref{sec:group} we recall the Lodha--Moore groups. In Section~\ref{sec:cluster_cpxes} we recall the notion of a cluster complex, and in Section~\ref{sec:our_complex} we discuss the specific cluster complex $X$. In Section~\ref{sec:bnsr_morse} we set up the relevant background on BNSR-invariants, and a version of Bestvina--Brady discrete Morse theory that is useful for our purposes. In Section~\ref{sec:main} we apply Morse theory to $X$ to finish the computation of the BNSR-invariants of the Lodha--Moore groups. Finally, in Section~\ref{sec:circle} we prove that the simple group $S$ from \cite{lodha19} is of type $\F_\infty$.

\subsection*{Acknowledgments} We are grateful to Matt Brin and Sang-hyun Kim for helpful comments on a first draft of this paper. The first author is supported by a Swiss national science foundation \emph{Ambizione} grant. The second author is supported by grant \#635763 from the Simons Foundation.

\section{The Lodha--Moore groups}\label{sec:group}

The Lodha--Moore groups are certain groups of self-homeomorphisms of the Cantor set $\{0,1\}^\N$, which we recall in this section. First let $x,y\colon \{0,1\}^\N\to \{0,1\}^\N$ be the homeomorphisms defined recursively by
\[
\xi \cdot x \defeq \left\{\begin{array}{ll}
0\eta & \text{if }\xi=00\eta\\
10\eta & \text{if }\xi=01\eta\\
11\eta & \text{if }\xi=1\eta
\end{array}\right. \text{ and }
\xi \cdot y \defeq \left\{\begin{array}{ll}
0(\eta\cdot y) & \text{if }\xi=00\eta\\
10(\eta\cdot y^{-1}) & \text{if }\xi=01\eta\\
11(\eta\cdot y) & \text{if }\xi=1\eta \text{.}
\end{array}\right.
\]
Let $\{0,1\}^{<\N}$ be the set of all finite binary sequences of $0$s and $1$s. For any $s\in \{0,1\}^{<\N}$ let $x_s$ and $y_s$ be the self-homeomorphisms of $\{0,1\}^\N$ defined by
\[
\xi \cdot x_s \defeq \left\{\begin{array}{ll}
s(\eta\cdot x) & \text{if }\xi=s\eta\\
\xi & \text{otherwise}
\end{array}\right. \text{ and }
\xi \cdot y_s \defeq \left\{\begin{array}{ll}
s(\eta\cdot y) & \text{if }\xi=s\eta\\
\xi & \text{otherwise.}
\end{array}\right.
\]

Thompson's group $F$ is the subgroup of $\Homeo(\{0,1\}^\N)$ generated by all the $x_s$ for $s\in \{0,1\}^{<\N}$. The Lodha--Moore groups are defined as follows. First, $_yG_y$ is the subgroup of $\Homeo(\{0,1\}^\N)$ generated by Thompson's group $F$ together with all the $y_s$ for $s\in \{0,1\}^{<\N}$. Next, $G_y$ is the subgroup generated by $F$ together with all the $y_s$ such that $s$ is not of the form $0^n$, $_yG$ is the subgroup generated by $F$ together with all the $y_s$ such that $s$ is not of the form $1^n$, and $G$ is the subgroup generated by $F$ together with all the $y_s$ such that $s$ is not of the form $0^n$ or $1^n$. We will focus mostly on $G$ here, and results about $G_y$, $_yG$, and $_yG_y$ will follow easily from results about $G$.

\subsection{Characters}\label{sec:chars}

The BNSR-invariants of a group are certain sets of equivalences classes of characters of the group (we will wait until Subsection~\ref{sec:bnsr} to give the technical definition). Here a \emph{character} of a group is a homomorphism from the group to the additive real numbers. In order to understand the BNSR-invariants of a group $\Gamma$, one first needs to understand the euclidean vector space of characters $\Hom(\Gamma,\R)$.

Let us recall the structure of $\Hom(G,\R)$. It follows easily from the presentation of $G$ given in \cite{lodha16} that the abelianization of $G$ is $\Z^3$ and so $\Hom(G,\R)\cong\R^3$. As a basis for $\Hom(G,\R)$ we will use the characters denoted $\chi_0,\chi_1,\psi$ in \cite{zaremsky16}. These are defined as follows. We use the notation $\mathcal{L}=\{0,1\}^{<\N}\setminus \{0^n,1^n\mid n\ge 0\}$, so $G$ is generated by all $x_s$ for $s\in\{0,1\}^{<\N}$ and all $y_s$ for $s\in\mathcal{L}$.

\[
\chi_0(x_s)=\begin{cases}
-1&\text{ if }s\in \{0^n\mid n\ge 0\}\\
0&\text{ if }s\notin \{0^n\mid n\ge 0\}\\
\end{cases}
\qquad \chi_0(y_s)=0, \forall s\in \mathcal{L},
\]

\[
\chi_1(x_s)=
\begin{cases}
1&\text{ if }s\in \{1^n\mid n\ge 0\}\\
0&\text{ if }s \notin \{1^n\mid n\ge 0\}\\
\end{cases}
\qquad \chi_1(y_s)=0, \forall s\in \mathcal{L}
\]

\[
\psi(x_s)=0, \forall s\in \{0,1\}^{<\N}
\qquad \psi(y_s)=1, \forall s\in \mathcal{L} \text{.}
\]

For the other Lodha--Moore groups $H$ we also have $\Hom(H,\R)\cong \R^3$, but with different bases for different $H$. This is explained in \cite{zaremsky16}, and we recall it here. First define two more characters, $\psi_0$ and $\psi_1$:

\[
\psi_0(x_s)=0, \forall s\in \{0,1\}^{<\N}
\qquad \psi_0(y_s)=\begin{cases}
1&\text{ if }s\in \{0^n\mid n\ge 0\}\\
0&\text{ if }s\notin \{0^n\mid n\ge 0\}\\
\end{cases}
\]

\[
\psi_1(x_s)=0, \forall s\in \{0,1\}^{<\N}
\qquad \psi_1(y_s)=\begin{cases}
1&\text{ if }s\in \{1^n\mid n\ge 0\}\\
0&\text{ if }s\notin \{1^n\mid n\ge 0\} \text{.}
\end{cases}
\]

Now $\Hom(G_y,\R)$ has basis $\{\chi_0,\psi_1,\psi\}$, $\Hom(_yG,\R)$ has basis $\{\psi_0,\chi_1,\psi\}$, and $\Hom(_yG_y,\R)$ has basis $\{\psi_0,\psi_1,\psi\}$. Note that $\psi_0$ is trivial on $G$ and $G_y$, $\psi_1$ is trivial on $G$ and $_yG$, $\chi_0$ is not well defined on $_yG$ or $_yG_y$, and $\chi_1$ is not well defined on $G_y$ or $_yG_y$. In particular for each Lodha--Moore group $H$, precisely three of the characters $\chi_0,\chi_1,\psi_0,\psi_1,\psi$ are well defined and non-trivial on $H$, and these form a basis for $\Hom(H,\R)$.

\section{Cluster complexes}\label{sec:cluster_cpxes}

In this section we recall the notion of cluster complexes, as described by the first author in \cite{lodha14}.

Consider $\R^n$ with variables $x_1,\dots,x_n$, representing coordinates in the usual orthonormal basis. We view $\{x_1,\dots,x_n\}$ as an ordered set, with the order induced by the usual ordering on the indices.

A \emph{hyperplane arrangement} $\mathcal{A}$ is a finite set of affine hyperplanes in $\R^n$. A \emph{region} of $\mathcal{A}$ is a connected component of $\R^n\setminus \bigcup_{H\in \mathcal{A}}H$. Let $\mathcal{R(A)}$ denote the set of regions of $\mathcal{A}$. Note that every region is open and convex, and hence homeomorphic to the interior of the $n$-dimensional ball.

The \emph{flats} of a hyperplane arrangement $\mathcal{A}$ are the affine subspaces of $\R^n$ obtained by taking intersections of some number of hyperplanes in $\mathcal{A}$. Note that the trivial intersection counts as a flat, and equals $\R^n$. Given a flat $T$, we denote by $\mathcal{A}\restriction T$ the hyperplane arrangement consisting of the set of hyperplanes
\[
\{ T\cap H\mid H\in \mathcal{A}, T\not \subseteq H\}
\]
in the subspace $T$. We define the regions of $T$ in a similar way as above, i.e., $\mathcal{R(A)}\restriction T$ equals the set of connected components of $T\setminus  \bigcup_{H\in \mathcal{A}\restriction T} H$.

The union of the regions
\[
\bigcup_{T\text{ is a flat of }\mathcal{A}} \mathcal{R(A)}\restriction T
\]
is called the \emph{face complex} of the arrangement $\mathcal{A}$. The face complex provides a cellular structure on $\R^n\bigcup \partial\R^n$.

We consider two types of hyperplanes in $\R^n$.
\begin{enumerate}
 \item A hyperplane is of \emph{type $1$} if it is of the form $\{x_i=0\}$ or $\{x_i=1\}$ for some $1\le i\le n$.
 \item A hyperplane is of \emph{type $2$} if it is of the form $\{x_i=x_{i+1}\}$ for some $1\le i\le n-1$.
\end{enumerate}

Let $\mathcal{A}$ be a hyperplane arrangement in $\R^n$. We say that $\mathcal{A}$ is \emph{admissible} if:
\begin{enumerate}
 \item Each hyperplane in $\mathcal{A}$ is of type $1$ or of type $2$, and
 \item $\mathcal{A}$ contains all the hyperplanes of type $1$.
\end{enumerate}

\begin{definition}[Cluster]\label{def:cluster}
An \emph{$n$-cluster} is a CW subdivision of $[0,1]^n$ obtained by restricting the face complex of an admissible hyperplane arrangement to $[0,1]^n$. For an admissible hyperplane arrangement $\mathcal{A}$ in $\R^n$, we denote the corresponding cluster by $\mathcal{C(A)}$, and call it the $n$-cluster (or simply cluster) \emph{associated with} $\mathcal{A}$.
\end{definition}

Now we have a bijective correspondence:
\[
\{\text{admissible hyperplane arrangements in }\R^n\}\leftrightarrow \{\text{$n$-clusters}\}\text{.}
\]

\begin{example}
As an easy example, the admissible arrangement given by the set of all type $1$ hyperplanes corresponds to the standard CW structure of the regular euclidean cube $[0,1]^n$. As a non-cubical example, if $\mathcal{A}$ consists of all type $1$ hyperplanes of $\R^2$ and the type $2$ hyperplane $\{x_1=x_2\}$, then the resulting $2$-cluster is a square with a diagonal connecting $(0,0)$ and $(1,1)$. This has four $0$-cells, five $1$-cells, and two triangular $2$-cells.
\end{example}

Our next step is to define the notion of a \emph{subcluster} of a cluster. Fix an admissible hyperplane arrangement $\mathcal{A}$, and let $\mathcal{C(A)}\subseteq \R^n$ be the associated $n$-cluster.

\begin{definition}[Subcluster]
A \emph{subcluster} of $\mathcal{C(A)}$ is a subcomplex obtained by intersecting $\mathcal{C(A)}$ with a flat in $\mathcal{A}$. If the flat is a hyperplane in $\mathcal{A}$, call the subcluster \emph{codimension-$1$}.
\end{definition}

A codimension-$1$ subcluster automatically inherits a description in terms of an admissible hyperplane arrangement in $\R^{n-1}$. To see this, let $H\in \mathcal{A}$ be a hyperplane, so $H$ is of the form
\[
\{x_k=x_{k+1}\}\text{, }\qquad \{x_k=1\}\text{, }\qquad\text{ or }\qquad \{x_k=0\}
\]
for some $1\le k\le n$. In each case we endow $H\cong \R^{n-1}$ with a set of coordinates $y_1,\dots,y_{n-1}$, where $y_j=x_j$ if $j<k$ and $y_j=x_{j+1}$ if $k\le j\le n-1$. The restriction $\mathcal{A}\restriction H$ provides an admissible hyperplane arrangement for $H$. Iterating this, we see that any subcluster inherits a description in terms of an admissible hyperplane arrangement. In particular a subcluster of a cluster is itself a cluster.

We categorize the subclusters of a given cluster as \emph{facial} and \emph{diagonal}. This is done by first separating the flats in $\mathcal{A}$ into two types. We say that a flat in $\mathcal{A}$ is of \emph{type 1} if it is of the form $\bigcap_{H\in A}H$,
where $A\subseteq \mathcal{A}$ is a collection of type $1$ hyperplanes. We say that a flat $T$ in $\mathcal{A}$ is of \emph{type 2} if is of the form $T=\bigcap_{H\in A}H$, for some $A\subseteq \mathcal{A}$ such that $A$ contains at least one (type 2) hyperplane of the form $\{x_k=x_{k+1}\}$ and $T$ is not contained in any of the hyperplanes
\[
\{x_k=0\}\text{, }\qquad \{x_k=1\}\text{, }\qquad \{x_{k+1}=0\}\text{, }\qquad \text{ or }\qquad \{x_{k+1}=1\}\text{.}
\]

\begin{definition}[Facial, diagonal subclusters]
Given a cluster, consider a subcluster obtained by taking an intersection of the cluster with a flat $T$. If $T$ is a type $1$ flat, then the subcluster is a \emph{facial subcluster}. If $T$ is a type $2$ flat, then the subcluster is a \emph{diagonal subcluster}.
\end{definition}

Let us record the following remark, which will be important later in Subsection~\ref{sec:morse}.

\begin{remark}\label{rmk:cluster_cvx}
The cells of a cluster are copies of convex polyhedra in euclidean space. The vertex set of any cell in a cluster is convex independent, i.e., no vertex lies in the convex span of the others, and any cell equals the convex span of its vertices.
\end{remark}

We can now define the notion of a cluster complex.

\begin{definition}[Cluster complex]
A \emph{cluster complex} is a CW complex obtained by gluing clusters along subclusters using cellular homeomorphisms.
\end{definition}

Note that if a cluster complex $C$ is obtained by gluing clusters along facial subclusters (never along diagonal subclusters), then $C$ is a subdivision of a cube complex. In this case let us call $C$ a \emph{cubical cluster complex}, and we will sometimes identify it with the cube complex of which it is a subdivision. If $C'$ is a subcomplex of a cluster complex $C$ such that $C'$ is a cubical cluster complex, we will call it a \emph{cubical subcomplex} of $C$.

\begin{remark}
A cube complex is \emph{non-positively curved} if the link of every vertex is a flag complex, meaning a simplicial complex where every finite collection of vertices pairwise spanning edges spans a simplex. One important property of non-positively curved cube complexes is that they are aspherical. In \cite{lodha14}, an analogous notion of a \emph{non-positively curved cluster complex} was defined. It was shown in \cite{lodha14} that the cluster complex $X$ defined in the next section is non-positively curved in this sense.
\end{remark}

\section{The complex $X$}\label{sec:our_complex}

In this section we recall the description of the complex of clusters $X$ on which $G$ acts from \cite{lodha14}. The $0$-skeleton $X^{(0)}$ is the set of right cosets $Fg$ of Thompson's group $F$ in $G$. To describe the $1$-skeleton we need some definitions.

\begin{definition}[Consecutive, alternating, special form]
We call a pair $s,t$ of elements of $\{0,1\}^{<\N}$ \emph{consecutive} if there exists $u\in \{0,1\}^{<\N}$ and $m,n\in\N\cup\{0\}$ such that $s=u01^m$ and $t=u10^n$. Call a list $s_1,\dots,s_k$ \emph{consecutive} if each pair $s_i,s_{i+1}$ is consecutive. Call a list $\varepsilon_1,\dots,\varepsilon_k$ of elements of $\{\pm1\}$ \emph{alternating} if $\varepsilon_{i+1}=-\varepsilon_i$ for all $i$. A word in the generators $y_s$ is called a \emph{special form} if it is of the form $y_{s_1}^{\varepsilon_1}\cdots y_{s_k}^{\varepsilon_k}$ for $s_1,\dots,s_k$ consecutive and $\varepsilon_1,\dots,\varepsilon_k$ alternating.
\end{definition}

Let $\mathcal{S}$ be the set of all elements of $G$ represented by special forms. We will use the same symbol $\nu$ for a special form and the element of $G$ it represents, and this will not cause any confusion. Note that since we are in $G$, any $y_s$ that we use in a special form must satisfy that $s$ is not of the form $0^n$ or $1^n$.

\begin{definition}[The $1$-skeleton]
$X^{(1)}$ is the simplicial graph defined by declaring that two vertices $Fg$ and $Fg'$ in $X$ are connected by an edge whenever $Fg'=F\nu g$ for some $\nu\in \mathcal{S}$. 
\end{definition}

Note that $G$ is generated by $F$ and $\mathcal{S}$, so $X^{(1)}$ is connected.

\subsection{Higher dimensional cells}\label{sec:higher_cells}

Now we shall describe the higher dimensional cells of $X$. Let us first recall the following.

\begin{definition}[Independent]
Given a collection $s_1,\dots,s_k$ of elements of $\{0,1\}^{<\N}$, the elements are \emph{independent} if no $s_i$ is a prefix of any other $s_j$. Note that if $\nu=y_{s_1}^{\varepsilon_1}\cdots y_{s_k}^{\varepsilon_k}$ is a special form then $s_1,\dots,s_k$ are independent. Two special forms $\nu_1,\nu_2$ are \emph{independent} if $\nu_1=y_{s_1}^{\varepsilon_1}\cdots y_{s_k}^{\varepsilon_k}$ and $\nu_2=y_{t_1}^{\delta_1}\cdots y_{t_\ell}^{\delta_\ell}$ with 
each $s_i$ independent from each $t_j$. More generally special forms $\nu_1,\dots,\nu_m$ are \emph{independent} if they are pairwise independent. Note that independent special forms pairwise commute.
\end{definition}

Given a vertex $Fg$, called the \emph{base}, and independent special forms $\nu_1,\dots,\nu_k$, called the \emph{parameters}, it was shown in \cite{lodha14} that these data describe a natural copy of the $1$-skeleton of a $k$-cluster living in $X^{(1)}$. Namely, the vertices of the $k$-cluster are the $F\nu_{i_1}\cdots\nu_{i_p}g$ for $i_1,\dots,i_p\in\{1,\dots,k\}$ (the order of multiplication does not matter, since the $\nu_i$ are independent), and there is an edge between each $F\nu_{j_1}\cdots\nu_{j_q}g$ and $F\nu_{i_1}\cdots\nu_{i_p} g$ whenever
\[
\nu_{j_1}\cdots \nu_{j_q}\nu^{-1}_{i_1}\cdots \nu^{-1}_{i_p}
\]
is a special form.

Let us illustrate this with some examples.

\begin{example}
The induced subgraph on the vertices $F$, $Fy_{01}$, $Fy_{10}^{-1}$, $Fy_{01}y_{10}^{-1}$ is not a square, but rather a square with one diagonal edge from $F$ to $Fy_{01}y_{10}^{-1}$. This edge is present because $y_{01}y_{10}^{-1}$ is a special form. The other diagonal edge is not present, since $y_{01}y_{10}$ is not a special form. In contrast, the induced subgraph on the vertices $F$, $Fy_{01}$, $Fy_{110}^{-1}$, $Fy_{01}y_{110}^{-1}$ is a square, since neither $y_{01}y_{110}^{-1}$ nor $y_{01}y_{110}$ is a special form (since $01,110$ is not consecutive) and so neither diagonal edge is present.
\end{example}

\begin{example}
If our base is $F$ and our parameters are $y_{s0}$, $y_{s10}^{-1}$, and $y_{s11}$ for some $s\ne 0^n,1^n$ (see Figure~\ref{fig:crowded_cluster}), then in particular the corresponding $3$-cube has the long diagonal edge from $F$ to $Fy_{s0}y_{s10}^{-1}y_{s11}$ because $y_{s0}y_{s10}^{-1}y_{s11}$ is a special form. One can compute that $x_sy_{s0}y_{s10}^{-1}y_{s11}=y_s$, and so $Fy_{s0}y_{s10}^{-1}y_{s11}=Fy_s$, i.e., the long diagonal of this cube has endpoints $F$ and $Fy_s$. This shows that we cannot hope to avoid subdivisions of cubes by only using all the $y_s$ rather than all the special forms.
\end{example}

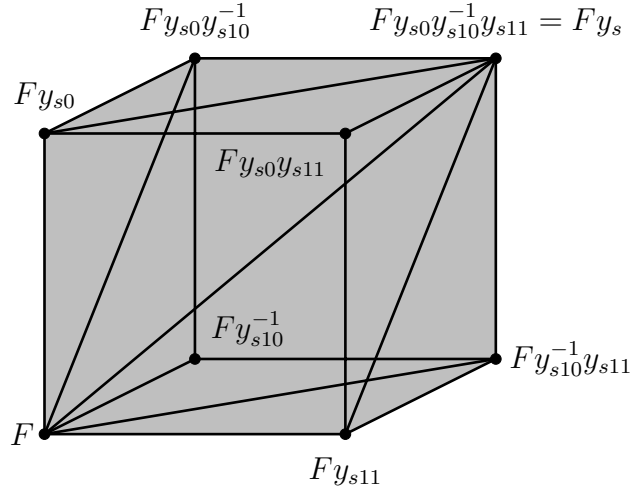
\begin{figure}[htb]
\centering\begin{tikzpicture}

\coordinate (a) at (0,0);
\coordinate (b) at ($(a)+(4,0)$);
\coordinate (c) at ($(a)+(0,4)$);
\coordinate (d) at ($(a)+(4,4)$);
\coordinate (e) at ($(a)+(2,1)$);
\coordinate (f) at ($(b)+(2,1)$);
\coordinate (g) at ($(c)+(2,1)$);
\coordinate (h) at ($(d)+(2,1)$);

\filldraw[lightgray] (a) -- (b) -- (f) -- (h) -- (g) -- (c) -- (a);

\draw[line width=1] (a) -- (b) -- (d) -- (c) -- (a) -- (e) -- (f) -- (h) -- (g) -- (e)   (b) -- (f)   (d) -- (h)   (c) -- (g);
\draw[line width=1] (a) -- (g)   (a) -- (f)   (a) -- (h)   (b) -- (h)   (c) -- (h);

\filldraw (a) circle (2pt);
\filldraw (b) circle (2pt);
\filldraw (c) circle (2pt);
\filldraw (d) circle (2pt);
\filldraw (e) circle (2pt);
\filldraw (f) circle (2pt);
\filldraw (g) circle (2pt);
\filldraw (h) circle (2pt);

\node at ($(a)+(-.3,0)$) {$F$};
\node at ($(b)+(0,-.5)$) {$Fy_{s11}$};
\node at ($(c)+(0,.5)$) {$Fy_{s0}$};
\node at ($(d)+(-1,-.4)$) {$Fy_{s0}y_{s11}$};
\node at ($(e)+(.7,.4)$) {$Fy_{s10}^{-1}$};
\node at ($(f)+(1,0)$) {$Fy_{s10}^{-1}y_{s11}$};
\node at ($(g)+(0,.5)$) {$Fy_{s0}y_{s10}^{-1}$};
\node at ($(h)+(0,.5)$) {$Fy_{s0}y_{s10}^{-1}y_{s11}=Fy_s$};

\end{tikzpicture}
\caption{The $3$-cluster based at $F$ parameterized by the special forms $y_{s0}$, $y_{s10}^{-1}$, and $y_{s11}$.}
\label{fig:crowded_cluster}
\end{figure}

Now the higher dimensional cells of $X$ are simply given by adding clusters to $1$-skeletons of clusters in $X^{(1)}$. It was shown in \cite{lodha14} that this filling is well defined and that the resulting complex $X$ is a complex of clusters. (In particular, the intersection of two clusters is a subcluster of each.) For details we refer the reader to \cite{lodha14}, and we shall make explicit any fact from that section that we use in this article.

See Figures~\ref{fig:crowded_cluster}, \ref{fig:medium_cluster}, and~\ref{fig:empty_cluster} for some examples of $3$-clusters in $X$. More diagonal edges are present when more products of the parameters are themselves special forms. For example $y_{s0}y_{s10}^{-1}y_{s11}$ is a special form, so the long diagonal is present in Figure~\ref{fig:crowded_cluster}, but $y_{s0}y_{s10}^{-1}y_{s111}$ and $y_{s00}y_{s10}^{-1}y_{s111}$ are not (since neither $s0,s10,s111$ nor $s00,s10,s111$ is consecutive), so the long diagonal is not present in Figures~\ref{fig:medium_cluster} or~\ref{fig:empty_cluster}.

\begin{figure}[htb]
\centering\begin{tikzpicture}

\coordinate (a) at (0,0);
\coordinate (b) at ($(a)+(4,0)$);
\coordinate (c) at ($(a)+(0,4)$);
\coordinate (d) at ($(a)+(4,4)$);
\coordinate (e) at ($(a)+(2,1)$);
\coordinate (f) at ($(b)+(2,1)$);
\coordinate (g) at ($(c)+(2,1)$);
\coordinate (h) at ($(d)+(2,1)$);

\filldraw[lightgray] (a) -- (b) -- (f) -- (h) -- (g) -- (c) -- (a);

\draw[line width=1] (a) -- (b) -- (d) -- (c) -- (a) -- (e) -- (f) -- (h) -- (g) -- (e)   (b) -- (f)   (d) -- (h)   (c) -- (g);
\draw[line width=1] (a) -- (g)   (b) -- (h);

\filldraw (a) circle (2pt);
\filldraw (b) circle (2pt);
\filldraw (c) circle (2pt);
\filldraw (d) circle (2pt);
\filldraw (e) circle (2pt);
\filldraw (f) circle (2pt);
\filldraw (g) circle (2pt);
\filldraw (h) circle (2pt);

\node at ($(a)+(-.3,0)$) {$F$};
\node at ($(b)+(0,-.5)$) {$Fy_{s111}$};
\node at ($(c)+(0,.5)$) {$Fy_{s0}$};
\node at ($(d)+(-1,-.4)$) {$Fy_{s0}y_{s111}$};
\node at ($(e)+(.7,.4)$) {$Fy_{s10}^{-1}$};
\node at ($(f)+(1,0)$) {$Fy_{s10}^{-1}y_{s111}$};
\node at ($(g)+(0,.5)$) {$Fy_{s0}y_{s10}^{-1}$};
\node at ($(h)+(0,.5)$) {$Fy_{s0}y_{s10}^{-1}y_{s111}$};

\end{tikzpicture}
\caption{The $3$-cluster based at $F$ parameterized by the special forms $y_{s0}$, $y_{s10}^{-1}$, and $y_{s111}$.}
\label{fig:medium_cluster}
\end{figure}
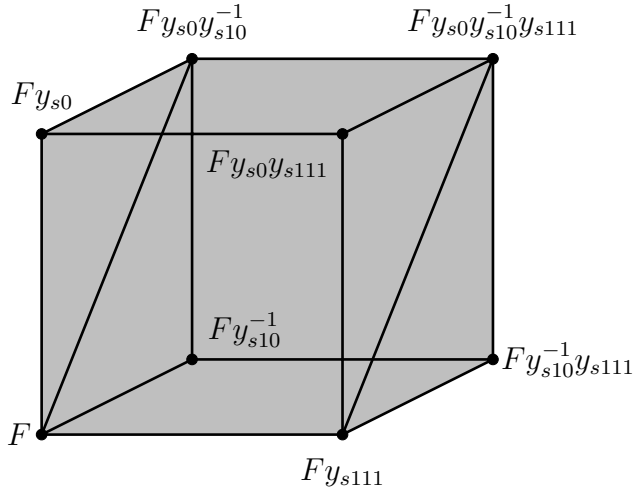

\begin{figure}[htb]
\centering\begin{tikzpicture}

\coordinate (a) at (0,0);
\coordinate (b) at ($(a)+(4,0)$);
\coordinate (c) at ($(a)+(0,4)$);
\coordinate (d) at ($(a)+(4,4)$);
\coordinate (e) at ($(a)+(2,1)$);
\coordinate (f) at ($(b)+(2,1)$);
\coordinate (g) at ($(c)+(2,1)$);
\coordinate (h) at ($(d)+(2,1)$);

\filldraw[lightgray] (a) -- (b) -- (f) -- (h) -- (g) -- (c) -- (a);

\draw[line width=1] (a) -- (b) -- (d) -- (c) -- (a) -- (e) -- (f) -- (h) -- (g) -- (e)   (b) -- (f)   (d) -- (h)   (c) -- (g);

\filldraw (a) circle (2pt);
\filldraw (b) circle (2pt);
\filldraw (c) circle (2pt);
\filldraw (d) circle (2pt);
\filldraw (e) circle (2pt);
\filldraw (f) circle (2pt);
\filldraw (g) circle (2pt);
\filldraw (h) circle (2pt);

\node at ($(a)+(-.3,0)$) {$F$};
\node at ($(b)+(0,-.5)$) {$Fy_{s111}$};
\node at ($(c)+(0,.5)$) {$Fy_{s00}$};
\node at ($(d)+(-1,-.4)$) {$Fy_{s00}y_{s111}$};
\node at ($(e)+(.7,.4)$) {$Fy_{s10}^{-1}$};
\node at ($(f)+(1,0)$) {$Fy_{s10}^{-1}y_{s111}$};
\node at ($(g)+(0,.5)$) {$Fy_{s00}y_{s10}^{-1}$};
\node at ($(h)+(0,.5)$) {$Fy_{s00}y_{s10}^{-1}y_{s111}$};

\end{tikzpicture}
\caption{The $3$-cluster based at $F$ parameterized by the special forms $y_{s00}$, $y_{s10}^{-1}$, and $y_{s111}$.}
\label{fig:empty_cluster}
\end{figure}

The following summarizes some important properties of $X$, which were all shown in \cite{lodha14}.

\begin{theorem}\cite{lodha14}\label{thrm:mainlodha14}
The action of the group $G$ on the cluster complex $X$ by cell permuting homeomorphisms satisfies the following:
\begin{enumerate}
\item $X$ is contractible.
\item The quotient $X/G$ has finitely many cells in each dimension.
\item The stabilizer of each cell is of type $\F_\infty$.
\end{enumerate}
It follows that the group $G$ is of type $\F_\infty$.
\end{theorem}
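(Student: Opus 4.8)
Items (1)--(3) are the conclusions of the construction carried out in \cite{lodha14}, so the plan is to recall the mechanism behind each and then to extract type $\F_\infty$ for $G$ from the standard topological criterion for finiteness properties of a group acting on a contractible complex. For (2), note first that $G$ acts on the right-coset $0$-skeleton by $Fg\mapsto Fgh$, and this action is transitive, so there is a single orbit of vertices; more generally a $k$-dimensional cluster of $X$ is determined by a base coset $Fg$ together with $k$ independent special forms $\nu_1,\dots,\nu_k$, and translating by $g^{-1}$ carries any such cluster to one based at the identity coset $F$. A cluster based at $F$ is then controlled, modulo the residual action of the vertex stabilizer $F$, by the alternating sign pattern of the $\nu_i$ together with the consecutive/independence combinatorics of the underlying finite binary words, and a finite case analysis shows that in each fixed dimension this amounts to only finitely much data. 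Since every cluster is subdivided into finitely many cells, $X/G$ has finitely many cells in each dimension.

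For (3), the stabilizer of the base vertex $F$ is exactly Thompson's group $F$, which is of type $\F_\infty$ by the classical theorem of Brown and Geoghegan. For a higher cell the full stabilizer contains the pointwise stabilizer as a finite-index subgroup, the quotient being a subgroup of the finite symmetry group of the cell; and one checks that the pointwise stabilizer is, up to finite index, a direct product of copies of Thompson's group $F$ and closely related Thompson-like groups supported on the disjoint dyadic pieces picked out by the parameters. Each such group is of type $\F_\infty$, and the class of type $\F_\infty$ groups is closed under finite direct products, extensions by finite groups, and passage to finite-index subgroups, so every cell stabilizer is of type $\F_\infty$. Item (1) is the technical core of \cite{lodha14}: one verifies that $X$ is simply connected and non-positively curved in the cluster-complex sense recalled above, so that the cluster-complex analogue of the Cartan--Hadamard theorem forces $X$ to be contractible; alternatively one establishes contractibility directly by a discrete Morse argument, using a height function on $X^{(0)}$ that records the complexity of a normal form for a coset representative and checking that the descending links are contractible.

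Granting (1)--(3), type $\F_\infty$ for $G$ follows from Brown's criterion: if a group $\Gamma$ acts on a contractible CW complex with finitely many $\Gamma$-orbits of cells in each dimension and with every cell stabilizer of type $\F_\infty$, then $\Gamma$ is of type $\F_\infty$. Concretely one filters $X$ by the cocompact $\Gamma$-subcomplexes assembled from finitely many orbits of cells, uses that all cell stabilizers are of type $\F_\infty$ to control the connectivity of this filtration, and passes to the limit.

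The step I expect to be the real obstacle is (1). Contractibility of $X$ is not formal: one must either verify the cluster-complex link condition throughout $X$ --- which is combinatorially delicate precisely because the diagonal subclusters (see Figure~\ref{fig:crowded_cluster}) make vertex links strictly larger and more intricate than in an ordinary cube complex --- or design a Morse height function whose descending links can be shown to be contractible. By contrast, the orbit count in (2) and the identification of the cell stabilizers in (3) are routine, if somewhat lengthy, bookkeeping with special forms, and the concluding implication is entirely standard.
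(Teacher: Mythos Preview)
The paper does not actually prove this theorem: it is stated as a citation of \cite{lodha14} and no proof is given here, since the present paper only needs to \emph{use} the conclusions (contractibility of $X$, cocompactness on skeleta, and type $\F_\infty$ stabilizers) as input for the Morse-theoretic computation of the BNSR-invariants. So there is nothing to compare your argument against in this paper.

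That said, your outline is a faithful summary of the strategy in \cite{lodha14}. A couple of small corrections: for (1), the route taken there is the non-positively-curved cluster-complex argument (as noted in the Remark at the end of Section~\ref{sec:cluster_cpxes}), not a Morse argument on normal-form complexity; and for (3), the cell stabilizers are subgroups of conjugates of $F$, so the analysis is somewhat simpler than the ``direct product of Thompson-like pieces'' picture you sketch, though your closure-properties reasoning is of course valid. Your identification of (1) as the substantive step and of Brown's criterion as the mechanism for the final conclusion is exactly right.
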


\subsection{Links}\label{sec:links}

It will be very important in what follows to understand vertex links in $X$. The \emph{link} $\lk(v)$ of a vertex $v$ in a complex is the space of directions out of $v$. Since the cells of a cluster complex are copies of convex polyhedra (Remark~\ref{rmk:cluster_cvx}), the link $\lk(v)$ also has the structure of a CW complex whose cells are convex polyhedra. Note that since the group $G$ acts transitively on $X^{(0)}$, the links of all vertices are naturally homeomorphic to each other. Hence it suffices to study the link of the vertex represented by the trivial coset $F$.

\begin{lemma}\label{lem:cone}
Let $C_1,\dots,C_k$ be clusters in $X$ based at $F$. Then there exists $m\in\N$ such that for each $1\le i\le k$ there is a cluster $D_i$ based at $F$, containing $C_i$ and the $1$-cluster with vertices $F$ and $Fy_{0^m1}$. In particular every finite subcomplex of $\lk(F)$ lies in a contractible subcomplex of $\lk(F)$, and so $\lk(F)$ is contractible.
\end{lemma}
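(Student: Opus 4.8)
The plan is to produce, for the given finite collection of clusters $C_1,\dots,C_k$ based at $F$, a single "far away" generator direction out of $F$ that can be incorporated into a cluster together with each $C_i$, and then to exploit the fact that a cluster is a subdivided cube (hence contractible and, more importantly, a cone) to contract everything onto that direction. First I would unwind the data defining the $C_i$: each $C_i$ is a cluster based at $F$ with some independent special-form parameters, and the parameters appearing across all the $C_i$ form a finite set of special forms $\nu_1,\dots,\nu_N$, each of which is a product $y_{s_1}^{\varepsilon_1}\cdots y_{s_\ell}^{\varepsilon_\ell}$ with $s_1,\dots,s_\ell$ consecutive (and in particular independent). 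All the binary words $s$ that occur in all these special forms form a finite set $W\subseteq\{0,1\}^{<\N}$, and I would choose $m$ large enough that $0^m1$ is a proper extension of none of the words in $W$ and, more to the point, is independent from every word in $W$ — this is possible because only finitely many words in $W$ are prefixes of sequences beginning $0^m$, and none of them equals $0^j$ for any $j$ (since special forms in $G$ never use $y_{0^j}$), so for $m$ sufficiently large the word $0^m1$ has no element of $W$ as a prefix, and conversely no element of $W$ has $0^m1$ as a prefix. Hence $y_{0^m1}$ (which is itself a legal one-term special form, as $0^m1\notin\{0^n,1^n\}$) is independent from every $\nu_j$.

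Next I would build the clusters $D_i$. Fix $i$; the cluster $C_i$ has independent special-form parameters $\mu_1,\dots,\mu_p$ (a subset of the $\nu_j$). Since $y_{0^m1}$ is independent from each $\mu_r$, the special forms $\mu_1,\dots,\mu_p,y_{0^m1}$ are independent, so by the construction of higher-dimensional cells in Section~\ref{sec:our_complex} they parameterize a cluster $D_i$ based at $F$ of one dimension higher than $C_i$. By construction $D_i$ contains $C_i$ as the subcluster obtained by setting the last parameter coordinate to $0$ (a facial subcluster), and it contains the $1$-cluster on $F$ and $Fy_{0^m1}$ as the subcluster obtained by setting all the $\mu_r$-coordinates to $0$. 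This establishes the first assertion.

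For the "in particular" clause, I would argue as follows. Let $L\subseteq\lk(F)$ be any finite subcomplex. Each cell of $L$ is a direction at $F$ into some cell of $X$ containing $F$, hence into some cluster based at $F$; covering the finitely many cells of $L$ we obtain finitely many clusters $C_1,\dots,C_k$ based at $F$ whose links at $F$ cover $L$. Apply the first part to get $m$ and the clusters $D_i$. Now each $D_i$ is a subdivided cube with $F$ as a vertex, and a subdivided cube is star-shaped (it is the convex hull of its vertices, by Remark~\ref{rmk:cluster_cvx}, in particular it deformation retracts onto any vertex via straight lines); consequently $\lk_{D_i}(F)$ is a cone with cone point the direction $d$ toward $Fy_{0^m1}$ — indeed the straight-line homotopy in $D_i$ that slides $F$ toward $Fy_{0^m1}$ restricts to a contraction of $\lk_{D_i}(F)$ onto $d$. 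Since every $\lk_{D_i}(F)$ contains $d$ and contains $\lk_{C_i}(F)$, the union $\bigcup_i \lk_{D_i}(F)$ is a finite subcomplex of $\lk(F)$ that contains $L$, contains the common point $d$, and is a union of cones on the common point $d$, hence is itself contractible (it deformation retracts to $d$, performing the cone contractions simultaneously — they agree on overlaps because each is the radial contraction toward $d$). Thus every finite subcomplex of $\lk(F)$ is contained in a contractible finite subcomplex; since $\lk(F)$ is the directed union of its finite subcomplexes, all its homotopy groups vanish, and being a CW complex it is contractible. Finally, transitivity of the $G$-action on $X^{(0)}$ transports this to every vertex link.

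The main obstacle I anticipate is the independence bookkeeping in the first paragraph: verifying that a single $0^m1$ works simultaneously for all the parameters of all the $C_i$, i.e., that it is independent from every word occurring in every special form in play, and that the resulting list of parameters is genuinely independent so that it does parameterize a cluster in $X$. Everything after that — that a subdivided cube is a cone and hence so is its vertex link, and that a finite union of cones on a common point is contractible — is soft and standard.
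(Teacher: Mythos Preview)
Your proof is correct and takes essentially the same approach as the paper's: choose $m$ large so that $0^m1$ is independent from every subscript appearing in the parameters of the $C_i$, adjoin $y_{0^m1}$ as an extra parameter to each $C_i$ to form $D_i$, and conclude that the relevant piece of $\lk(F)$ cones off onto the direction $d$ toward $Fy_{0^m1}$. The only cosmetic difference is in the last step: the paper simply asserts that $\lk_{U_D}(F)$ contains the combinatorial cone $\lk_{U_C}(F)*d$ as a subcomplex, whereas you argue that all of $\bigcup_i\lk_{D_i}(F)$ deformation retracts to $d$ via ``radial contractions'' --- your justification that these agree on overlaps is a little brisk (it tacitly uses that overlapping subclusters inherit compatible affine structures), but the paper's phrasing sidesteps this and the underlying idea is the same.
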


\begin{proof}
The finitely many $C_i$ use finitely many parameters $\nu_i$, which use finitely many generators $y_s$, which use finitely many subscripts $s$. No such $s$ are of the form $0^n$ or $1^n$, so there exists $m$ such that $0^m1$ is independent from every such $s$. Now $y_{0^m1}$ is independent from every $\nu_i$, and the existence of the $D_i$ is clear. To see that this implies the result about $\lk(F)$, first note that any finite subcomplex of $\lk(F)$ lies in the link of $F$ in a union of finitely many clusters $C_1,\dots,C_k$ based at $F$. Let $U_C=C_1\cup\cdots\cup C_k$, so we are assuming that our finite subcomplex lies in $\lk_{U_C}(F)$. By the first result we can choose clusters $D_1,\dots,D_k$ based at $F$, say with union $U_D=D_1\cup\cdots\cup D_k$, such that $\lk_{U_D}(F)$ contains a cone with base $\lk_{U_C}(F)$ and cone point the vertex of $\lk(F)$ represented by $Fy_{0^m1}$. Since this cone is contractible, we are done.
\end{proof}

\section{BNSR-invariants and Morse theory}\label{sec:bnsr_morse}

In this section we discuss the relevant setup on BNSR-invariants and Bestvina--Brady discrete Morse theory.

\subsection{BNSR-invariants}\label{sec:bnsr}

The Bieri--Neumann--Strebel--Renz (BNSR) invariants $\Sigma^n(\Gamma)$ ($n\in\N$) of a type $\F_\infty$ group $\Gamma$ are a family of invariants that catalog the finiteness properties of subgroups of $\Gamma$ containing the commutator subgroup $[\Gamma,\Gamma]$. 
The invariant $\Sigma^1$ was introduced by Bieri, Neumann, and Strebel in \cite{bieri87}, and the higher invariants $\Sigma^n$ were introduced by Bieri and Renz in \cite{bieri88}.

The definition of $\Sigma^n(\Gamma)$ is a bit involved. First, consider the euclidean space $\Hom(\Gamma,\R)$ of characters $\chi$ of $\Gamma$, and call two characters \emph{equivalent} if they are positive scalar multiples of each other. The equivalence classes $[\chi]$ of non-trivial characters, called \emph{character classes}, form the \emph{character sphere} $\Sigma(\Gamma)$. If $\Hom(\Gamma,\R)$ is $d$ dimensional then $\Sigma(\Gamma)$ is homeomorphic to a $(d-1)$-sphere. The invariants $\Sigma^n(\Gamma)$ are certain subsets of $\Sigma(\Gamma)$, satisfying
\[
\Sigma(\Gamma)\supseteq \Sigma^1(\Gamma)\supseteq \Sigma^2(\Gamma)\supseteq\cdots \text{.}
\]
Before defining the $\Sigma^n(\Gamma)$, let us point out their main application:

\begin{theorem}\cite[Theorem~1.1]{bieri10}\label{thrm:bnsr_fin_props}
For $\Gamma$ a group of type $\F_n$ and $\Gamma'$ a subgroup of $\Gamma$ containing the commutator subgroup $[\Gamma,\Gamma]$, we have that $\Gamma'$ is of type $\F_n$ if and only if $[\chi]\in\Sigma^n(\Gamma)$ for all $\chi$ satisfying $\chi(\Gamma')=0$.
\end{theorem}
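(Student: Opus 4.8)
This statement is the Bieri--Renz ``$\Sigma^n$-criterion'', and in the present paper it is invoked rather than reproved; the reference \cite{bieri10}, restating work of Bieri--Renz \cite{bieri88}, supplies the argument. Were one to prove it from scratch, the plan would be as follows. Fix a $K(\Gamma,1)$ complex $Y$ with finite $n$-skeleton, so that its universal cover $\widetilde Y$ is contractible, carries a free cellular $\Gamma$-action, and has $\Gamma$-cocompact $n$-skeleton. Set $Q\defeq\Gamma/\Gamma'$; since $[\Gamma,\Gamma]\le\Gamma'$ the group $Q$ is abelian, and since $\Gamma$ is finitely generated $Q$ is a finitely generated abelian group. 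After the standard reduction (passing to the preimage of the torsion-free part, which does not affect being of type $\F_n$) we may assume $Q\cong\Z^k$, acting freely and cellularly on $Y'\defeq\widetilde Y/\Gamma'$ with $Q$-cocompact $n$-skeleton.

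The first reduction is Brown's finiteness criterion for type $\F_n$: $\Gamma'$ is of type $\F_n$ if and only if some filtration of the contractible free $\Gamma'$-complex $\widetilde Y$ by $\Gamma'$-cocompact subcomplexes is essentially $(n-1)$-connected (each stage mapping by a $\pi_i$-trivial map, $i\le n-1$, into some later stage). One produces such a filtration by pulling back along $\widetilde Y\to Y'$ the filtration of $Y'$ by the $B_m$-translates of a finite fundamental domain, where $B_m\subseteq\Z^k$ is the ball of radius $m$ for a fixed word norm. On the other hand, a character $\chi$ of $\Gamma$ with $\chi(\Gamma')=0$ is precisely one factoring as $\chi=\bar\chi\circ\pi$ with $\pi\colon\Gamma\to Q$ the quotient and $\bar\chi\in\Hom(Q,\R)\cong\R^k$, and by definition $[\chi]\in\Sigma^n(\Gamma)$ asserts that the superlevel (``half-space'') filtration of $\widetilde Y$ determined by $\chi$ is essentially $(n-1)$-connected. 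Thus the theorem reduces to: the ball filtration of $\widetilde Y$ is essentially $(n-1)$-connected if and only if every half-space filtration coming from a direction $[\bar\chi]\in S(Q,\R)\cong S^{k-1}$ is.

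The implication $(\Rightarrow)$ is the easy direction: restricting the essential connectivity of the ball filtration along a fixed linear functional, and comparing a ball $B_m$ with a half-space contained in it, yields the $\Sigma^n$ condition for that direction. The substance, and the main obstacle, is $(\Leftarrow)$: one must \emph{patch} the Morse-theoretic compression maps associated with the individual directions into a single compression of the ball filtration. The key geometric point is that $S^{k-1}$ is compact and $\Sigma^n(\Gamma)$ is an open subset of the character sphere, so if every direction orthogonal to $\Gamma'$ lies in $\Sigma^n(\Gamma)$, then finitely many open half-spaces, equipped with a common shift constant controlling how far ``downward'' one must retract, cover all the directions in which $B_m$ fails to absorb a given larger ball; a bounded composite of such retractions then compresses the larger ball into $B_m$ up to dimension $n-1$. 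Making this precise requires uniform control of the connectivity radii as the direction varies across the sphere (a compactness/semicontinuity argument for the invariant itself), together with the bookkeeping of the reductions to $Q\cong\Z^k$ and of dense versus rational directions; these uniformity and patching estimates are the genuinely delicate part, and are exactly what is carried out in detail in \cite{bieri88,bieri10}.
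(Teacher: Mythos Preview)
Your assessment is correct: the paper does not prove this theorem at all, it merely cites it from \cite{bieri10} (ultimately from \cite{bieri88}) and uses it as a black box. Your sketch of the underlying argument is a faithful outline of the standard proof in the literature, correctly identifying the reduction to a free $\Z^k$-action on a cocompact skeleton, Brown's criterion, and the crucial compactness/openness argument on the character sphere that patches the half-space data into ball-filtration connectivity.
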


For example if $\Gamma'$ is the kernel of a character $\chi$ whose image is a copy of $\Z$, called a \emph{discrete character}, then $\Gamma'$ is of type $\F_n$ if and only if $[\pm\chi]\in\Sigma^n(\Gamma)$.
Now we state the technical definition of the BNSR-invariants. (There are multiple equivalent ways to define them; for our purposes this will be the most useful, coming from \cite[Definition~1.1]{zaremsky17PB_n}.)

\begin{definition}[BNSR-invariants]\label{def:bnsr}
Let $\Gamma$ be a group of type $\F_n$, and let $\chi\in\Hom(\Gamma,\R)$ be a character. Let $Y$ be an $(n-1)$-connected CW complex on which $\Gamma$ acts cocompactly and with cell stabilizers contained in $\ker(\chi)$. There exists a map $h_\chi\colon Y\to\R$ satisfying
\[
h_\chi(g.y)=\chi(g)+h_{\chi}(y)
\]
for all $g\in \Gamma$ and $y\in Y$. For $t\in\R$ let $Y_{\chi\ge t}$ be the full subcomplex of $Y$ supported on those vertices $y$ with $\chi(y)\ge t$. The $n$th \emph{Bieri--Neumann--Strebel--Renz (BNSR) invariant} $\Sigma^n(\Gamma)$ is the subset of $\Sigma(\Gamma)$ consisting of all those $[\chi]$ for which the filtration $(Y_{\chi\ge t})_{t\in\R}$ is essentially $(n-1)$-connected, meaning that for all $t$ there exists $s\le t$ such that the inclusion $Y_{\chi\ge t}\to Y_{\chi\ge s}$ induces the trivial map in all homotopy groups $\pi_k$ for $k\le n-1$.
\end{definition}

As a remark, such a $Y$ and $h_\chi$ always exist and the definition is independent of the choices of $Y$ and $h_\chi$ (see \cite[Remark~8.2]{bux04} and \cite[Theorem~12.1]{bieri03}). This defines $\Sigma^n(\Gamma)$, and for $\Gamma$ of type $\F_\infty$ we define $\Sigma^\infty(\Gamma)$ to be the intersection of all the $\Sigma^n(\Gamma)$.

Let us now give a sufficient condition for a character class to lie in $\Sigma^\infty(\Gamma)$, which will be useful for our purposes since it does not require the action of $\Gamma$ on $Y$ to be cocompact, just cocompact on skeleta.

\begin{lemma}\label{lem:bnsr_non_cocpt}
Let $\Gamma$ be a group of type $\F_\infty$. Let $Y$ be a contractible CW complex on which $\Gamma$ acts with finitely many orbits of cells in each dimension and with cell stabilizers lying in $\ker(\chi)$. Let $h_\chi$ and $Y_{\chi\ge t}$ be as in Definition~\ref{def:bnsr}. If for all $s\le t$ the inclusion $Y_{\chi\ge t}\to Y_{\chi\ge s}$ is a homotopy equivalence, then $[\chi]\in\Sigma^\infty(\Gamma)$.
\end{lemma}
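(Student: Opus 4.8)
\emph{Proof proposal.} The plan is to deduce the statement from Definition~\ref{def:bnsr} by passing to skeleta of $Y$, which is exactly the device needed to circumvent the failure of cocompactness. Fix $n\ge 1$; I will show $[\chi]\in\Sigma^n(\Gamma)$, and since $n$ is arbitrary this yields $[\chi]\in\Sigma^\infty(\Gamma)=\bigcap_m\Sigma^m(\Gamma)$. The complex $Y^{(n)}$ is $(n-1)$-connected, since the inclusion $Y^{(n)}\hookrightarrow Y$ induces isomorphisms on $\pi_k$ for $k\le n-1$ and $Y$ is contractible; the action of $\Gamma$ on $Y^{(n)}$ is cocompact, because $Y^{(n)}$ has finitely many $\Gamma$-orbits of cells (a finite union over the dimensions $0,\dots,n$ of finite orbit sets); cell stabilizers still lie in $\ker(\chi)$; and $h_\chi$ restricts to a map $Y^{(n)}\to\R$ with the required equivariance. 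Thus $Y^{(n)}$ is a legitimate choice of complex for computing membership in $\Sigma^n(\Gamma)$, and the relevant filtration is $\big(Y^{(n)}_{\chi\ge t}\big)_{t\in\R}$, where $Y^{(n)}_{\chi\ge t}=(Y_{\chi\ge t})^{(n)}$ (both are the subcomplex consisting of cells of dimension $\le n$ all of whose vertices $y$ satisfy $h_\chi(y)\ge t$).

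The heart of the matter is the observation that every sublevel complex $Y_{\chi\ge t}$ is in fact contractible. To see this, fix $t_0\in\R$ and consider the ascending chain of subcomplexes $Y_{\chi\ge t_0}\subseteq Y_{\chi\ge t_0-1}\subseteq Y_{\chi\ge t_0-2}\subseteq\cdots$, whose union is all of $Y$. By hypothesis each inclusion in this chain is a homotopy equivalence. Since homotopy groups commute with the colimit of an ascending chain of CW subcomplexes (a sphere, or a nullhomotopy of a sphere, has compact image and hence lands in some term of the chain), we obtain $\pi_k(Y_{\chi\ge t_0})\cong\varinjlim_m\pi_k(Y_{\chi\ge t_0-m})\cong\pi_k(Y)=0$ for every $k$, the middle isomorphism holding because all bonding maps are isomorphisms. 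A weakly contractible CW complex is contractible, so $Y_{\chi\ge t_0}$ is contractible, and $t_0$ was arbitrary.

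Consequently $Y^{(n)}_{\chi\ge t}=(Y_{\chi\ge t})^{(n)}$, being the $n$-skeleton of a contractible complex, is $(n-1)$-connected for every $t$. Hence the filtration $\big(Y^{(n)}_{\chi\ge t}\big)_{t\in\R}$ is essentially $(n-1)$-connected in the sense of Definition~\ref{def:bnsr}: for each $t$ one may take $s=t$, and the identity map of an $(n-1)$-connected complex is trivial on $\pi_k$ for all $k\le n-1$. Therefore $[\chi]\in\Sigma^n(\Gamma)$, which completes the argument. I do not expect a genuinely hard step here; the only points requiring care are the two pieces of bookkeeping above --- checking that $Y^{(n)}$ still satisfies the hypotheses of Definition~\ref{def:bnsr}, and the compactness argument that lets homotopy groups pass through the ascending union --- together with the conceptual observation that the strong downward hypothesis on the $Y_{\chi\ge t}$, combined with contractibility of $Y$, forces each $Y_{\chi\ge t}$ to be contractible outright.
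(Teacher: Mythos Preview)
Your proposal is correct and follows essentially the same approach as the paper: pass to the $n$-skeleton to obtain a cocompact action satisfying the hypotheses of Definition~\ref{def:bnsr}, then use compactness of spheres together with the hypothesis on the inclusions $Y_{\chi\ge t}\to Y_{\chi\ge s}$ to deduce that each $Y_{\chi\ge t}$ has trivial homotopy groups. The only cosmetic difference is that you conclude $(n-1)$-connectedness of $(Y_{\chi\ge t})^{(n)}$ directly from contractibility of $Y_{\chi\ge t}$, whereas the paper first reduces essential $(n-1)$-connectedness of the skeletal filtration to that of the full filtration and then argues $\pi_k(Y_{\chi\ge t})=0$; the underlying ideas are identical.
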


\begin{proof}
We need to show that $[\chi]\in\Sigma^n(\Gamma)$ for arbitrary $n\in\N$. Since $Y$ is contractible its $n$-skeleton is $(n-1)$-connected, and the action of $\Gamma$ on $Y^{(n)}$ is cocompact, so by Definition~\ref{def:bnsr} $[\chi]\in\Sigma^n(\Gamma)$ if and only if the filtration $(Y_{\chi\ge t}^{(n)})_{t\in\R}$ is essentially $(n-1)$-connected. Since a cellular map between CW complexes induces the trivial map in $\pi_k$ if and only if this is true of the restricted map between the complexes' $(k+1)$-skeleta, we see that $[\chi]\in\Sigma^n(\Gamma)$ if and only if the filtration $(Y_{\chi\ge t})_{t\in\R}$ is essentially $(n-1)$-connected. Since $Y$ is contractible, the inclusion $Y_{\chi\ge t}\to Y$ induces the trivial map in $\pi_k$ for all $k$. Since every element of $\pi_k(Y_{\chi\ge t})$ maps to $0$ in $\pi_k(Y)$ and since homotopy spheres are compact, every such element must already map to $0$ in $\pi_k(Y_{\chi\ge s})$ for some $s\le t$, which since the inclusions $Y_{\chi\ge t}\to Y_{\chi\ge s}$ are all homotopy equivalences implies that in fact $\pi_k(Y_{\chi\ge t})=0$ for all $k$ and $t$. In particular the filtration $(Y_{\chi\ge t})_{t\in\R}$ is essentially $(n-1)$-connected (for trivial reasons).
\end{proof}

\subsection{Morse theory}\label{sec:morse}

Bestvina--Brady discrete Morse theory, introduced in \cite{bestvina97}, is a useful topological tool that reduces difficult problems to easier ones. For example in Lemma~\ref{lem:bnsr_non_cocpt} we would like the inclusion $Y_{\chi\ge t}\to Y_{\chi\ge s}$ to be a homotopy equivalence, which is a difficult ``global'' problem, but we can use Morse theory to reduce this to the easier ``local'' problem of checking that certain subcomplexes of links are contractible.

Let us begin to set up our Morse theoretic framework. The type of complex we work with is an affine cell complex, that is a (regular) CW complex whose cells are copies of convex polyhedra in euclidean space, glued together along faces. For example this includes simplicial complexes, cube complexes, and (thanks to Remark~\ref{rmk:cluster_cvx}) cluster complexes. Affine cell complexes come with the usual notion of the \emph{link} of a vertex, which is the space of directions out of the vertex (as has already been defined for cluster complexes).

A nice feature of affine cell complexes is that, once we understand the topology of the link of a vertex, we can understand the topological relationship between such a complex and the full subcomplex supported on all vertices but that one, as we now explain.

\begin{lemma}[Add one vertex]\label{lem:one_vtx}
Let $Y$ be an affine cell complex. Let $y$ be a vertex of $Y$, and let $Z$ be the full subcomplex of $Y$ supported on all vertices of $Y$ except $y$. If the link of $y$ in $Y$ is $(n-1)$-connected then the inclusion $Z\to Y$ induces an isomorphism in $\pi_k$ for all $k\le n-1$ and a surjection in $\pi_n$.
\end{lemma}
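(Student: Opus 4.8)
The plan is to obtain $Y$ from $Z$ by gluing in the open star of $y$, and to analyze this gluing via a standard mapping-cone/homotopy-pushout argument. Concretely, let $\st(y)$ denote the closed star of $y$ in $Y$, i.e.\ the union of all closed cells containing $y$; since $Y$ is a regular affine cell complex, $\st(y)$ is the cone on $\lk(y)$ with cone point $y$, and in particular it is contractible. The intersection $\st(y)\cap Z$ is exactly $\lk(y)$: a cell of $\st(y)$ either contains $y$ (so it is not a cell of $Z$) or it lies in the boundary ``opposite'' to $y$, and these opposite faces are precisely the cells of $\lk(y)$. Finally $Z\cup\st(y)=Y$, because every cell of $Y$ either contains $y$, hence lies in $\st(y)$, or does not, hence lies in $Z$. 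Since $Z$ and $\st(y)$ are subcomplexes whose union is $Y$ and whose intersection is $\lk(y)$, we have a pushout square of cofibrations
\[
\begin{tikzcd}
\lk(y) \arrow[r, hook] \arrow[d, hook] & \st(y) \arrow[d, hook]\\
Z \arrow[r, hook] & Y
\end{tikzcd}
\]
which, because all maps are inclusions of subcomplexes (hence cofibrations), is also a homotopy pushout.

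Next I would feed this homotopy pushout into the Blakers--Massey theorem, or more simply into the elementary fact that a homotopy pushout along a cofibration can be built by attaching cells: since $\st(y)$ is contractible, $Y$ is homotopy equivalent to the homotopy cofiber of $\lk(y)\hookrightarrow Z$, i.e.\ to $Z$ with a cone on $\lk(y)$ attached along the inclusion $\lk(y)\to Z$. Attaching a cone on an $(n-1)$-connected space $\lk(y)$ to $Z$ does not change $\pi_k$ for $k\le n-1$ and can only kill elements of $\pi_n$ (the new $n$-cells of the cone are attached along maps that, up to homotopy, factor through the $(n-1)$-connected space $\lk(y)$, so they contribute no new generators in dimension $n$ and impose no new relations below dimension $n$). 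This gives exactly the conclusion: $\pi_k(Z)\to\pi_k(Y)$ is an isomorphism for $k\le n-1$ and a surjection for $k=n$. One should be slightly careful about basepoints and connectivity of $\lk(y)$ when $n=1$ (where $(n-1)$-connected means nonempty and path-connected), and about $\lk(y)$ possibly being empty when $n=0$, but these degenerate cases are easily checked directly.

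An alternative, perhaps more self-contained route avoids homotopy-pushout language entirely: build $Y$ from $Z$ by adjoining the open cells of $\st(y)$ in order of increasing dimension. Each open cell $\sigma$ containing $y$ has as its attaching sphere a subcomplex of $\st(y)$, and one checks that attaching these cells amounts to coning off the finitely-many (or, in the non-locally-finite case, the whole) subcomplex $\lk(y)$. Then one invokes the cellular approximation theorem: since $\lk(y)$ is $(n-1)$-connected, any attaching map of a cell of dimension $\le n$ is null-homotopic in $Z$, so attaching cells of dimension $\le n$ along such maps affects $\pi_k$ only as described, and attaching cells of dimension $>n$ affects no $\pi_k$ for $k\le n$ at all.

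The main obstacle is purely bookkeeping: verifying cleanly that $\st(y)\cap Z=\lk(y)$ and that $\st(y)$ is a cone on $\lk(y)$, using regularity of the affine cell complex (so that each closed cell is an embedded ball and $\lk(y)$ genuinely embeds as the ``link'' subcomplex rather than merely mapping in). This is where the hypothesis that $Y$ is an affine cell complex — in particular a regular CW complex — is essential; for a non-regular CW complex the star need not be a cone and the statement can fail. Once that identification is in hand, the homotopy-theoretic input is entirely standard, so I would state it crisply and cite a reference (e.g.\ the Bestvina--Brady original \cite{bestvina97}, which proves essentially this lemma) rather than reprove the cofiber/connectivity estimate from scratch.
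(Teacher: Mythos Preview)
Your proposal is correct and essentially the same as the paper's proof: both decompose $Y$ as (something homotopy equivalent to) $Z$ glued to a contractible neighborhood of $y$ along (something homotopy equivalent to) $\lk(y)$, then invoke standard tools---the paper uses the open pieces $Y\setminus\{y\}$ and a neighborhood of $y$ and cites Seifert--van~Kampen, Mayer--Vietoris, and Hurewicz, while you use the closed subcomplexes $Z$ and $\st(y)$ and phrase things via homotopy pushouts. One small point: for non-simplicial cells, $\st(y)\cap Z$ is not literally $\lk(y)$ (the cells of the link are vertex figures, not ``opposite faces'') but is homeomorphic to it via radial projection from $y$, which is exactly the deformation retraction the paper uses to pass from $Y\setminus\{y\}$ to $Z$.
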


\begin{proof}
Since $Y$ is an affine cell complex there is a strong deformation retraction from $Y\setminus\{y\}$ to $Z$, given by, on each cell containing $y$, radially projecting away from $y$ to the boundary of the cell. Hence it suffices to show that the inclusion $Y\setminus\{y\} \to Y$ induces such maps. Note that $Y$ is the union of $Y\setminus\{y\}$ with a contractible neighborhood of $y$, and the intersection of these factors is homotopy equivalent to the link of $y$ in $Y$. The result now follows from standard topological tools, e.g., Seifert--van Kampen, Mayer--Vietoris, and Hurewicz.
\end{proof}

Now we define the notion of a Morse function on an affine cell complex. The original definition from \cite{bestvina97} is not general enough for our purposes, but numerous generalizations have been made in the literature. For us it will be most convenient to use a variation of the notion of ``Morse function'' from \cite[Definition~2.1]{zaremsky17PB_n}. Our definition here is slightly different, which we will spell out afterward.

\begin{definition}[Morse function]\label{def:morse_function}
Let $Y$ be an affine cell complex and let $h,f\colon Y^{(0)}\to \R$ be a pair of real valued functions on the vertex set of $Y$. Consider the function $(h,f)\colon Y^{(0)}\to \R\times \R$ with the codomain ordered via the usual lexicographic order. We call $(h,f)$ a \emph{Morse function} provided the following conditions hold:
\begin{enumerate}
\item Every cell has a unique vertex at which $(h,f)$ achieves a minimum value.
\item There exists $\varepsilon>0$ such that for any pair of adjacent vertices $y$ and $z$, either $|h(y)-h(z)|\ge\varepsilon$ or $h(y)=h(z)$.
\item The set $f(Y^{(0)})\subseteq \R$ is inverse well ordered, i.e., every non-empty subset of $f(Y^{(0)})$ has a maximal element.
\end{enumerate}
\end{definition}

The main difference between this definition and \cite[Definition~2.1]{zaremsky17PB_n} is that there, $h$ and $f$ were defined on all of $Y$ and required to be affine on cells and non-constant on edges. This is a sufficient condition for every cell to have a unique vertex at which $(h,f)$ achieves a minimum value, but it is not necessary. Also, we should remark that in \cite[Definition~2.1]{zaremsky17PB_n} this was called an ``ascending-type Morse function'', and a ``descending'' version involved requiring a well order instead of an inverse well order. We will only need the ascending version here, so we just call this a ``Morse function''.

The ``certain subcomplexes of links'' referenced above as being important are the following:

\begin{definition}[Ascending link]
Let $(h,f)$ be a Morse function on an affine cell complex $Y$. The \emph{ascending star} $\asst y$ of a vertex $y$ is the subcomplex of $Y$ consisting of all cells on which $(h,f)$ achieves its minimum at $y$, and their faces. The \emph{ascending link} $\alk y$ of $y$ is the link of $y$ in $\asst y$.
\end{definition}

Next we state the all-important Morse Lemma, which is Lemma~2.3 in \cite{zaremsky17PB_n}. Since our definition of Morse function has technically not appeared in the literature before, let us re-prove the Morse Lemma using this definition, but the proof is essentially identical to that of \cite[Lemma~2.3]{zaremsky17PB_n}.

\begin{lemma}[Morse Lemma]\label{lem:morse}
Let $(h,f)$ be a Morse function on an affine cell complex $Y$. For $t\in\R\cup\{-\infty\}$ let $Y_{h\ge t}$ be the full subcomplex of $Y$ supported on those vertices $y$ with $h_\chi(y)\ge t$. Let $-\infty\le s<t$. If for all vertices $y$ with $s\le h(y)<t$ the ascending link $\alk y$ is $(n-1)$-connected, then the inclusion $Y_{h\ge t}\to Y_{h\ge s}$ induces an isomorphism in $\pi_k$ for $k\le n-1$ and a surjection in $\pi_n$.
\end{lemma}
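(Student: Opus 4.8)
The plan is to prove the Morse Lemma by filtering the passage from $Y_{h\ge t}$ to $Y_{h\ge s}$ one vertex at a time, in order of \emph{decreasing} $h$-value (and, among vertices sharing an $h$-value, in order of \emph{decreasing} $f$-value), and to apply the ``add one vertex'' Lemma~\ref{lem:one_vtx} at each step. First I would reduce to the case where the interval $[s,t)$ of relevant $h$-values is ``small'': by condition~(ii) of Definition~\ref{def:morse_function}, the set of $h$-values occurring on vertices is $\varepsilon$-separated away from equality, so the half-open interval $[s,t)$ meets the image $h(Y^{(0)})$ in a set that is order-isomorphic to a subset of $\Z$ (it could still be infinite, but it is discrete and bounded, hence one can peel off the $h$-values from the top one at a time); and for each fixed $h$-value $c$, condition~(iii) tells us that $f$ restricted to the vertices with $h=c$ takes values in an inverse-well-ordered set, so we may also peel those off one at a time starting from the $f$-maximal one. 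Composing isomorphisms-in-$\pi_k$-for-$k\le n-1$ and a surjection-in-$\pi_n$ across each step, and noting that a (possibly transfinite) composition of such maps is again such a map on compactly-supported homotopy classes, gives the conclusion. (If one wants to avoid transfinite bookkeeping, the standard trick is to observe that any map of a sphere $S^k$, $k\le n$, into $Y_{h\ge s}$ has compact image, hence lands in a subcomplex obtained from $Y_{h\ge t}$ by adding only finitely many vertices, so only finitely many one-vertex steps are ever relevant to a given homotopy class.)

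The key local input at each one-vertex step is this: suppose $y$ is a vertex with $s\le h(y)<t$ that is $(h,f)$-maximal among the vertices not yet added (i.e.\ we are adding $y$ to a subcomplex $Z$ to form $Z'=Z\cup\{y\}$, where $Z'$ is the full subcomplex on $Z^{(0)}\cup\{y\}$). I claim the link of $y$ in $Z'$ is exactly the ascending link $\alk y$. Indeed, a cell $\sigma$ containing $y$ lies in $Z'$ iff all of its \emph{other} vertices lie in $Z$, i.e.\ iff every vertex $z\ne y$ of $\sigma$ has $(h(z),f(z))$ strictly larger than $(h(y),f(y))$ in the lexicographic order (using that we are adding vertices top-down, so the vertices already in $Z$ are precisely those with strictly larger $(h,f)$-value than $y$, together with those lying outside the range, but the latter have $h$-value $\ge t > h(y)$ so again larger). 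By condition~(i), the cell $\sigma$ has a unique $(h,f)$-minimal vertex; the condition just stated says that this minimum is achieved at $y$. So the cells of $Z'$ containing $y$ are exactly the cells on which $(h,f)$ is minimized at $y$, which is the definition of $\asst y$, and hence $\lk_{Z'}(y)=\lk_{\asst y}(y)=\alk y$. By hypothesis $\alk y$ is $(n-1)$-connected, so Lemma~\ref{lem:one_vtx} applies and the inclusion $Z\to Z'$ induces an isomorphism in $\pi_k$ for $k\le n-1$ and a surjection in $\pi_n$.

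Assembling: $Y_{h\ge s}$ is (up to the compactness reduction) the increasing union of subcomplexes $Z_0\subseteq Z_1\subseteq\cdots$ with $Z_0=Y_{h\ge t}$, each $Z_{i+1}$ obtained from $Z_i$ by adding a single vertex $y_i$ with $s\le h(y_i)<t$ in top-down $(h,f)$-order, so that $\lk_{Z_{i+1}}(y_i)=\alk y_i$ is $(n-1)$-connected. Each inclusion $Z_i\to Z_{i+1}$ is a $\pi_k$-isomorphism for $k\le n-1$ and a $\pi_n$-surjection; composing finitely many of these (enough to capture any given compactly-supported class) shows $Y_{h\ge t}\to Y_{h\ge s}$ has the same property.

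I expect the only genuine subtlety — the ``main obstacle'' — to be the bookkeeping of the order in which vertices are added when $[s,t)$ contains infinitely many vertices (e.g.\ infinitely many $h$-values accumulating at $t$, or infinitely many $f$-values below the maximum at a single $h$-level). Conditions~(ii) and~(iii) are exactly what make this order type manageable: (ii) forces the $h$-values in $[s,t)$ to be discrete from above so they can be exhausted from the top, and (iii) forces the $f$-fibers to be inverse-well-ordered so they too can be exhausted from the top; combined with the observation that a sphere's image is compact and hence touches only finitely many of the added vertices, no actual transfinite induction is needed, only the finite composition of one-vertex steps. Everything else is the routine application of Lemma~\ref{lem:one_vtx} together with the identification $\lk_{Z'}(y)=\alk y$, which is immediate from condition~(i) of the definition of a Morse function.
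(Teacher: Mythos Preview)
Your overall strategy---filter from $Y_{h\ge t}$ to $Y_{h\ge s}$ by adding one vertex at a time, identify the relative link at each step with the ascending link, and apply Lemma~\ref{lem:one_vtx}---is exactly the paper's approach, and your identification $\lk_{Z'}(y)=\alk y$ (when $Z$ is the full subcomplex on \emph{all} vertices with strictly larger $(h,f)$-value) is correct and is the heart of the argument.

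There is, however, a genuine gap in how you invoke condition~(ii). That condition only says that \emph{adjacent} vertices have $h$-values either equal or at least $\varepsilon$ apart; it does \emph{not} say the image $h(Y^{(0)})$ is discrete. So your claim that ``$[s,t)$ meets $h(Y^{(0)})$ in a set order-isomorphic to a subset of $\Z$'' is unjustified, and in general the lexicographic $(h,f)$-order on the bad vertices need not be inverse well ordered---e.g.\ one can have vertices $w_m$ with $h(w_m)=1-1/m$, no two adjacent, so no ``top'' $h$-level to peel off. Your compactness fallback does not obviously rescue this: when you push a sphere off the $(h,f)$-minimal bad vertex $y$ into $Z=\{(h,f)>(h(y),f(y))\}$, the homotoped sphere may land on new bad vertices with slightly larger $(h,f)$-value, and iterating gives an increasing but non-terminating sequence of minima.

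The fix is precisely the reduction the paper makes first: assume without loss of generality that $t-s\le\varepsilon$. Then for any two \emph{adjacent} vertices $y,z$ both in the range $[s,t)$, condition~(ii) forces $h(y)=h(z)$ (since $|h(y)-h(z)|<t-s\le\varepsilon$). Hence on adjacent pairs in the range only $f$ distinguishes them, so one may order the bad vertices by any inverse well order $\prec$ refining the $f$-order (which exists by~(iii)) rather than by $(h,f)$ lexicographically; your link identification then goes through verbatim with $\prec$ in place of the lex order, and transfinite induction on $\prec$ finishes the step. The reduction to $t-s\le\varepsilon$ itself is where the compactness-of-spheres argument is legitimately used.
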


\begin{proof}
First, without loss of generality we assume that $t-s\le \varepsilon$, thanks to induction and compactness of homotopy spheres. Let $Y_{s\le h<t}$ be the full subcomplex of $Y$ supported on vertices $y$ with $s\le h(y)<t$, so we can build up from $Y_{h\ge t}$ to $Y_{h\ge s}$ by attaching, in some order, the vertices of $Y_{s\le h<t}$ along relative links. We need to choose the order to make the topology well behaved. To this end, pick the inverse well order $\prec$ on the vertex set of $Y_{s\le h<t}$ satisfying that $y\prec z$ whenever $f(y)<f(z)$. This is possible thanks to $f(Y^{(0)})\subseteq \R$ being inverse well ordered. The purpose of using this inverse well ordering is that, since every subset of ``new'' vertices has a maximum element under this order, there is always a well defined ``next'' vertex to add at every stage, allowing for transfinite induction.

Now we claim that the relative link of a vertex in $Y_{s\le h<t}$ with respect to this order equals its ascending link with respect to the Morse function $(h,f)$. The relative link of $y$ is the full subcomplex of the link of $y$ in $Y$ supported on those vertices $z$ with either $h(z)\ge t$, or else $s\le h(z)<t$ and $y\prec z$. (Here if $z$ is a vertex of $Y$ adjacent to $y$, then we are also writing $z$ to denote the corresponding vertex of the link of $y$ in $Y$.) To see that this equals $\alk y$, first note that for any vertex $z$ adjacent to $y$, either $|h(y)-h(z)|\ge\varepsilon$ or $h(y)=h(z)$. If $|h(y)-h(z)|\ge\varepsilon$, then since $t-s\le \varepsilon$ we know $s\le h(z)<t$ does not hold, so $z$ is in the relative link of $y$ if and only if $h(z)\ge t$ if and only if $z$ is in the ascending link of $y$. If $h(y)=h(z)$ (so $f(y)\ne f(z)$) then $z$ is in the relative link of $y$ if and only if $y\prec z$, and $z$ is in the ascending link of $y$ if and only if $f(y)<f(z)$. But we constructed $\prec$ so that these are equivalent.

Now when we build up from $Y_{h\ge t}$ to $Y_{h\ge s}$ by attaching the vertices of $Y_{s\le h<t}$ according to the order $\prec$, at each stage the relative link is the ascending link, hence is $(n-1)$-connected by hypothesis. Thanks to Lemma~\ref{lem:one_vtx}, this implies that attaching any given vertex along its relative link induces an isomorphism in $\pi_k$ for $k\le n-1$ and a surjection in $\pi_n$. Since $\prec$ is an inverse well order, transfinite induction says that the same is true after attaching every missing vertex, which is to say the inclusion $Y_{h\ge t}\to Y_{h\ge s}$ induces an isomorphism in $\pi_k$ for $k\le n-1$ and a surjection in $\pi_n$.
\end{proof}

\begin{corollary}\label{cor:morse}
With the same setup as the Morse Lemma~\ref{lem:morse}, if for all vertices $y$ with $s\le h(y)<t$ the ascending link $\alk y$ is contractible, then the inclusion $Y_{h\ge t}\to Y_{h\ge s}$ is a homotopy equivalence.
\end{corollary}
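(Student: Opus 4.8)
The plan is to deduce Corollary~\ref{cor:morse} from the Morse Lemma (Lemma~\ref{lem:morse}) by pushing the connectivity hypothesis to infinity and then invoking Whitehead's theorem. The starting observation is that a contractible space is $(n-1)$-connected for \emph{every} $n\in\N$, with the usual convention that ``$(-1)$-connected'' means non-empty. Hence the hypothesis of the corollary---that $\alk y$ is contractible for every vertex $y$ with $s\le h(y)<t$---implies the hypothesis of Lemma~\ref{lem:morse} for every value of $n$ simultaneously.

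Applying the Morse Lemma once for each $n$, the inclusion $Y_{h\ge t}\to Y_{h\ge s}$ induces an isomorphism on $\pi_k$ for all $k\le n-1$ and a surjection on $\pi_n$. Letting $n$ range over all of $\N$ (and reading off the $n=1$ case for $\pi_0$), this says exactly that the inclusion induces an isomorphism on $\pi_k$ for every $k\ge 0$; that is, it is a weak homotopy equivalence. Since $Y$ is an affine cell complex, both $Y_{h\ge t}$ and $Y_{h\ge s}$ are full subcomplexes, hence honest CW complexes, and the inclusion is cellular, so Whitehead's theorem upgrades the weak homotopy equivalence to a genuine homotopy equivalence. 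That is the assertion.

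I do not expect a real obstacle here. One could instead re-run the proof of Lemma~\ref{lem:morse} directly, noting that attaching a single vertex along a contractible ascending link is already a homotopy equivalence by Lemma~\ref{lem:one_vtx} together with the gluing lemma, and then conclude by transfinite induction; but this route requires some care about transfinite compositions of honest (as opposed to weak) homotopy equivalences, whereas the Whitehead argument is immediate. The only minor points to verify are the bookkeeping for $\pi_0$ and basepoints---which is already built into the statement of the Morse Lemma---and the fact that the relevant subcomplexes are CW complexes so that Whitehead's theorem applies, which is automatic for full subcomplexes of an affine cell complex.
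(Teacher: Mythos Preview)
Your proposal is correct and follows essentially the same route as the paper: apply the Morse Lemma for every $n$ to get isomorphisms on all homotopy groups, then invoke Whitehead's Theorem (using that the superlevel sets are CW complexes). The paper's proof is just the one-line version of what you wrote.
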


\begin{proof}
By the Morse Lemma~\ref{lem:morse} the inclusion induces an isomorphism in $\pi_k$ for all $k$, and so it is a homotopy equivalence by Whitehead's Theorem.
\end{proof}

\section{The BNSR-invariants of the Lodha--Moore groups}\label{sec:main}

In this section we prove our main results, Theorem~\ref{thrm:main} and Corollary~\ref{cor:main_cor}, by applying Morse theory to the cluster complex $X$.

First let us recall the partial computation of $\Sigma^n(G)$ from \cite{zaremsky16}. We have $\Sigma^1(G)=\Sigma(G)\setminus\{[\chi_0],[\chi_1]\}$, and $\Sigma^2(G)$ equals $\Sigma^1(G)$ with the convex hull of $[\chi_0]$ and $[\chi_1]$ removed, i.e., we remove any character class of the form $[a\chi_0+b\chi_1]$ for $a,b\ge 0$. From Lemma~5.8 of \cite{zaremsky16} one can see that as soon as $[\pm\psi]\in\Sigma^\infty(G)$ we will have that $\Sigma^n(G)=\Sigma^2(G)$ for all $n\ge 2$. Combining this with \cite[Observation~5.6]{zaremsky16} gives $\Sigma^n(H)=\Sigma^2(H)$ for all $n\ge 2$ for all Lodha--Moore groups $H$.

Thus our main task is to prove that $[\pm\psi]\in\Sigma^\infty(G)$.

The homomorphism $\psi\colon G\to\Z$ induces a function $h_\psi\colon X^{(0)}\to\Z$ via $h_\psi(Fg)\defeq\psi(g)$. This is well defined since $\psi(F)=0$ and $\psi$ is a homomorphism. Now let $f\colon X^{(0)}\to -\N$ be an arbitrary injection from $X^{(0)}$ to the set of negative integers $-\N$ (since $G$ is finitely generated it is countable, and hence so is $X^{(0)}=F\backslash G$).

\begin{lemma}\label{lem:psi_morse}
Viewed as a function from $X^{(0)}$ to $\R\times\R$, $(h_\psi,f)$ is a Morse function (in the sense of Definition~\ref{def:morse_function}).
\end{lemma}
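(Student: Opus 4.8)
The plan is to verify the three conditions in Definition~\ref{def:morse_function} for the pair $(h_\psi, f)$, taking them roughly in order of difficulty. Condition (iii) is immediate: $f$ takes values in $-\N\subseteq\R$, and every non-empty subset of $-\N$ has a maximal element, so $f(X^{(0)})$ is inverse well ordered. Condition (ii) is also easy: $h_\psi$ takes values in $\Z$, so for adjacent vertices $y,z$ we either have $h_\psi(y)=h_\psi(z)$ or $|h_\psi(y)-h_\psi(z)|\ge 1$, and we may take $\varepsilon=1$.

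The substance of the argument is condition (i): every cell of $X$ must have a unique vertex at which $(h_\psi,f)$ achieves its minimum value. Since $f$ is injective, uniqueness of the minimum is automatic as soon as we know the minimum exists (two vertices with the same $(h_\psi,f)$-value would have to share an $f$-value, forcing them to be equal). So the real content is existence, i.e., that $h_\psi$ alone achieves its minimum over the vertices of a cell at a single vertex; then among any ties in $h_\psi$, the injective $f$ breaks them and picks out one vertex, which is automatically the unique $(h_\psi,f)$-minimum. For this I would reduce to a single cluster: a cell of $X$ lies in some cluster $C$ based at a vertex, which by $G$-transitivity we may take to be $F$, with parameters given by independent special forms $\nu_1,\dots,\nu_k$, and whose vertices are the cosets $F\nu_{i_1}\cdots\nu_{i_p}\cdot 1$ for subsets $\{i_1,\dots,i_p\}\subseteq\{1,\dots,k\}$ (a sub-collection, since a cell of the cluster is spanned by a sub-cube's worth of vertices). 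Now $h_\psi(F\nu_{i_1}\cdots\nu_{i_p})=\psi(\nu_{i_1})+\cdots+\psi(\nu_{i_p})$, since $\psi$ is a homomorphism. Writing $\psi(\nu_i)\eqdef a_i\in\Z$ (note each $\nu_i$ is a special form $y_{s_1}^{\varepsilon_1}\cdots y_{s_\ell}^{\varepsilon_\ell}$ with alternating signs, so $a_i\in\{-1,0,1\}$), the value of $h_\psi$ at the vertex indexed by a subset $I$ is $\sum_{i\in I}a_i$, and this affine-linear function of the subset attains its minimum exactly on those $I$ contained in $\{i : a_i\le 0\}$ and containing $\{i : a_i<0\}$ — in general a whole sub-face of possible minimizers, not a unique vertex. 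That is precisely why $f$ is needed: among this (non-empty) set of $h_\psi$-minimizing vertices of the cell, $f$ injective selects a single one, giving the unique minimum for $(h_\psi,f)$.

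The one point requiring a little care — and the main (mild) obstacle — is that the index set of a \emph{cell} of the cluster $C$ is not literally ``all subsets of $\{1,\dots,k\}$'': diagonal subclusters mean a cell can be, e.g., a simplex spanned by $F$, $F\nu_1$, $F\nu_1\nu_2$ rather than a square. But in every case the vertex set of a cell of $X$ is, by construction, of the form $\{F\nu_{i_1}\cdots\nu_{i_p}g : I=\{i_1,\dots,i_p\}\in\mathcal{I}\}$ for a suitable down-and-up closed family $\mathcal{I}$ of subsets (coming from the face-complex structure of an admissible arrangement restricted to $[0,1]^k$), and $h_\psi$ restricted to these vertices is still the restriction of the affine function $I\mapsto\sum_{i\in I}a_i$, which attains a minimum on the finite set $\mathcal{I}$. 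Hence a minimum exists on each cell, and injectivity of $f$ upgrades ``a minimum'' to ``a unique minimum'' for $(h_\psi,f)$. This verifies (i), and together with (ii) and (iii) shows $(h_\psi,f)$ is a Morse function.
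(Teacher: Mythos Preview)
Your verification of conditions (ii) and (iii) matches the paper exactly. Your argument for condition (i) is ultimately correct, but it is far more elaborate than necessary, and the paper's proof reveals why: since $f$ is injective, the function $(h_\psi,f)$ takes distinct values on distinct vertices, and since every cell of an affine cell complex has only finitely many vertices, the minimum over any cell exists and is unique. That is the entire argument; no analysis of $h_\psi$, of clusters, of the values $a_i=\psi(\nu_i)$, or of the combinatorics of subsets $I\subseteq\{1,\dots,k\}$ is needed.

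Your exposition also contains a confusing sentence: you write that ``the real content is existence, i.e., that $h_\psi$ alone achieves its minimum over the vertices of a cell at a single vertex'', and then immediately discuss ties in $h_\psi$ being broken by $f$. These two clauses contradict each other, and in any case neither is ``the real content'': existence of a minimum is automatic on a finite set, and uniqueness is automatic from injectivity of $f$. The detour through the affine function $I\mapsto\sum_{i\in I}a_i$ and the structure of cells in admissible arrangements, while not wrong, buys nothing here. The paper dispatches condition (i) in a single sentence.
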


\begin{proof}
Since $f$ is injective, distinct vertices have distinct $(h_\psi,f)$ values, and so every cell has a unique vertex with minimum $(h,f)$ value. Since the image of $h_\psi$ is $\Z$, the condition involving $\varepsilon$ holds using $\varepsilon=1$. Since the image of $f$ is $-\N$, it is inverse well ordered.
\end{proof}

\begin{theorem}\label{thrm:main}
For any Lodha--Moore group $H$ and any $n\ge 2$ we have $\Sigma^n(H)=\Sigma^2(H)$.
\end{theorem}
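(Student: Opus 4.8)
The only thing left to prove is that $[\pm\psi]\in\Sigma^\infty(G)$; as recalled above this gives $\Sigma^n(G)=\Sigma^2(G)$ for all $n\ge 2$, and hence $\Sigma^n(H)=\Sigma^2(H)$ for every Lodha--Moore group $H$, by \cite{zaremsky16}. My plan is to run Bestvina--Brady discrete Morse theory on the cluster complex $X$, using $h_\psi$ together with the auxiliary injection $f$. By Theorem~\ref{thrm:mainlodha14}, $X$ is contractible and $G$ acts on it with finitely many orbits of cells in each dimension, so Lemma~\ref{lem:bnsr_non_cocpt} will apply provided $\psi$ vanishes on every cell stabilizer. This is automatic: if $g$ stabilizes a cell $C$, then $g$ permutes the finitely many, say $N$, vertices of $C$, and since $h_\psi(y\cdot g)=h_\psi(y)+\psi(g)$ for every vertex $y$ (because $G$ acts on $X^{(0)}=F\backslash G$ by right translation and $\psi(F)=0$), summing over the $N$ vertices of $C$ gives $N\psi(g)=0$, whence $\psi(g)=0$. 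The same holds with $\psi$ replaced by $-\psi$, and $(-h_\psi,f)$ is a Morse function by the same reasoning as in Lemma~\ref{lem:psi_morse}.

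Granting this, Lemma~\ref{lem:bnsr_non_cocpt} reduces the problem to proving that the inclusions $X_{\psi\ge t}\hookrightarrow X_{\psi\ge s}$ for $s\le t$, and $X_{\psi\le t}\hookrightarrow X_{\psi\le s}$ for $t\le s$, are homotopy equivalences; these are precisely the superlevel filtrations attached to $\psi$ and to $-\psi$. Applying Corollary~\ref{cor:morse} to the Morse function $(h_\psi,f)$ for the first family, and to $(-h_\psi,f)$ for the second, I would further reduce matters to the purely local assertion that, for every vertex $v$ of $X$, both the ascending link $\alk v$ with respect to $(h_\psi,f)$ and the descending link (the ascending link with respect to $(-h_\psi,f)$) are contractible.

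The local assertion is the crux, and I would prove it by a variant of the coning argument of Lemma~\ref{lem:cone}, arranged so that the cone lies entirely inside the ascending (respectively descending) star. Given $v$ and a finite subcomplex $K$ of $\alk v$, the cells of $K$ are directions out of $v$ into finitely many cells of $X$, each a subcluster of some top-dimensional cluster; altogether these top-dimensional clusters use only finitely many parameters, hence only finitely many subscripts, none of the form $0^n$ or $1^n$. Choosing $m$ larger than all of their lengths makes $0^m1$ independent from, and not consecutive (on either side) with, every one of them. Now adjoin $y_{0^m1}$ as an additional parameter to each of these finitely many ambient clusters: by the construction of $X$ recalled in Section~\ref{sec:our_complex} this yields honest cells of $X$ (since $y_{0^m1}$ is independent from the parameters already present), and these cells are prisms over the old ones --- no new diagonal subclusters arise, because $0^m1$ is not consecutive with any parameter in play. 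Traversing the new $[0,1]$-direction changes $h_\psi$ uniformly by $\psi(y_{0^m1})=1$, so the new edges are ascending and, crucially, the $(h_\psi,f)$-minimum of every enlarged cell (and of every subcluster of it obtained by intersecting with a flat not involving the new coordinate) is still attained at $v$; hence all these enlarged cells lie in the ascending star, and together they display $K$ as a subcomplex of a cone inside $\alk v$ with apex the new direction. As $K$ was arbitrary, $\alk v$ is contractible. The descending link is treated identically with $y_{0^m1}^{-1}$ (still a special form, still independent from all the subscripts involved) in place of $y_{0^m1}$: traversing the new direction now changes $h_\psi$ by $-1$, which is exactly what keeps $v$ the $(-h_\psi,f)$-minimum of the enlarged cells.

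I expect the main obstacle to lie in the bookkeeping of the previous paragraph rather than in any one idea. A cell $C$ of $\alk v$ need not be ``based at'' $v$ in the cluster sense --- $v$ is merely its Morse-minimal vertex --- so the enlargement must be carried out on an ambient top-dimensional cluster and then intersected back down with the flat cutting out $C$, and one must verify both that the result is a bona fide cell of $X$ with the expected prism structure and that $v$ persists as the Morse extremum after the enlargement. Once this is pinned down, the proof assembles routinely: contractibility of all ascending and descending links passes through Corollary~\ref{cor:morse} into Lemma~\ref{lem:bnsr_non_cocpt} to give $[\pm\psi]\in\Sigma^\infty(G)$, whereupon the reduction recalled above from \cite{zaremsky16} upgrades this to $\Sigma^n(H)=\Sigma^2(H)$ for every Lodha--Moore group $H$ and every $n\ge2$.
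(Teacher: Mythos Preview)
Your proposal is correct and follows essentially the same approach as the paper: verify the hypotheses of Lemma~\ref{lem:bnsr_non_cocpt}, apply the Morse function $(h_\psi,f)$ via Corollary~\ref{cor:morse}, and show ascending (and descending) links are contractible by the coning argument of Lemma~\ref{lem:cone} using the apex $Fy_{0^m1}^{\pm1}$. The only cosmetic differences are that you handle cell stabilizers by a direct averaging trick rather than the paper's finite-index argument, and you spell out the prism/non-consecutiveness structure (and work at a general vertex rather than reducing to $F$ via transitivity), whereas the paper leaves these implicit in its appeal to Lemma~\ref{lem:cone}.
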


\begin{proof}
By the results from \cite{zaremsky16} recalled above, it suffices to show that $[\pm\psi]\in\Sigma^\infty(G)$. We will show that $[\psi]\in\Sigma^\infty(G)$ and a parallel argument shows $[-\psi]\in\Sigma^\infty(G)$. We will first verify that $G$, $X$, and $\psi$ satisfy the hypotheses of Lemma~\ref{lem:bnsr_non_cocpt}. Recall from Theorem~\ref{thrm:mainlodha14} that $X$ is contractible, and that $X/G$ has finitely many cells in each dimension. Next, we wish to show that stabilizers in $G$ of cells in $X$ lie in the kernel of $\psi$. First note that every vertex stabilizer is a conjugate of $F$, and hence lies in $\ker(\psi)$. Also, any cell stabilizer contains as a finite index subgroup the intersection of the stabilizers of its vertices. This shows that any cell stabilizer has a finite index subgroup that lies in $\ker(\psi)$, and hence the stabilizer itself must lie in $\ker(\psi)$. This verifies the hypotheses of Lemma~\ref{lem:bnsr_non_cocpt}.

Now it suffices to show that for any $s\le t$ the inclusion $X_{\psi\ge t}\to X_{\psi\ge s}$ is a homotopy equivalence. By Lemma~\ref{lem:psi_morse}, $(h_\psi,f)\colon X^{(0)}\to \R\times\R$ is a Morse function, so by Corollary~\ref{cor:morse} it suffices to show that every ascending link of a vertex of $X$ with respect to $(h_\psi,f)$ is contractible. Thanks to the action of $G$, we only need to consider the vertex represented by the trivial coset $F$. Given any finite subcomplex of $\alk(F)$, by Lemma~\ref{lem:cone} the subcomplex lies in a contractible cone (in $\lk(F)$) on a vertex of the form $Fy_{0^m1}$. Moreover, since $\psi(y_{0^m1})=1$ we know $Fy_{0^m1}$ lies in $\alk(F)$, and so the subcomplex lies in the cone in $\alk(F)$ on $Fy_{0^m1}$. Since spheres are compact, this shows that every homotopy sphere in $\alk(F)$ is nullhomotopic, so by Whitehead's Theorem $\alk(F)$ is contractible. (The parallel argument for $-\psi$ uses vertices of the form $Fy_{0^m1}^{-1}$.)
\end{proof}

We quickly obtain:

\begin{corollary}\label{cor:main_cor}
Every finitely presented normal subgroup of $G$ is of type $\F_\infty$.
\end{corollary}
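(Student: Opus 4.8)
The plan is to deduce Corollary~\ref{cor:main_cor} from Theorem~\ref{thrm:main} together with Theorem~\ref{thrm:bnsr_fin_props} and the structural fact, due to Burillo--Nucinkis--Reeves \cite{burillo18}, that every non-trivial normal subgroup of $G$ contains the commutator subgroup $[G,G]$. So let $N\trianglelefteq G$ be a finitely presented normal subgroup. If $N$ is trivial, then it is certainly of type $\F_\infty$, so we may assume $N$ is non-trivial, whence $[G,G]\le N$ by \cite{burillo18}. Now apply Theorem~\ref{thrm:bnsr_fin_props} with $\Gamma=G$, $\Gamma'=N$, and $n=2$ (valid since $G$ is of type $\F_2$, indeed of type $\F_\infty$ by Theorem~\ref{thrm:mainlodha14}): $N$ is of type $\F_2$ if and only if $[\chi]\in\Sigma^2(G)$ for every character $\chi$ with $\chi(N)=0$. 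Since $N$ is assumed finitely presented, it is of type $\F_2$, so this condition holds: every $\chi$ vanishing on $N$ has $[\chi]\in\Sigma^2(G)$.

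Next I would upgrade from $\F_2$ to $\F_\infty$ by running Theorem~\ref{thrm:bnsr_fin_props} in the other direction for every $n$. Fix $n\ge 2$. By Theorem~\ref{thrm:main} we have $\Sigma^n(G)=\Sigma^2(G)$, so the set of character classes $[\chi]$ with $\chi(N)=0$, which we just showed lies in $\Sigma^2(G)$, in fact lies in $\Sigma^n(G)$. Applying Theorem~\ref{thrm:bnsr_fin_props} once more (now with this same $n$) yields that $N$ is of type $\F_n$. As this holds for all $n$, the subgroup $N$ is of type $\F_\infty$, completing the proof.

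The only genuinely delicate point is making sure the chain of implications is legitimate: Theorem~\ref{thrm:bnsr_fin_props} requires $\Gamma$ to be of type $\F_n$, which is fine since $G$ is of type $\F_\infty$, and it requires $\Gamma'$ to contain $[\Gamma,\Gamma]$, which is exactly where \cite{burillo18} enters and why we first dispose of the trivial subgroup. I do not expect any real obstacle here — this is a formal consequence of the main theorem — but one should note that the analogous statement fails to be this clean for the other three Lodha--Moore groups precisely because not every non-trivial normal subgroup contains the commutator subgroup; for those groups one only gets that every finitely presented normal subgroup \emph{containing the commutator subgroup} is of type $\F_\infty$, by the identical argument with the appeal to \cite{burillo18} replaced by that hypothesis. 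A more refined statement, giving the exact finiteness length of an arbitrary (not necessarily finitely presented) normal subgroup of $G$ in terms of the $\Sigma^n(G)$ and the bijection between non-trivial normal subgroups and subgroups of $\Z^3$, is recorded separately as Corollary~\ref{cor:classify}.
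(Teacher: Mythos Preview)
Your proof is correct and takes essentially the same approach as the paper, which simply notes that the result is immediate from Theorem~\ref{thrm:bnsr_fin_props}, Theorem~\ref{thrm:main}, and \cite{burillo18}; you have spelled out the implicit two-direction use of Theorem~\ref{thrm:bnsr_fin_props} in more detail than the paper does. One small correction: the reference \cite{burillo18} is Burillo--Lodha--Reeves, not Burillo--Nucinkis--Reeves.
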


\begin{proof}
Since every non-trivial normal subgroup of $G$ contains the commutator subgroup $[G,G]$ \cite{burillo18}, this is immediate from Theorem~\ref{thrm:bnsr_fin_props} and Theorem~\ref{thrm:main}.
\end{proof}

\subsection{Finiteness properties of normal subgroups of $G$}\label{sec:classify_normal}

Since the non-trivial normal subgroups of $G$ are in natural one-to-one correspondence with the subgroups of $\Z^3$, we can use Theorem~\ref{thrm:bnsr_fin_props} and the full computation of the BNSR-invariants of $G$ to get a complete picture of the finiteness properties of normal subgroups. Let $\alpha\colon G\to \Z^3$ be the abelianization map $\alpha=(\chi_0,\chi_1,\psi)$, and for $i=1,2,3$ let $\pi_i\colon \Z^3\to \Z$ be projection onto the $i$th coordinate. The non-trivial normal subgroups of $G$ are precisely of the form $\alpha^{-1}(A)$ for $A\le \Z^3$. The classification is as follows:

\begin{corollary}\label{cor:classify}
If $\pi_1(A)=\{0\}$ or $\pi_2(A)=\{0\}$ then $\alpha^{-1}(A)$ is not finitely generated. If this is not the case but $(a\pi_1+b\pi_2)(A)=\{0\}$ for some $a,b>0$ then $\alpha^{-1}(A)$ is finitely generated but not finitely presented. In all other cases $\alpha^{-1}(A)$ is of type $\F_\infty$.
\end{corollary}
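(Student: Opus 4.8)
The plan is to translate the statement into the language of the character sphere $\Sigma(G)$ and read it off from the complete computation of the BNSR-invariants of $G$ via the Bieri--Renz criterion (Theorem~\ref{thrm:bnsr_fin_props}). Every character of $G$ factors uniquely through the abelianization $\alpha\colon G\to\Z^3$, so I would identify $\Hom(G,\R)$ with the real dual $(\R^3)^*$; under this identification $\chi_0,\chi_1,\psi$ become the (now $\R$-linear) coordinate functionals $\pi_1,\pi_2,\pi_3$. Writing a character as $\ell\circ\alpha$ with $\ell\in(\R^3)^*$ and using surjectivity of $\alpha$, one has $(\ell\circ\alpha)(\alpha^{-1}(A))=\ell(A)$, so $\ell\circ\alpha$ vanishes on $\alpha^{-1}(A)$ exactly when $\ell$ lies in the annihilator $V^{\perp}\subseteq(\R^3)^*$ of the real span $V\defeq\operatorname{span}_{\R}(A)$. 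Since $\alpha^{-1}(A)\supseteq\ker\alpha=[G,G]$, Theorem~\ref{thrm:bnsr_fin_props} then says: $\alpha^{-1}(A)$ is of type $\F_n$ if and only if $[\ell\circ\alpha]\in\Sigma^n(G)$ for every nonzero $\ell\in V^{\perp}$; write $\Sigma_A\subseteq\Sigma(G)$ for the set of such classes.

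Next I would record the shape of the invariants. By \cite{zaremsky16} and Theorem~\ref{thrm:main},
\[
\Sigma^1(G)=\Sigma(G)\setminus\{[\chi_0],[\chi_1]\},\qquad
\Sigma^n(G)=\Sigma^2(G)=\Sigma(G)\setminus\{[a\chi_0+b\chi_1]\mid a,b\ge 0,\ (a,b)\ne(0,0)\}\quad(n\ge 2),
\]
so in particular $\Sigma^\infty(G)=\Sigma^2(G)$. Under the dictionary above, $[\chi_0]\in\Sigma_A$ iff $\pi_1\in V^{\perp}$ iff $\pi_1(A)=\{0\}$; likewise $[\chi_1]\in\Sigma_A$ iff $\pi_2(A)=\{0\}$; and for $a,b\ge 0$ not both zero, $[a\chi_0+b\chi_1]\in\Sigma_A$ iff $(a\pi_1+b\pi_2)(A)=\{0\}$. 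Combining these observations with the displayed equalities gives: $\alpha^{-1}(A)$ is finitely generated iff $\pi_1(A)\ne\{0\}$ and $\pi_2(A)\ne\{0\}$; granting that, it is finitely presented iff in addition $(a\pi_1+b\pi_2)(A)\ne\{0\}$ for all $a,b>0$; and, crucially, as soon as $\alpha^{-1}(A)$ is finitely presented, $\Sigma_A$ avoids all of $\Sigma(G)\setminus\Sigma^2(G)$, hence lies in $\Sigma^\infty(G)=\Sigma^2(G)$, so $\alpha^{-1}(A)$ is automatically of type $\F_\infty$.

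The three clauses of the corollary are now immediate. If $\pi_1(A)=\{0\}$ or $\pi_2(A)=\{0\}$, then $[\chi_0]$ or $[\chi_1]$ lies in $\Sigma_A\setminus\Sigma^1(G)$, so $\alpha^{-1}(A)$ is not of type $\F_1$, i.e.\ not finitely generated. If this fails but $(a\pi_1+b\pi_2)(A)=\{0\}$ for some $a,b>0$, then $\Sigma_A\subseteq\Sigma^1(G)$ (as $[\chi_0],[\chi_1]\notin\Sigma_A$) while $[a\chi_0+b\chi_1]\in\Sigma_A\setminus\Sigma^2(G)$, so $\alpha^{-1}(A)$ is of type $\F_1$ but not $\F_2$, i.e.\ finitely generated but not finitely presented. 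In the remaining case $\pi_1(A)\ne\{0\}$, $\pi_2(A)\ne\{0\}$, and $(a\pi_1+b\pi_2)(A)\ne\{0\}$ for all $a,b>0$; these three conditions forbid every class $[a\chi_0+b\chi_1]$ with $a,b\ge 0$ not both zero from $\Sigma_A$, so $\Sigma_A\subseteq\Sigma^\infty(G)$ and $\alpha^{-1}(A)$ is of type $\F_\infty$. I expect no real obstacle in this argument --- it is essentially bookkeeping --- the only delicate step being the identification in the first paragraph of the characters vanishing on $\alpha^{-1}(A)$ with the annihilator of $\operatorname{span}_{\R}(A)$, and the matching of the coordinate-projection hypotheses on $A$ with membership of the distinguished classes $[\chi_0],[\chi_1],[a\chi_0+b\chi_1]$ in $\Sigma_A$. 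The strengthening from ``type $\F_2$'' to ``type $\F_\infty$'' is exactly where Theorem~\ref{thrm:main}, i.e.\ $\Sigma^\infty(G)=\Sigma^2(G)$, is used.
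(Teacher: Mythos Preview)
Your proposal is correct and follows essentially the same approach as the paper's proof: both apply Theorem~\ref{thrm:bnsr_fin_props} to $\alpha^{-1}(A)\supseteq[G,G]$, identify the characters killing $\alpha^{-1}(A)$ with the linear functionals vanishing on $A$, and then read off each clause from the explicit description of $\Sigma^1(G)$ and $\Sigma^2(G)=\Sigma^\infty(G)$. Your write-up is somewhat more explicit about the linear-algebra dictionary (the annihilator $V^\perp$ and the set $\Sigma_A$), whereas the paper phrases everything directly in terms of whether $\alpha^{-1}(A)$ lies in $\ker(a\chi_0+b\chi_1)$, but the substance is identical.
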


\begin{proof}
In all of this, we use Theorem~\ref{thrm:bnsr_fin_props}. If $\pi_1(A)=\{0\}$ then $\alpha^{-1}(A)\le \ker(\chi_0)$, and since $[\chi_0]\not\in \Sigma^1(G)$, $\alpha^{-1}(A)$ is not finitely generated. A similar argument handles the $\pi_2(A)=\{0\}$ case. Now assume $\pi_1(A)\ne\{0\}$ and $\pi_2(A)\ne\{0\}$, so $\alpha^{-1}(A)$ does not lie in the kernel of $\chi_0$ or $\chi_1$. Since $\Sigma^1(G)=\Sigma(G)\setminus\{[\chi_0],[\chi_1]\}$ this implies $\alpha^{-1}(A)$ is finitely generated. If $(a\pi_1+b\pi_2)(A)=\{0\}$ for some $a,b>0$ then $\alpha^{-1}(A)$ lies in the kernel of $a\chi_0+b\chi_1$, which since $[a\chi_0+b\chi_1]\not\in\Sigma^2(G)$ implies $\alpha^{-1}(A)$ is not finitely presented. Now assume this is not the case, so $\alpha^{-1}(A)$ does not lie in the kernel of $a\chi_0+b\chi_1$ for any $a,b\ge 0$. Since $\Sigma^\infty(G)=\Sigma(G)\setminus \{[a\chi_0+b\chi_1]\mid a,b\ge 0\}$ we get that $\alpha^{-1}(A)$ is of type $\F_\infty$.
\end{proof}

To give some easy examples, if $\Z^3=\Z e_1 \oplus \Z e_2 \oplus \Z e_3$ then $\alpha^{-1}(\Z e_1)$ is not finitely generated, $\alpha^{-1}(\Z (e_1-e_2))$ is finitely generated but not finitely presented, and $\alpha^{-1}(\Z (e_1+e_2))$ is of type $\F_\infty$.

Finally let us recall the computations of $\Sigma^1(H)$ and $\Sigma^2(H)$ from \cite{zaremsky16} for the other Lodha--Moore groups $H$, for the record, now that we know $\Sigma^\infty(H)=\Sigma^2(H)$. First we have $\Sigma^1(G_y)=\Sigma(G_y)\setminus\{[\chi_0],[-\psi_1]\}$, and $\Sigma^2(G_y)$ equals $\Sigma^1(G_y)$ with the convex hull of $[\chi_0]$ and $[-\psi_1]$ removed. Next we have $\Sigma^1(_yG)=\Sigma(_yG)\setminus\{[\psi_0],[\chi_1]\}$, and $\Sigma^2(_yG)$ equals $\Sigma^1(_yG)$ with the convex hull of $[\psi_0]$ and $[\chi_1]$ removed. Finally we have $\Sigma^1(_yG_y)=\Sigma(_yG_y)\setminus\{[\psi_0],[-\psi_1]\}$, and $\Sigma^2(_yG_y)$ equals $\Sigma^1(_yG_y)$ with the convex hull of $[\psi_0]$ and $[-\psi_1]$ removed. From this one can easily classify which normal subgroups of a Lodha--Moore group $H$ containing the commutator subgroup $[H,H]$ are not finitely generated, which are finitely generated but not finitely presented, and which are of type $\F_\infty$, and we leave this as an exercise for the reader.

\section{Application: An exotic type $\F_\infty$ simple group}\label{sec:circle}

In \cite{lodha19} the first author constructed a finitely presented infinite simple group $S$ of homeomorphisms of the circle that does not admit any non-trivial action by $C^1$-diffeomorphisms, nor by piecewise linear homeomorphisms, on any $1$-manifold. The construction is closely related to that of $_yG_y$, and we can now use our results to prove that $S$ is also of type $\F_\infty$. In particular $S$ is a member of the very interesting and important class of simple groups of type $\F_\infty$.

Let us recall the definition of $S$ from \cite{lodha19}. First we need to define Thompson's group $T$ as a group of homeomorphisms of $\{0,1\}^\N$. To this end, we define a family of homeomorphisms $p_n\colon \{0,1\}^\N \to \{0,1\}^\N$ for $n\ge 0$ as follows:

\[
\xi \cdot p_n \defeq \left\{\begin{array}{ll}
1^{k+1}0\eta & \text{if }\xi=1^k 0\eta \text{ for } 0\le k\le n-1\\
1^{n+1}\eta & \text{if }\xi=1^n 0\eta\\
0\eta & \text{if }\xi=1^{n+1}\eta \text{.}
\end{array}\right.
\]

Note that when $n=0$ the topmost case does not occur, and $p_0$ is simply the map
\[
\xi \cdot p_0 \defeq \left\{\begin{array}{ll}
1\eta & \text{if }\xi=0\eta\\
0\eta & \text{if }\xi=1\eta
\end{array}\right. \text{.}
\]

Thompson's group $T$ is the group of homeomorphisms of $\{0,1\}^\N$ generated by Thompson's group $F$ and all the $p_n$. Standard facts about $T$ are that it is simple and of type $\F_\infty$. Ghys and Sergiescu also proved in \cite{ghys87} that $T$ admits a faithful action on the circle by $C^\infty$-diffeomorphisms.

Now we can define the group $S$ from \cite{lodha19}.

\begin{definition}\label{def:S}
The group $S$ is the group of homeomorphisms of $\{0,1\}^\N$ generated by Thompson's group $T$ together with the element $y_{10}y_{110}^{-1}$.
\end{definition}

\begin{remark}
The definition of $S$ above is different from the definition given in the introduction. However the prescribed action of $S$ from the introduction, as a group of homeomorphisms of $\mathbb{S}^1=\R\cup\{\infty\}$, is semiconjugate to the above action. The semiconjugacy is provided by the map $\Phi\colon \{0,1\}^\N\to \R\cup \{\infty\}$ defined below in Subsection~\ref{sec:HatSFinfty}.
\end{remark}

By \cite[Theorem~1.1]{lodha19}, $S$ is a finitely presented infinite simple group that admits a faithful action by homeomorphisms on the circle, but admits no non-trivial action by $C^1$-diffeomorphisms, nor by piecewise linear homeomorphisms, on any $1$-manifold. The main goal of this section is to prove that $S$ is of type $\F_\infty$:

\begin{theorem}\label{thrm:circle}
The group $S$ is of type $\F_\infty$.
\end{theorem}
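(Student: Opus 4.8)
The idea is to realize $S$ as a quotient or, better, to find a short exact sequence relating $S$ to a kernel of a discrete character on one of the Lodha--Moore groups, so that the type $\F_\infty$ property is transferred via Theorem~\ref{thrm:bnsr_fin_props} and the now-complete computation of $\Sigma^\infty(\,_yG_y)$. The first step is to locate $S$ inside the world of the Lodha--Moore groups. Since $S$ is generated by $T$ together with $y_{10}y_{110}^{-1}$, and since $T$ is generated by $F$ together with the $p_n$, the natural ambient object is the group $\widehat{S}$ generated by $\,_yG_y$ (or just the relevant $y_s$'s) together with all the $p_n$; I would introduce this ``$\,_yG_y$ meets $T$'' group $\widehat{S}$, show it is of type $\F_\infty$ by the same cluster-complex-plus-Morse-theory machinery (or by citing that the $p_n$ only enlarge $\,_yG_y$ in a controlled, $\F_\infty$-preserving way), and then identify $S$ as a specific normal subgroup of $\widehat{S}$, namely the kernel of a character. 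The character in question should be the restriction of $\psi$ (the ``number of $y$'s'' character), since $S$ is generated by $T\subseteq\ker(\psi)$ together with the single element $y_{10}y_{110}^{-1}$, which is a special form of even length and hence also lies in $\ker(\psi)$; so $S\le\ker(\psi|_{\widehat{S}})$, and one must check the reverse inclusion, i.e. that $S$ is exactly this kernel.

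Granting that identification, the argument concludes as follows. The group $\widehat{S}$ is of type $\F_\infty$ and $\psi|_{\widehat{S}}$ is a discrete character (its image is $\Z$, witnessed by $y_{10}$, say). By Theorem~\ref{thrm:bnsr_fin_props}, $\ker(\psi|_{\widehat{S}})$ is of type $\F_\infty$ if and only if both $[\psi]$ and $[-\psi]$ lie in $\Sigma^\infty(\widehat{S})$. I would establish $[\pm\psi]\in\Sigma^\infty(\widehat{S})$ by running essentially the same Morse-theoretic argument as in the proof of Theorem~\ref{thrm:main}: build (or reuse) a contractible complex on which $\widehat{S}$ acts with $\F_\infty$ cell stabilizers and finitely many orbits of cells in each dimension, take $h_\psi$ as the height function, apply Lemma~\ref{lem:psi_morse}-style reasoning to see $(h_\psi,f)$ is a Morse function, and invoke Corollary~\ref{cor:morse} after checking that ascending and descending links are contractible. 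The key point enabling the link computation is again a coning argument in the spirit of Lemma~\ref{lem:cone}: any finite subcomplex of a vertex link can be coned off using a vertex of the form $Fy_{0^m1}$ (with $\psi$-value $1$), and since the $p_n$ act by permuting things compatibly this coning survives in the ascending link. Alternatively, and more economically, if $S$ can be shown to be \emph{commensurable} to, or a finite-index overgroup/subgroup of, $\ker(\psi|_{\,_yG_y})$ directly — using that $S\cap\,_yG_y$ and $S$ differ by finitely generated, $\F_\infty$-preserving pieces involving the $p_n$ — then one applies Theorem~\ref{thrm:main} for $H=\,_yG_y$ (which gives $[\pm\psi]\in\Sigma^\infty(\,_yG_y)$), Theorem~\ref{thrm:bnsr_fin_props} to conclude $\ker(\psi|_{\,_yG_y})$ is $\F_\infty$, and then standard closure of $\F_\infty$ under commensurability and finite-index over/undergroups.

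The main obstacle I anticipate is the bookkeeping around the $p_n$ generators, i.e. making precise the group $\widehat{S}$ (or the commensurability class), proving it is of type $\F_\infty$, and — most delicately — pinning down \emph{exactly} which character $S$ is the kernel of and verifying both inclusions of $S=\ker(\psi|_{\widehat{S}})$. One has to be careful that the $p_n$ genuinely lie in $\ker(\psi)$ (they involve no $y$'s, so $\psi(p_n)=0$, which is fine), that $y_{10}y_{110}^{-1}$ is the \emph{only} new generator beyond $T$ so that $S$ is generated by elements of $\ker(\psi)$, and conversely that every element of $\ker(\psi|_{\widehat{S}})$ can be rewritten using $T$ and $y_{10}y_{110}^{-1}$ — this last surjectivity statement is where the real content lies, and it likely requires a normal-form or rewriting argument specific to $S$, possibly extracted from the presentation of $S$ computed in \cite{lodha19}. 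A secondary technical point is that $\widehat{S}$ is not itself simple and its character space must be understood well enough to know $\psi$ restricts to a nontrivial discrete character with the right kernel; once that is in hand, the Morse theory is a routine adaptation of Section~\ref{sec:main}.
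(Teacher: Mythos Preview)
Your overall architecture is exactly right and matches the paper: introduce the ambient group $\widehat{S}=\langle\,_yG_y,T\rangle$, prove it is of type $\F_\infty$, identify $S$ as $\ker(\widehat{\psi})$ (where $\widehat{\psi}$ extends $\psi$), and deduce $S$ is $\F_\infty$ by showing $[\pm\widehat{\psi}]\in\Sigma^\infty(\widehat{S})$ and invoking Theorem~\ref{thrm:bnsr_fin_props}. The paper also carries out the normal-form argument you anticipate to prove $S=\ker(\widehat{\psi})$.

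The genuine gap is in your proposed mechanism for the two hard steps --- proving $\widehat{S}$ is $\F_\infty$ and proving $[\pm\widehat{\psi}]\in\Sigma^\infty(\widehat{S})$. You suggest adapting the cluster complex $X$ and rerunning the Morse argument of Section~\ref{sec:main} (coning on $Fy_{0^m1}$, etc.). The paper explicitly flags this as the approach that does \emph{not} work: adapting cluster complexes to $\widehat{S}$ ``poses considerable technical challenges'', and the $p_n$ do not interact with the cluster structure in a way one can push through. Moreover, your Morse-theoretic template requires cell stabilizers to lie in $\ker(\widehat{\psi})$ (this is a hypothesis of Lemma~\ref{lem:bnsr_non_cocpt}); for the natural complex the paper actually uses, the stabilizers contain full copies of $\,_yG_y$ on which $\widehat{\psi}$ is nontrivial, so that route is blocked. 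Your fallback --- that $S$ might be commensurable with $\ker(\psi|_{\,_yG_y})$ --- also fails, since $S$ contains $T$, which is not a subgroup of $\,_yG_y$.

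What the paper does instead is the missing idea: it lets $\widehat{S}$ act on the infinite simplex $\Delta(Z)$ with vertex set $\Q\cup\{\infty\}$. Transitivity of $T$ on circularly ordered tuples gives one orbit of simplices in each dimension, and simplex stabilizers are virtually direct products of finitely many copies of $\,_yG_y$ (rigid stabilizers of arcs). Brown's Criterion then gives $\widehat{S}$ of type $\F_\infty$. For the BNSR step the paper applies the Meier--Meinert--VanWyk criterion (Theorem~\ref{thrm:bnsr_stabs}): to get $[\pm\widehat{\psi}]\in\Sigma^\infty(\widehat{S})$ it suffices that $[\pm\widehat{\psi}|_{\Stab(e)}]\in\Sigma^\infty(\Stab(e))$ for every cell $e$. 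Since $\widehat{\psi}$ restricts to $\psi$ on each $\,_yG_y$ factor, this follows from the already-proved $[\pm\psi]\in\Sigma^\infty(\,_yG_y)$ together with the direct-product and finite-index results (Theorems~\ref{thrm:dir_prod} and~\ref{thrm:fin_index}). In short, the paper bypasses a second Morse-theory computation entirely by reducing to the $\,_yG_y$ case via stabilizers; that reduction is the step your plan is missing.
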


As a remark, to prove this one could hope to mimic the proof from \cite{lodha14} that $G$ is of type $\F_\infty$, using cluster complexes, but adapting this to $S$ turns out to pose significant challenges.

\subsection{The group $\widehat{S}$}

An important ingredient in the proof of Theorem~\ref{thrm:circle} will be the group of homeomorphisms of $\{0,1\}^\N$ generated by $_yG_y$ and $T$, which we denote by $\widehat{S}$. Thus $\widehat{S}$ is generated by the families $x_s$, $y_s$, and $p_n$, for all $s\in \{0,1\}^{<\N}$ and $n\ge 0$. This group is also interesting for its own sake; it is the ``circle version'' of the Lodha-Moore groups in an analogous way to how $T$ is the ``circle version'' of Thompson's group $F$. In this section we shall produce an explicit infinite presentation for $\widehat{S}$ and compute its abelianization. It will be a consequence of our work that the group $S$ is precisely the kernel of this abelianization map.

\begin{lemma}
The group $\widehat{S}$ satisfies:
\[
\widehat{S} = \langle _yG_y,T\rangle = \langle G_y,T\rangle = \langle _yG,T\rangle = \langle G,T\rangle = \langle y_{10},T\rangle \text{.}
\]
\end{lemma}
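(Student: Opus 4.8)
The plan is to prove the chain of equalities by establishing containments in a convenient cyclic order. Writing $H_1 = \langle {}_yG_y, T\rangle$, $H_2 = \langle G_y, T\rangle$, $H_3 = \langle {}_yG, T\rangle$, $H_4 = \langle G, T\rangle$, and $H_5 = \langle y_{10}, T\rangle$, the obvious inclusions are $H_5 \subseteq H_2, H_3, H_4 \subseteq H_1$, since each of $G_y$, ${}_yG$, $G$ contains $F$ and contains $y_{10}y_{110}^{-1}$ (indeed $10$ and $110$ are neither of the form $0^n$ nor $1^n$), and all of these are contained in ${}_yG_y$. Actually, the cleanest route is to show $H_1 \subseteq H_5$, since then all five groups coincide. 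So the entire problem reduces to: \emph{every generator $y_s$ (for arbitrary $s\in\{0,1\}^{<\N}$) lies in $\langle y_{10}, T\rangle$.} Note $x_s \in F \le T$ already, so those generators are free.

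First I would record the relevant conjugation formulas. Thompson's group $T$ acts on $\{0,1\}^\N$ in a way that is ``transitive on cones'': for any two finite binary words $s, t$ there is an element of $T$ (even of $F$, if we are careful, but $T$ certainly suffices and gives more room) carrying the cone $s\{0,1\}^\N$ homeomorphically onto the cone $t\{0,1\}^\N$ via the canonical prefix-substitution map. Conjugating the homeomorphism $y_{10}y_{110}^{-1}$, which is supported on the cone $1\{0,1\}^\N$ (more precisely on $10\{0,1\}^\N \cup 110\{0,1\}^\N \subseteq 1\{0,1\}^\N$) and acts there as a rescaled copy of $y_0 y_{10}^{-1}$, by such a $T$-element should produce $y_{t0}y_{t10}^{-1}$ for any $t$, and more generally $y_{u0}y_{u10}^{-1}$-type special forms based at arbitrary prefixes. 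The key algebraic identity to exploit is the one already quoted in the paper just before Figure~\ref{fig:crowded_cluster}: $x_s y_{s0} y_{s10}^{-1} y_{s11} = y_s$, equivalently $y_s = x_s \, y_{s0}\, y_{s10}^{-1}\, y_{s11}$. So if I can produce all special forms of the shape $y_{a}y_{b}^{-1}y_{c}$ (with $a,b,c$ the three children-type subscripts $s0, s10, s11$) inside $\langle y_{10}, T\rangle$, then every $y_s$ follows, and hence all of ${}_yG_y$, finishing the proof.

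The concrete steps, then: (1) Show $y_{10}y_{110}^{-1} \in H_5$ trivially (it is a generator), and by conjugating by suitable elements of $T$ obtain $y_{s0}y_{s10}^{-1} \in H_5$ for every $s$, using that $T$ contains elements realizing the prefix-replacement homeomorphism sending the cone on $10$ to the cone on $s0$ and the cone on $110$ to the cone on $s10$ simultaneously — this requires checking $s0, s10$ are ``consecutive-compatible'' with $10, 110$ under a single tree-pair, which they are since both pairs have the form $(u0, u10)$ after stripping a common prefix (here $u=1$ versus $u=s$). (2) From elements of the form $y_{a}y_{b}^{-1}$ with $a,b$ consecutive, build up longer special forms: observe that a product like $(y_{s0}y_{s10}^{-1})\cdot(\text{conjugate giving } y_{s11}\cdots)$ can be assembled, or more directly produce $y_{s0}y_{s10}^{-1}y_{s11}$ by a single conjugation of a fixed ``model'' special form if one can show that model already lies in $H_5$; the model $y_0 y_{10}^{-1} y_{11}$ can be gotten from $y_{10}y_{110}^{-1}$ by a $T$-conjugation followed by multiplying by a further $T$-conjugate, which is the one genuinely fiddly bookkeeping step. (3) Apply $y_s = x_s\, y_{s0}\, y_{s10}^{-1}\, y_{s11}$ with $x_s \in F \le T$ to conclude $y_s \in H_5$ for all $s$, hence $\langle F, \{y_s\}\rangle = {}_yG_y \subseteq H_5$, hence $H_1 \subseteq H_5$, closing the loop.

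The main obstacle I anticipate is step (2): manufacturing a full length-three special form $y_{s0}y_{s10}^{-1}y_{s11}$ (or even just the single model $y_0y_{10}^{-1}y_{11}$) from the single two-term generator $y_{10}y_{110}^{-1}$ together with $T$, purely by group operations. Conjugation alone only ever produces two-term special forms (conjugates of $y_{10}y_{110}^{-1}$ are again supported on two disjoint cones), so one must genuinely \emph{multiply} two such conjugates whose supports overlap in the right way so that the product telescopes into a three-term special form — this is where one has to be careful that the overlapping pieces cancel correctly rather than producing something outside ${}_yG_y$'s special-form structure. I would handle this by choosing the two conjugates to be $y_{s0}y_{s10}^{-1}$ and $y_{s10}y_{s11}^{-1}$ (note the shared middle term $y_{s10}$ with opposite signs), whose product is exactly $y_{s0}y_{s11}^{-1}$ — hmm, that collapses the wrong way; the correct pairing is $y_{s0}y_{s100}^{-1}$ with an appropriately based form so that the telescoping yields $y_{s0}y_{s10}^{-1}y_{s11}$, using that $s0, s10, s11$ is a consecutive list. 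Verifying this telescoping identity rigorously (and checking the supports behave) is the crux; everything else is routine transitivity-of-$T$ and the already-established identity $x_s y_{s0}y_{s10}^{-1}y_{s11} = y_s$.
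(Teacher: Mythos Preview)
You have conflated the generator $y_{10}$ of $H_5 = \langle y_{10}, T\rangle$ with the element $y_{10}y_{110}^{-1}$ (which is the extra generator of $S$, not of $\widehat{S}$ or $H_5$). Your step~(1) asserts that $y_{10}y_{110}^{-1}$ is ``trivially'' in $H_5$ because ``it is a generator'', but it is not: the generator is the single element $y_{10}$. This misidentification is what forces you into the difficult and unfinished step~(2), where you try to manufacture three-term special forms from products of two-term conjugates; you yourself note the telescoping does not work out cleanly.

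Once you use the correct generator, the argument collapses to something much shorter, and this is exactly the paper's proof. The partial action of $T$ on $\{0,1\}^{<\N}\setminus\{\emptyset\}$ is transitive on \emph{single} nonempty strings (not just on consecutive pairs), so for every nonempty $s$ there is $f\in T$ with $10\cdot f = s$, and then $f^{-1}y_{10}f = y_s$. Thus every $y_s$ with $s\neq\emptyset$ is already a $T$-conjugate of $y_{10}$, hence lies in $H_5$ directly, with no need to pass through two-term or three-term special forms. The only remaining generator is $y_\emptyset$, and for that you invoke the identity you already cited, $y_\emptyset = x_\emptyset\, y_0\, y_{10}^{-1}\, y_{11}$, all of whose right-hand factors are now known to lie in $H_5$. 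This gives ${}_yG_y \le H_5$ and closes the chain. Your outline of the containments $H_5 \subseteq H_2,H_3,H_4 \subseteq H_1$ is correct (indeed $y_{10}\in G$ since $10$ is neither $0^n$ nor $1^n$), so with the above simplification the proof is complete.
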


\begin{proof}
We only need to show that $_yG_y\le \langle y_{10},T\rangle$, for which it suffices to prove that $y_s\in \langle y_{10},T\rangle$ for all $s\in \{0,1\}^{<\N}$. Recall that $T$ admits a transitive partial action on the set $\{0,1\}^{<\N}\setminus\{\emptyset\}$, and so $y_s\in \langle y_{10},T\rangle$ for all $\emptyset\ne s\in \{0,1\}^{<\N}$. Moreover, thanks to the relation $y_\emptyset=x_\emptyset y_0y_{10}^{-1}y_{11}$ we know that $y_\emptyset\in \langle y_{10},T\rangle$ as well.
\end{proof}

We now provide an explicit infinite presentation for the group $\widehat{S}$. The following is the infinite generating set $\mathcal{X}$:

\begin{enumerate}
\item ($y$-generators) $y_{s}$ for $s\in \{0,1\}^{<\N}$
\item ($x$-generators) $x_{s}$ for $s\in \{0,1\}^{<\N}$
\item ($p$-generators) $p_n$ for $n\ge 0$.
\end{enumerate}

We now list a set of relations $\mathcal{R}$ in the generating set $\mathcal{X}$. We separate them in three families. In what follows below, we have $s,t\in \{0,1\}^{<\N}$ and $n\ge 0$.

First, the relations in the $x$-generators.
\begin{enumerate}
\item[(1)] $x_s^2=x_{s0}x_sx_{s1}$
\item[(2)] $x_{s}x_t=x_tx_{s\cdot x_t}$ if $s\cdot x_t$ is defined.
\end{enumerate}

Secondly, the relations in the $x$-generators and $p$-generators.
\begin{enumerate}
\item[(3)] $x_{1^m}^{-1} p_n x_{1^{m+1}}=p_{n+1}\text{ if }n<m\qquad p_n x_\emptyset=p_{n+1}^2$.
$p_n=x_{1^n}p_{n+1}\qquad p_n^{n+2}=1$.
\end{enumerate}

Finally, the relations involving the $y$-generators:
\begin{enumerate}
\item[(4)] $y_s=x_sy_{s0}y_{s10}^{-1}y_{s11}$.
\item[(5)] $y_sx_t=x_ty_{s\cdot x_t}$ if $s\cdot x_t$ is defined.
\item[(6)] $y_sp_n=p_ny_{s\cdot p_n}$ if $s\cdot p_n$ is defined.
\item[(7)] $y_sy_t=y_ty_s$ if $s,t$ are independent.
\end{enumerate}

We remark that the $x$- and $y$-generators with the relations $(1)-(2)$ and $(4)-(7)$ provide an infinite presentation for the group $_yG_y$. If we impose the suitable restrictions for $s,t$ in the occurrences of $y_s,y_t$ in the presentation, then this provides an infinite presentation for the groups $_yG,G_y,G$. This was the main result of \cite{lodha16}. Moreover, the $x$-generators with the relations $(1)-(2)$ provide a presentation for $F$, and the $x,p$-generators with the relations $(1)-(3)$ provide a presentation for $T$.

To prove that $\langle \mathcal{X}\mid \mathcal{R}\rangle$ is a presentation for $\widehat{S}$ we need the following notion of a standard form.

\begin{definition}[Standard form]
Define a partial order $<$ on $\{0,1\}^{<\N}$ as follows. We have $s_i<s_j$ if either of the following holds:
\begin{enumerate}
\item There is a sequence $t\in \{0,1\}^{<\N}$ such that $s_i=s_jt$.
\item There are sequences $t_1,t_2,t_3\in \{0,1\}^{<\N}$ such that $s_i=t_10t_2$ and $s_j=t_11t_3$.
\end{enumerate}
A word of the form
\[
fy_{s_1}^{t_1}\dots y_{s_n}^{t_n}
\]
is said to be a \emph{standard form} if $f\in T$ and $s_i<s_j$ whenever $i<j$.
\end{definition}

\begin{lemma}\label{lem:standardform}
Every word in the generating set $\mathcal{X}$ can be converted using relations in $\mathcal{R}$ into a word in standard form.
\end{lemma}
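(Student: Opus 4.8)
The plan is to prove that every word in $\mathcal{X}$ can be rewritten, using relations in $\mathcal{R}$, into the shape $f y_{s_1}^{t_1}\cdots y_{s_n}^{t_n}$ with $f\in T$ and $s_1<\cdots<s_n$. This will be done in two stages: first push all $x$- and $p$-letters to the left past all $y$-letters so as to collect a $T$-word in front, then rearrange the resulting $y$-tail into increasing order with respect to $<$. The first stage is the analogue (for $\widehat S$) of the normal form argument for $_yG_y$ from \cite{lodha16}, with the $p_n$ generators thrown in.

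For the first stage, I would argue by induction on the number of ``$y$ then non-$y$'' adjacencies in the word, i.e.\ occurrences of a pattern $y_s^{\pm1} z$ where $z$ is an $x$- or $p$-generator. Given such an adjacency, relations (5) and (6) let us rewrite $y_s^{\pm1} z = z\, y_{s\cdot z}^{\pm1}$ whenever the partial action $s\cdot z$ is defined; the only obstruction is when $s\cdot x_t$ or $s\cdot p_n$ is \emph{not} defined, which for $x_t$ means $t$ is a prefix of $s$ (or vice versa in a way that's handled) and similarly for $p_n$. In the problematic cases one first applies relation (4), $y_s = x_s y_{s0} y_{s10}^{-1} y_{s11}$, to replace $y_s$ by a product of $y$'s with strictly longer subscripts together with an $x_s$; repeating this finitely often (the relevant $x_t$ or $p_n$ affects only a bounded-depth portion of $s$, so after finitely many subdivisions every $y$-subscript is long enough that the partial action is defined) makes all subsequent commutations legal. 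This is exactly the mechanism used in \cite{lodha16}, and the $p_n$ case is structurally identical since $p_n$ also acts by a tree-like partial bijection on $\{0,1\}^{<\N}$. After all non-$y$ letters are on the left, relations (1)--(3) rewrite that prefix into a single element of $T$ (using that (1)--(3) present $T$).

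For the second stage, with a word $f y_{s_1}^{t_1}\cdots y_{s_n}^{t_n}$, $f\in T$, I would sort the $y$-tail by a bubble-sort argument using relation (7): whenever two adjacent subscripts $s_i, s_{i+1}$ are $<$-incomparable they are independent (neither is a prefix of the other, and they are not nested in the sense of the two defining clauses of $<$), so $y_{s_i}^{t_i} y_{s_{i+1}}^{t_{i+1}} = y_{s_{i+1}}^{t_{i+1}} y_{s_i}^{t_i}$ by (7), allowing us to swap them. If $s_i$ and $s_{i+1}$ are comparable but in the wrong order ($s_i > s_{i+1}$), the same relation (7) does not directly apply, so here one must first invoke (4) to subdivide one of the two $y$-generators until the relevant subscripts become independent, and then commute — again a finite process because the two subscripts are comparable in a bounded way. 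Iterating over all inversions terminates, yielding a standard form.

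The main obstacle is making the termination/bookkeeping of the inductive rewriting genuinely finite and rigorous: every application of relation (4) \emph{creates} new $y$-generators (three of them) and a new $x$-generator, so one needs a carefully chosen complexity measure — e.g.\ a measure that tracks, for each bad adjacency, how deep the offending $x_t$ or $p_n$ reaches into the $y$-subscript, and shows that (4) strictly decreases this while the newly introduced $x_s$ can be commuted leftward and absorbed without reintroducing the problem — so that the induction actually closes. Setting up this measure correctly (and checking the $p_n$ cases really do behave like the $x_s$ cases, including the torsion relation $p_n^{n+2}=1$ which only matters for collapsing the $T$-prefix, not for the commutations) is where the real work lies; the individual rewriting steps are routine given the relations.
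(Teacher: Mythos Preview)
Your approach is correct and matches the paper's own proof, which simply refers the reader to Lemma~5.4 of \cite{lodha16} and notes that handling the additional $p_n$ generators (via relations (3) and (6)) is an elementary adaptation. One small correction to your second stage: the order $<$ is in fact a \emph{total} order on $\{0,1\}^{<\N}$ (any two distinct strings are comparable, by clause (i) if one is a prefix of the other and by clause (ii) otherwise), so your ``$<$-incomparable'' case is vacuous; the operative dichotomy for the bubble-sort is independent versus nested, and when $s_i>s_{i+1}$ with $s_i,s_{i+1}$ independent, relation (7) already applies directly without invoking (4).
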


\begin{proof}
The proof is virtually identical to the proof of the analogous Lemma~5.4 in \cite{lodha16} concerning the groups $_yG_y$ and $G$. It is an elementary exercise to adapt the proof to this setup with the additional relations in $(3)$.
\end{proof}

\begin{proposition}\label{prop:presentation}
$\widehat{S}=\langle \mathcal{X}\mid \mathcal{R}\rangle$.
\end{proposition}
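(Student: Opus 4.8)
The plan is to show that the abstract group $\Gamma \defeq \langle \mathcal{X}\mid \mathcal{R}\rangle$ is isomorphic to $\widehat{S}$ by exhibiting the natural surjection $\Gamma \twoheadrightarrow \widehat{S}$ and proving it is injective. First I would observe that every relation in $\mathcal{R}$ holds among the homeomorphisms $x_s, y_s, p_n$ of $\{0,1\}^\N$; relations $(1)$--$(3)$ are the standard presentation relations for $T$ (known from the literature on Thompson's group $T$), relations $(1)$--$(2)$ and $(4)$--$(7)$ are the Lodha--Moore relations verified in \cite{lodha16}, and relation $(6)$ is a routine check that $y_s$ conjugated by $p_n$ is $y_{s\cdot p_n}$, which follows because $p_n$ acts as a prefix-replacement homeomorphism wherever it is defined. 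This gives a well-defined surjective homomorphism $\varphi\colon \Gamma \to \widehat{S}$, since $\mathcal{X}$ generates $\widehat{S}$.

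For injectivity, the key tool is Lemma~\ref{lem:standardform}: every element of $\Gamma$ is represented by a word in standard form $f y_{s_1}^{t_1}\cdots y_{s_n}^{t_n}$ with $f\in T$ (meaning $f$ lies in the copy of $T$ inside $\Gamma$ generated by the $x$- and $p$-generators, which we may assume is faithfully embedded by the known presentation of $T$) and $s_1 < \cdots < s_n$ in the partial order. So it suffices to show that if such a standard form maps to the identity homeomorphism of $\{0,1\}^\N$, then it is trivial in $\Gamma$. Here I would argue as in \cite{lodha16}: one shows that a non-trivial standard form gives a non-trivial homeomorphism. The idea is to find a binary sequence $\xi$ on which the homeomorphism acts non-trivially. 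If some $y_{s_i}$ appears, one can locate a sequence beginning with $s_i$ (or an appropriate extension determined by the minimal $s_i$ under the order, so that the $x$-part $f$ does not interfere and the other $y_{s_j}$'s act trivially there because of independence/ordering), and on that sequence the single factor $y_{s_i}^{t_i}$ acts non-trivially because $y$ has infinite order with a specific asymptotic behavior; if no $y_{s_i}$ appears then the word lies in $T$ and non-triviality there is the known faithfulness of the $T$-presentation. This forces $n = 0$ and $f = 1$, hence the word is trivial in $\Gamma$.

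The main obstacle I anticipate is the normal-form/injectivity step: carefully checking that a non-trivial standard form acts non-trivially on $\{0,1\}^\N$. This requires (a) confirming that Lemma~\ref{lem:standardform} really does produce a standard form in which the subscripts are totally ordered under $<$ (so there is a well-defined ``innermost'' or ``outermost'' $y$-factor whose effect cannot be cancelled), and (b) a delicate analysis of how the $T$-part $f$ interacts with the $y$-part --- in $\widehat{S}$, unlike in $_yG_y$, the group $T$ acts transitively on $\{0,1\}^{<\N}\setminus\{\emptyset\}$, so one must check that $f$ cannot conspire to move the support of the $y$-factors onto each other and cancel. I would handle this exactly as in the proof of \cite[Lemma~5.4 and the surrounding argument]{lodha16}, noting that the only genuinely new relations are those in $(3)$, which are internal to $T$ and therefore do not affect the behavior of the $y$-generators; thus the Lodha--Moore argument goes through essentially verbatim once one records that the $x,p$-presentation of $T$ is faithful. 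Everything else --- verifying the relations hold in $\widehat{S}$, and assembling the pieces --- is routine.
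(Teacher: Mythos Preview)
Your proposal is essentially correct and follows the same broad outline as the paper --- surject from the abstract group, reduce to standard form via Lemma~\ref{lem:standardform}, then argue injectivity --- but you miss one clean observation that the paper uses to dissolve precisely the obstacle you flag in (b).

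The paper does \emph{not} attempt to extend the Lodha--Moore non-triviality argument to handle an arbitrary $f\in T$. Instead, it observes that any $y$-word $y_{s_1}^{t_1}\cdots y_{s_m}^{t_m}$ fixes both $0^\infty$ and $1^\infty$. Hence if the standard form $f y_{s_1}^{t_1}\cdots y_{s_m}^{t_m}$ represents the identity in $\widehat{S}$, then $f$ must also fix $0^\infty$ and $1^\infty$; but the stabilizer of these two points in $T$ is exactly $F$. Using relations $(1)$--$(3)$ one rewrites $f$ as a word in the $x$-generators alone, and now the entire standard form is a word in the $_yG_y$ generators representing the identity --- so the result of \cite{lodha16} applies directly, with no need to re-run or adapt the non-triviality analysis.

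Your route of directly showing that a non-trivial standard form with $f\in T$ acts non-trivially could presumably be pushed through, but it is genuinely more delicate for the reason you yourself identify: $T$ acts transitively on non-empty addresses, so controlling the interaction between the $T$-part and the $y$-supports is not ``essentially verbatim'' from \cite{lodha16}. The paper's fixed-point observation sidesteps this entirely and reduces the new case to the old one in two lines.
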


\begin{proof}
Given a word $g=w_1\dots w_n$ in the generating set $\mathcal{X}$ that equals the identity in $\widehat{S}$, we need to show that we can use the relations to reduce it to the identity. First, we reduce the word to one in standard form $g=fy_{s_1}^{t_1}\dots y_{s_m}^{t_m}$ using Lemma~\ref{lem:standardform}, where $f$ is a word in the generators of $T$. Note that any word of the form $y_{s_1}^{t_1}\dots y_{s_m}^{t_m}$ must fix $0^\infty$ and $1^\infty$. Since $g$ equals the identity, it follows that $f$ also fixes $0^\infty$ and $1^\infty$. Recall that the subgroup of $T$ consisting of elements that fix $0^\infty$ and $1^\infty$ is precisely Thompson's group $F$. Since $f$ is a word in the $x$- and $p$-generators, and since $f\in F$, we can use the relations $(1)-(3)$ to reduce it to a word in the $x$-generators of $F$. We now have a word in the generating set of $_yG_y$ which equals the identity. In \cite{lodha16} it was shown that this can be reduced to the empty word using the relations $(1)-(2)$ and $(4)-(7)$.
This completes the proof.
\end{proof}

\begin{lemma}\label{lem:abelianization}
The abelianization $\widehat{S}/[\widehat{S},\widehat{S}]$ equals $\Z$. Moreover, the image of $y_s$ for any $s\in \{0,1\}^{<\N}$ in the abelianization equals $1$.
\end{lemma}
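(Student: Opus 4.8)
The plan is to read off the abelianization directly from the presentation $\langle\mathcal{X}\mid\mathcal{R}\rangle$ established in Proposition~\ref{prop:presentation}. Passing to the abelianization turns $\widehat{S}/[\widehat{S},\widehat{S}]$ into the abelian group generated by the images of all $x_s$, $y_s$, $p_n$ subject to the abelianized relations $(1)$--$(7)$. First I would analyze the $x$- and $p$-generators: the relations $(1)$--$(3)$ are exactly a presentation for $T$, and it is standard that $T$ is perfect, so every $x_s$ and every $p_n$ dies in the abelianization. (If one prefers an internal argument, relation $(1)$ abelianizes to $x_s=x_{s0}+x_{s1}$ and relation $(2)$ forces $x_s=x_{s\cdot x_t}$ whenever defined; together with the $T$-relations $(3)$, in particular $p_n^{n+2}=1$ and $p_n=x_{1^n}p_{n+1}$ and $p_nx_\emptyset=p_{n+1}^2$, one checks these collapse all $x_s$ and $p_n$ to $0$.)

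Next I would handle the $y$-generators. Relation $(4)$, $y_s=x_sy_{s0}y_{s10}^{-1}y_{s11}$, abelianizes to $y_s=x_s+y_{s0}-y_{s10}+y_{s11}$, and since all $x_s=0$ this reads $y_s=y_{s0}-y_{s10}+y_{s11}$. Relation $(5)$ gives $y_s=y_{s\cdot x_t}$ whenever $s\cdot x_t$ is defined, and relation $(6)$ gives $y_s=y_{s\cdot p_n}$ whenever $s\cdot p_n$ is defined; relation $(7)$ is automatically satisfied in an abelian group. The key point is that $T$ (via $F$, or via the $p_n$) acts transitively enough on the nonempty binary strings that all the $y_s$ with $s\neq\emptyset$ become identified to a single element, call it $a$; indeed relation $(5)$ with the $x$-prefix-replacement action already identifies $y_{s0}$ with $y_{s1}$-type shifts, and one checks that all $y_s$ for $s$ of a fixed length are identified, and then relation $(4)$ applied to, say, $y_0$ in terms of $y_{00},y_{010},y_{011}$ together with these identifications forces consistency. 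Concretely, once all nonempty-subscript $y_s$ equal $a$, relation $(4)$ for such an $s$ gives $a=a-a+a=a$, which is vacuous, so there is no further collapsing, while relation $(4)$ for $s=\emptyset$ gives $y_\emptyset=a-a+a=a$. Hence the abelianization is cyclic, generated by $a$, with image of every $y_s$ (including $y_\emptyset$) equal to $a$.

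Finally I would show this cyclic group is infinite, i.e., $a$ has infinite order, which is where the honest content lies. The cleanest route is to exhibit a surjective homomorphism $\widehat{S}\to\Z$ sending every $y_s$ to $1$: this is the restriction to $\widehat{S}$ of the character $\psi$ on $_yG_y$ recalled in Subsection~\ref{sec:chars}, extended by $0$ on all $p_n$. One must check that this is well defined, i.e., kills every relation in $\mathcal{R}$: relations $(1)$--$(3)$ involve only $x$'s and $p$'s so map to $0=0$; relation $(4)$ maps to $1 = 0 + 1 - 1 + 1$; relation $(5)$ maps to $1=1$; relation $(6)$ maps to $1+0 = 0+1$; relation $(7)$ maps to $2=2$. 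Since $\psi$ is nontrivial on $_yG_y\le\widehat{S}$ its extension is surjective onto $\Z$, so $a$ maps to $1$ and thus has infinite order; combined with the previous paragraph, $\widehat{S}/[\widehat{S},\widehat{S}]\cong\Z$ with every $y_s\mapsto 1$.

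\textbf{Main obstacle.} The step I expect to require the most care is the second paragraph: verifying that relations $(4)$, $(5)$, $(6)$ really do identify \emph{all} the $y_s$ (over all $s$, including the exceptional $s=\emptyset$ and the $s=0^n,1^n$ that are excluded from $_yG_y$'s $y$-generating set but present here) to a single class, with no residual relation among them beyond ``they are all equal.'' One has to be slightly careful that relation $(4)$, which expresses $y_s$ as an alternating combination of three strictly longer subscripts, does not accidentally impose $a=0$; the point is that the combination is $y_{s0}-y_{s10}+y_{s11}$, whose coefficient sum is $+1$, so after identification it is the tautology $a=a$. Everything else (perfectness of $T$, well-definedness of the extended $\psi$) is routine given the results already in the excerpt.
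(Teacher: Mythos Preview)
Your proposal is correct and follows essentially the same approach as the paper: use simplicity/perfectness of $T$ to kill the $x$- and $p$-generators, use the transitivity of the partial $T$-action on nonempty binary strings via relations $(5)$ and $(6)$ to identify all $y_s$ with $s\neq\emptyset$, use relation $(4)$ to pull in $y_\emptyset$, and then verify nontriviality. The only difference is in presentation: the paper's proof of this lemma handles nontriviality with the one-line remark ``it follows from `abelianizing' the relations that this image is nontrivial,'' whereas you spell out the explicit surjection to $\Z$ --- which is exactly what the paper does in the immediately following Lemma~\ref{lem:psi_is_ablnztn}.
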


\begin{proof}
First note that since $T$ is simple, it lies in the kernel of the abelianization. Next, since the partial action of $T$ on the set of non-empty finite binary sequences is transitive, using relations $(5),(6)$ we conclude that $y_s$ and $y_t$, $s,t\in \{0,1\}^\N\setminus\{\emptyset\}$ are conjugate by elements of $T$. This implies that their images are equal. From relation (4) we also get that the image of $y_\emptyset$ equals those of all the other $y_s$. It follows from ``abelianizing'' the relations that this image is nontrivial and generates a copy of $\Z$.
\end{proof}

Recall the homomorphism $\psi\colon _yG_y\to \Z$ defined in Subsection~\ref{sec:chars}.

\begin{lemma}\label{lem:psi_is_ablnztn}
The homomorphism $\psi$ extends to a homomorphism $\widehat{\psi}\colon \widehat{S}\to \Z$ with $T\le \ker(\widehat{\psi})$. Moreover, this coincides with the abelianization map $\widehat{S}\to \widehat{S}/[\widehat{S},\widehat{S}]=\Z$.
\end{lemma}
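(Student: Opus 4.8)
The plan is to exhibit $\widehat\psi$ directly on generators, check it is consistent with the presentation from Proposition~\ref{prop:presentation}, and then identify it with the abelianization map using Lemma~\ref{lem:abelianization}. Concretely, I would first define $\widehat\psi\colon \mathcal{X}\to\Z$ by setting $\widehat\psi(y_s)=1$ for all $s\in\{0,1\}^{<\N}$, $\widehat\psi(x_s)=0$ for all $s$, and $\widehat\psi(p_n)=0$ for all $n\ge 0$. To see that this descends to a homomorphism $\widehat S\to\Z$, I would run through the relation families $\mathcal{R}$ and check each one is sent to a triviality in $\Z$: relations $(1)$--$(3)$ involve only $x$- and $p$-generators, so both sides have $\widehat\psi$-value $0$; relation $(4)$, $y_s=x_sy_{s0}y_{s10}^{-1}y_{s11}$, has left side value $1$ and right side value $0+1-1+1=1$; relations $(5)$ and $(6)$ each move a single $y$ past an $x$ or $p$, so both sides have value $1$; relation $(7)$ is a commutation of two $y$'s, both sides value $2$. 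Hence $\widehat\psi$ is a well-defined homomorphism.

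Next I would verify that $\widehat\psi$ extends $\psi$: on $_yG_y$ the generators are the $x_s$ and $y_s$, and by the formulas in Subsection~\ref{sec:chars} we have $\psi(x_s)=0$ and $\psi(y_s)=1$ for $s\in\mathcal{L}$, which agrees with the values of $\widehat\psi$ on those generators; since $_yG_y$ is generated by these, $\widehat\psi|_{_yG_y}=\psi$. (One should note that $\psi$ as originally defined only assigned $\psi(y_s)=1$ for $s\in\mathcal{L}$, but on $_yG_y$ all $y_s$ for $s\in\{0,1\}^{<\N}$ are generators, and the relation $y_\emptyset=x_\emptyset y_0y_{10}^{-1}y_{11}$ together with the transitivity of the $T$-action forces $\psi(y_s)=1$ for every such $s$, so there is no inconsistency.) Also $\widehat\psi(p_n)=0$ for all $n$, and since $T$ is generated by $F$ together with the $p_n$, and $F\le {}_yG_y$ with $\psi(F)=0$, we get $T\le\ker(\widehat\psi)$.

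Finally, to identify $\widehat\psi$ with the abelianization map: by Lemma~\ref{lem:abelianization} we know $\widehat S/[\widehat S,\widehat S]\cong\Z$, generated by the common image of the $y_s$. The composite $\widehat S\twoheadrightarrow \widehat S/[\widehat S,\widehat S]\cong\Z$ sends each $y_s$ to a generator, and after fixing the isomorphism $\widehat S/[\widehat S,\widehat S]\cong\Z$ so that $y_\emptyset\mapsto 1$, this composite agrees with $\widehat\psi$ on the generating set $\mathcal{X}$ (both kill $x_s$ and $p_n$, both send $y_s$ to $1$), hence the two homomorphisms $\widehat S\to\Z$ coincide. There is essentially no obstacle here; the only point requiring a moment of care is the bookkeeping mentioned above, namely checking that the value of $\psi$ on those $y_s$ with $s$ of the form $0^n$ or $1^n$ — which are not generators of $G$ but are generators of $_yG_y$ — is indeed $1$ as well, so that ``$\widehat\psi$ extends $\psi$'' is unambiguous. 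Everything else is a routine relation-by-relation verification.
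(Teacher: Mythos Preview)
Your proposal is correct and follows essentially the same route as the paper: define the extension on generators, verify compatibility with the relations $\mathcal{R}$ from Proposition~\ref{prop:presentation}, and then invoke Lemma~\ref{lem:abelianization} to identify the result with the abelianization map. The paper's own proof is terser---it uses the simplicity of $T$ (together with $F\le\ker(\psi)$) to conclude $T\le\ker(\widehat\psi)$ rather than checking the $p$-relations by hand---but the substance is the same.

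One small correction to your parenthetical: when arguing that $\psi(y_s)=1$ for $s$ of the form $0^n$ or $1^n$ on $_yG_y$, you cannot appeal to the transitivity of the $T$-action, since $T$ is not a subgroup of $_yG_y$. The correct justification is purely internal to $_yG_y$: iterating relation~(4) gives $\psi(y_{0^n})=\psi(y_{0^{n+1}})$ and $\psi(y_{1^n})=\psi(y_{1^{n+1}})$ for all $n$, and the instance $y_\emptyset=x_\emptyset y_0 y_{10}^{-1}y_{11}$ then forces the common value to be $1$. With that fix, your argument goes through without issue.
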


\begin{proof}
Recall that $F\le \ker(\psi)$. Since $T$ is simple, it follows that the normal closure of $\ker(\psi)$ in $\widehat{S}$ contains $T$ as a subgroup. It follows immediately from the definition of the relations $\mathcal{R}$ and Lemma~\ref{lem:abelianization} that $\psi$ extends to a map $\widehat{\psi}$ on $\widehat{S}$ and this extension corresponds to the aforementioned abelianization map.
\end{proof}

\subsection{The group $S$}

Now we return our focus on the group $S$ from Definition~\ref{def:S}, and establish some facts about it. In particular we will prove that $S=\ker(\widehat{\psi})$ (Proposition~\ref{prop:kernel}).

\begin{lemma}\label{lem:pairs}
The element $y_sy_t^{-1}$ lies in $S$ for each independent pair $s,t\in \{0,1\}^{<\N}$.
\end{lemma}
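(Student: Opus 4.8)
The plan is to reduce everything to conjugates and finite products of the single element $y_{10}y_{110}^{-1}$, which lies in $S$ by Definition~\ref{def:S}. Two tools are used repeatedly: first, the conjugation relations~(5) and~(6), and their consequences for all of $T$, give $f^{-1}y_u f = y_{u\cdot f}$ whenever $f\in T$ and $u\cdot f$ is defined, so that conjugating $y_{10}y_{110}^{-1}$ by an $f\in T$ for which $10\cdot f$ and $110\cdot f$ are both defined yields $y_{10\cdot f}\,y_{110\cdot f}^{-1}\in S$; second, the telescoping identity $(y_ay_b^{-1})(y_by_c^{-1})=y_ay_c^{-1}$, valid with no commutation hypotheses.

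The first step is the \emph{adjacent case}: if $C_a$ and $C_b$ are distinct cylinders (basic clopen sets of $\{0,1\}^\N$) with $C_b$ immediately following $C_a$ in the natural cyclic order on $\{0,1\}^\N$, and $C_a\cup C_b\ne\{0,1\}^\N$, then $y_ay_b^{-1}\in S$, and hence also $y_by_a^{-1}=(y_ay_b^{-1})^{-1}\in S$. Here one uses that $\{C_{10},C_{110}\}$ occurs inside the complete antichain $\{0,10,110,111\}$ with $C_{110}$ immediately after $C_{10}$, the remaining two cylinders $C_0,C_{111}$ forming a single arc that may be subdivided arbitrarily finely; likewise $\{C_a,C_b\}$ extends to a complete antichain with $C_b$ immediately after $C_a$ and, thanks to the hypothesis $C_a\cup C_b\ne\{0,1\}^\N$, with a nonempty complementary arc. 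Subdividing the complementary arcs to make the two antichains have equal size, one builds $f\in T$ realizing the resulting cyclic-order-preserving, prefix-replacing bijection; it satisfies $10\cdot f=a$ and $110\cdot f=b$, so $y_ay_b^{-1}=f^{-1}(y_{10}y_{110}^{-1})f\in S$. This is just the ``pair'' strengthening of the transitive partial action of $T$ already invoked in the excerpt.

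For a general independent pair $s,t$ the cylinders $C_s,C_t$ are disjoint proper nonempty subsets of $\{0,1\}^\N$. If $C_s\cup C_t\ne\{0,1\}^\N$, choose one of its (one or two) complementary arcs; being a dyadic arc it is a disjoint union $C_{v_1}\sqcup\cdots\sqcup C_{v_p}$ of consecutively occurring cylinders with $C_{v_1}$ immediately after $C_s$ and $C_t$ immediately after $C_{v_p}$. If $p=0$ then $C_s,C_t$ are already adjacent and the adjacent case applies directly; if $p\ge1$ then
\[
y_sy_t^{-1}=(y_sy_{v_1}^{-1})(y_{v_1}y_{v_2}^{-1})\cdots(y_{v_{p-1}}y_{v_p}^{-1})(y_{v_p}y_t^{-1})
\]
telescopes, and each factor is of adjacent type with the two cylinders not covering $\{0,1\}^\N$ (a covering would force $C_s$, $C_t$, or some $C_{v_i}$ to be empty), so every factor lies in $S$ by the first step, giving $y_sy_t^{-1}\in S$. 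The one remaining case is $C_s\cup C_t=\{0,1\}^\N$, i.e.\ $\{s,t\}=\{0,1\}$; here relation~(4) at the word $1$ gives $y_1=x_1y_{10}y_{110}^{-1}y_{111}$, whence $y_0y_1^{-1}=(y_0y_{111}^{-1})(y_{110}y_{10}^{-1})x_1^{-1}$, a product of $x_1^{-1}\in T\subseteq S$ with $(y_{111}y_0^{-1})^{-1}\in S$ and $(y_{10}y_{110}^{-1})^{-1}\in S$, both by the first step; and $y_1y_0^{-1}$ is the inverse.

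The main obstacle is purely organizational: making precise the cyclic order on $\{0,1\}^\N$, the notion of a complementary arc of a finite set of cylinders, and the construction of an element of $T$ matching two equal-size complete antichains in a prescribed cyclic fashion. None of this is deep—it is the standard combinatorics underlying Thompson's group $T$—but it must be set up carefully; once it is in place, the adjacent case, the telescoping identity, and the single exceptional computation complete the proof.
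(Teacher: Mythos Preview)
Your argument is correct. The paper's proof differs chiefly in how it handles the non-adjacent case: rather than telescoping through a chain of adjacent pairs, it manufactures a second model element for non-consecutive pairs via the single computation
\[
(y_{10}y_{110}^{-1})\cdot x_\emptyset^{-1}(y_{10}y_{110}^{-1})x_\emptyset \;=\; y_{10}y_{110}^{-1}\cdot y_{110}y_{1110}^{-1} \;=\; y_{10}y_{1110}^{-1}\in S,
\]
and then invokes transitivity of the partial action of $T$ on \emph{non-consecutive} independent pairs to conjugate this to an arbitrary $y_sy_t^{-1}$ with $s,t$ non-consecutive. (The paper's ``consecutive'' is your linearly-adjacent notion; your cyclic version also picks up the wrap-around pairs $\{1^m,0^n\}$, which the paper simply absorbs into its non-consecutive case.) Your telescoping trades one thing for another: you need only the single transitivity fact for adjacent pairs, at the cost of decomposing a complementary arc into cylinders and running a chain; the paper needs two separate transitivity statements but no chain. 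Both routes are short, and both dispatch $\{s,t\}=\{0,1\}$ via relation~(4)---the paper leaves that step as an exercise, and your explicit computation $y_0y_1^{-1}=(y_0y_{111}^{-1})(y_{110}y_{10}^{-1})x_1^{-1}$ is exactly what is intended there.
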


\begin{proof}
We consider two cases.

{\bf Case $1$}: First assume $s,t$ are consecutive and satisfy that $\{s,t\}\ne \{0,1\}$. The partial action of $T$ on consecutive pairs $\{s,t\}$ satisfying $\{s,t\}\ne \{0,1\}$ is transitive. Hence there exists an element $f\in T$ such that $f^{-1} y_{10}y_{110}^{-1}f = y_{10\cdot f}y_{110\cdot f}^{-1}=y_sy_t^{-1}$.

{\bf Case $2$} Next assume $s,t$ are not consecutive. Note that $y_{10}y_{1110}^{-1}\in S$ since
\[
y_{10}y_{110}^{-1} x_\emptyset^{-1} y_{10}y_{110}^{-1} x_\emptyset = y_{10}y_{1110}^{-1} \text{.}
\]
The partial action of $T$ on non-consecutive pairs of sequences is transitive. Hence there exists an element $f\in T$ such that $f^{-1} y_{10}y_{1110}^{-1}f=y_{10\cdot f}y_{1110\cdot f}^{-1}=y_sy_t^{-1}$.

The remaining case is when $\{s,t\}=\{0,1\}$, and we leave it as a pleasant exercise for the reader to verify that $y_0y_1^{-1}\in S$. This uses the relation $y_1=x_1y_{10}y_{110}^{-1}y_{111}$.
\end{proof}

\begin{lemma}\label{lem:balancedword0}
A word of the form $y_{1^{m+1}0}^{s_1}y_{1^{m+2}0}^{s_2}\dots y_{1^{m+n}0}^{s_n}$ represents an element of $S$ whenever $\sum_{1\le i\le n}s_i=0$.
\end{lemma}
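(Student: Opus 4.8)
The plan is to induct on $n$, using the previous Lemma~\ref{lem:pairs} as the base case and a telescoping trick to handle the inductive step. The key observation is that a word $w = y_{1^{m+1}0}^{s_1}y_{1^{m+2}0}^{s_2}\cdots y_{1^{m+n}0}^{s_n}$ with $\sum_i s_i = 0$ can be rewritten, using the commutativity relation $(7)$ (the sequences $1^{m+i}0$ for distinct $i$ are pairwise independent), as a product of ``balanced blocks'' of the form $y_a y_b^{-1}$ with $a,b$ among the $1^{m+j}0$. Concretely, since the partial sums $\sigma_k = \sum_{i\le k} s_i$ start and end at $0$, one can pair up each $+1$ contribution with a later (or earlier) $-1$ contribution so that $w$ becomes a product of factors $y_{1^{m+i}0}\, y_{1^{m+j}0}^{-1}$; each such factor lies in $S$ by Lemma~\ref{lem:pairs} because $1^{m+i}0$ and $1^{m+j}0$ form an independent pair. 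Since $S$ is a group, the product lies in $S$.

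First I would make the reduction precise: because the generators appearing in $w$ pairwise commute, $w$ depends only on the multiset of signed exponents, so I may reorder freely. I would then argue by induction on $n$. For $n=1$ the hypothesis forces $s_1 = 0$, so $w$ is the identity, which lies in $S$. For the inductive step, since the $s_i$ are $\pm 1$ (or we may assume so after splitting $y_a^{s}$ into $|s|$ copies using commutativity) and sum to zero, there must be an adjacent (after reordering) pair of opposite sign, say indices $i$ and $j$ with $s_i = 1$, $s_j = -1$. Pull the factor $y_{1^{m+i}0}\,y_{1^{m+j}0}^{-1}$ out front using relation $(7)$; it lies in $S$ by Lemma~\ref{lem:pairs}. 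The remaining word is again of the prescribed form with $n-2$ (or fewer) surviving letters and balanced exponent sum, so by induction it lies in $S$; hence $w$ does too.

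The main obstacle, and the only place requiring genuine care, is the bookkeeping that turns ``exponent sum zero'' into ``product of independent $\pm$-pairs'': one must be sure that after extracting a pair and collapsing, the leftover letters still have subscripts of the form $1^{m'+k}0$ for a suitable $m'$ and still pairwise independent — but this is automatic since we never introduce new subscripts, only delete two of the original ones, and any sub-collection of $\{1^{m+1}0,\dots,1^{m+n}0\}$ is still a set of pairwise independent sequences (none is a prefix of another). A secondary point is the normalization step replacing $y_a^{s}$ by $s$ copies of $y_a^{\pm 1}$; this is harmless because $y_a$ commutes with itself and the exponent sum is unchanged. With these two routine verifications in place, the induction goes through cleanly, and no appeal to the explicit action on $\{0,1\}^\N$ is needed beyond what is already packaged in Lemma~\ref{lem:pairs}.
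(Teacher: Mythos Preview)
Your proposal is correct and is essentially the same argument as the paper's: the paper observes that by Lemma~\ref{lem:pairs} each $y_{1^{m+i}0}^{\pm 1}y_{1^{m+j}0}^{\mp 1}$ lies in $S$ and then simply asserts that any balanced word of the given shape is a product of such pairs, leaving the verification as ``straightforward''. You have supplied exactly that verification, making explicit the use of commutativity (relation~(7), valid since the $1^{m+i}0$ are pairwise independent) and the pairing-off induction; the only cosmetic point is that your induction is really on the total length $\sum_i |s_i|$ rather than on $n$, as you yourself note when you split each $y_a^{s}$ into $|s|$ signed letters.
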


\begin{proof}
From Lemma~\ref{lem:pairs} it follows that any word of the form $y_{1^{m+i}0}^{\pm 1}y_{1^{m+j}0}^{\mp 1}$ lies in $S$ for each pair $i,j\in \{1,\dots,n\}$. It is straightforward to check that any word satisfying our hypothesis is a product of such words.
\end{proof}

\begin{lemma}\label{lem:balancedword1}
Let $y_{s_1}^{t_1}\dots y_{s_n}^{t_n}$ be a word satisfying:
\begin{enumerate}
\item $\sum_{1\leq i\le n}t_i=0$
\item $s_i\notin \{1^m\mid m\ge 0\}$ for each $1\le i\le n$.
\end{enumerate}
Then this word represents an element of $S$.
\end{lemma}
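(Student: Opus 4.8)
The plan is to rewrite the given word as a product of elements of the shape $y_s^{\pm1}y_t^{\mp1}$ with $s,t$ independent, each of which already lies in $S$ by Lemma~\ref{lem:pairs}. The crucial device is to introduce one auxiliary subscript $t$ that is independent from \emph{all} of $s_1,\dots,s_n$ simultaneously; hypothesis~(2) is exactly what makes such a choice possible.

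First I would pick an integer $N$ larger than the length of every $s_i$ and set $t\defeq 1^N0$. The prefixes of $1^N0$ are the all-ones strings $1^j$ with $j\le N$ together with $1^N0$ itself. None of the former equals any $s_i$ by hypothesis~(2), and the latter is too long to equal any $s_i$; symmetrically, $1^N0$ is too long to be a prefix of any $s_i$. Hence $t$ is independent from each $s_i$, so that $y_t$ commutes with every $y_{s_i}$ by relation~(7).

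Next, for each $i$ set $a_i\defeq y_{s_i}^{t_i}y_t^{-t_i}$. Since $y_{s_i}$ and $y_t$ commute, $a_i$ lies in the abelian subgroup $\langle y_{s_i},y_t\rangle$ and therefore commutes with $y_t$; moreover $a_i\in S$, because for $t_i=+1$ it is $y_{s_i}y_t^{-1}\in S$ by Lemma~\ref{lem:pairs}, and for $t_i=-1$ it is $(y_{s_i}y_t^{-1})^{-1}$, again in the subgroup $S$. Writing each factor as $y_{s_i}^{t_i}=a_iy_t^{t_i}$, the word in question becomes
\[
(a_1y_t^{t_1})(a_2y_t^{t_2})\cdots(a_ny_t^{t_n}).
\]
Pushing each power of $y_t$ to the right past the $a_j$'s --- allowed since every $a_j$ commutes with $y_t$ --- gathers them into a single factor $y_t^{t_1+\dots+t_n}$, which equals the identity by hypothesis~(1). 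Thus the word equals $a_1a_2\cdots a_n$, a product of elements of $S$, hence an element of $S$.

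No individual step is more than a one-line computation, so I do not expect a genuine obstacle. The only place where the hypotheses enter essentially is the construction of $t$: hypothesis~(2) is what guarantees that $1^N0$ (for large $N$) is independent from every $s_i$ --- this would fail if some $s_i$ were of the form $1^m$, since then $1^m$ would be a prefix of $1^N0$ --- while hypothesis~(1) is precisely what makes the accumulated power $y_t^{t_1+\dots+t_n}$ vanish at the end. Keeping track of which commutations from relation~(7) are available is the only bookkeeping involved.
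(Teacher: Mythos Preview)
Your argument is correct and in fact a bit cleaner than the paper's. Both proofs exploit hypothesis~(2) to find auxiliary subscripts of the form $1^M0$ independent from all the $s_i$, and then pair each $y_{s_i}^{t_i}$ with a compensating $y$-generator so that Lemma~\ref{lem:pairs} applies. The difference is that the paper introduces $n$ \emph{distinct} auxiliaries $1^{m+1}0,\dots,1^{m+n}0$, telescopes the product down to a word supported entirely on these, and then invokes the separate Lemma~\ref{lem:balancedword0} to finish. You instead use a \emph{single} auxiliary $t=1^N0$ throughout, so the accumulated $y_t$-factor is $y_t^{\sum t_i}=1$ and nothing further is needed; this bypasses Lemma~\ref{lem:balancedword0} entirely. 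The paper's route is slightly more modular, while yours is more direct. One small remark: you only argue $a_i\in S$ for $t_i=\pm1$, whereas the statement allows arbitrary integer exponents; but since $y_{s_i}$ and $y_t$ commute you have $a_i=(y_{s_i}y_t^{-1})^{t_i}$, so the general case follows immediately from the case $t_i=1$ and closure of $S$ under powers.
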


\begin{proof}
There is an $m\in \N$ such that $\{s_1,\dots,s_n\}$ lies in the complement of the infinite subtree rooted at $1^m$. We obtain
\begin{align*}
(y_{s_1}^{t_1}\dots y_{s_n}^{t_n}) (y_{s_n}^{-t_n} y_{1^{m+1}0}^{t_n}) (y_{s_{n-1}}^{-t_{n-1}}y_{1^{m+2}0}^{t_{n-1}})\dots (y_{s_1}^{-t_1}y_{1^{m+n}0}^{t_1})\\
=y_{1^{m+1}0}^{-t_n}\dots y_{1^{m+n}0}^{-t_1} \text{.}
\end{align*}
This is an element of $S$ thanks to Lemma \ref{lem:balancedword0}. Since each $y_{s_i}^{-t_i}y_{1^{m+n+1-i}0}^{t_i}$ is in $S$ by Lemma~\ref{lem:pairs}, we conclude that $y_{s_1}^{t_1}\dots y_{s_n}^{t_n}$ is in $S$.
\end{proof}

\begin{proposition}\label{prop:kernel}
We have $S=\ker(\widehat{\psi})$.
\end{proposition}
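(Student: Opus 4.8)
The plan is to prove both inclusions $S \subseteq \ker(\widehat{\psi})$ and $\ker(\widehat{\psi}) \subseteq S$. The first inclusion is the easy one: $S$ is generated by $T$ together with $y_{10}y_{110}^{-1}$, and by Lemma~\ref{lem:psi_is_ablnztn} we have $T \le \ker(\widehat{\psi})$, while $\widehat{\psi}(y_{10}y_{110}^{-1}) = 1 - 1 = 0$ by Lemma~\ref{lem:abelianization}. Hence all generators of $S$ lie in $\ker(\widehat{\psi})$, so $S \subseteq \ker(\widehat{\psi})$.

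For the reverse inclusion, I would take an arbitrary $g \in \ker(\widehat{\psi})$ and, using Lemma~\ref{lem:standardform}, write $g$ in standard form $g = f y_{s_1}^{t_1} \cdots y_{s_n}^{t_n}$ with $f \in T$. Since $f \in T \subseteq S$, it suffices to show the tail $w \defeq y_{s_1}^{t_1} \cdots y_{s_n}^{t_n}$ lies in $S$. Applying $\widehat{\psi}$ to $g$: since $\widehat{\psi}(f) = 0$ and $\widehat{\psi}(y_s) = 1$ for every $s$, we get $\widehat{\psi}(g) = \sum_i t_i$, so the hypothesis $g \in \ker(\widehat{\psi})$ gives exactly $\sum_{i=1}^n t_i = 0$. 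Now I would invoke Lemma~\ref{lem:balancedword1}: that lemma says a balanced product of $y$-generators whose subscripts avoid $\{1^m \mid m \ge 0\}$ represents an element of $S$. So the remaining issue is that some of the $s_i$ appearing in the standard form might be of the form $1^m$ (these are legitimate generators of $\widehat{S}$, unlike in $G$).

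The main obstacle, then, is handling the generators $y_{1^m}$. The clean fix is to use relation~(4), $y_{1^m} = x_{1^m} y_{1^m 0} y_{1^m 10}^{-1} y_{1^m 11}$, to rewrite each offending $y_{1^m}^{\pm 1}$ in terms of an element of $T$ (namely $x_{1^m}$, or its appropriate conjugate/power, which lies in $F \subseteq T \subseteq S$) together with $y$-generators whose subscripts $1^m 0$, $1^m 10$, $1^m 11$ are \emph{not} of the form $1^k$. One should check that this substitution preserves the signed count $\sum t_i$ (it does: it replaces a single $y$-generator of total exponent $\pm 1$ by three $y$-generators of total exponent $\pm 1$, plus a $T$-element which does not affect $\widehat\psi$), so after absorbing the $T$-part into $f$ — or rather, after multiplying $g$ on appropriate sides by elements of $S$ — we reduce to a word $w'$ in $y$-generators all of whose subscripts avoid $\{1^m\}$ and with $\sum$ of exponents still $0$. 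Then Lemma~\ref{lem:balancedword1} finishes the argument, giving $w' \in S$ and hence $g \in S$.

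I would write this up by first noting $S \subseteq \ker(\widehat\psi)$, then for $g \in \ker(\widehat\psi)$ reducing to standard form, peeling off the $T$-part, using $\widehat\psi(g)=0$ to get the balance condition on exponents, then applying relation~(4) finitely many times to eliminate all subscripts of the form $1^m$ (each application modifying $g$ by right-multiplication by an element known to lie in $S$, keeping track that exponent sums stay balanced), and finally citing Lemma~\ref{lem:balancedword1}. The one subtlety to get right is the bookkeeping in the elimination step: relation~(4) introduces an $x$-generator and three new $y$-generators, and one must verify both that the new $y$-subscripts are eventually all outside $\{1^m\}$ (true after at most one round, since $1^m 0$, $1^m 10$, $1^m 11$ each contain a $0$) and that the running exponent sum is unchanged, so that the hypotheses of Lemma~\ref{lem:balancedword1} hold at the end.
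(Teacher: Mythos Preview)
Your overall architecture matches the paper's proof: both inclusions, standard form, peel off $f\in T$, use $\widehat\psi(g)=0$ to get $\sum t_i=0$, and then aim to invoke Lemma~\ref{lem:balancedword1} once all subscripts of the form $1^m$ have been eliminated. The divergence is in how you eliminate those subscripts, and that is where your argument breaks.

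You propose to apply relation~(4), $y_{1^m}=x_{1^m}y_{1^m0}y_{1^m10}^{-1}y_{1^m11}$, and you assert that the three new subscripts $1^m0$, $1^m10$, $1^m11$ ``each contain a $0$''. This is false: $1^m11=1^{m+2}$ is again a pure string of $1$'s. So one application of relation~(4) trades $y_{1^m}$ for (among other things) $y_{1^{m+2}}$, and iterating never terminates. The ``one round'' claim is exactly the gap; without it you never reach a word to which Lemma~\ref{lem:balancedword1} applies. A secondary issue is that the $x_{1^m}$ produced by relation~(4) sits in the middle of the $y$-word, and pushing it left to be absorbed into $f$ (via relation~(5)) alters all the intervening subscripts, so the bookkeeping is less innocent than you suggest; but this is fixable, whereas the non-termination is not.

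The paper avoids this entirely. Because the word is in standard form, the problematic generators $y_{1^{k_1}}^{j_1}\cdots y_{1^{k_l}}^{j_l}$ sit at the right end. The paper then right-multiplies by the elements $(y_{1^{k_l}}^{-j_l}y_0^{j_l})\cdots(y_{1^{k_1}}^{-j_1}y_0^{j_1})$, each of which lies in $S$ by Lemma~\ref{lem:pairs} (since $1^{k}$ and $0$ are independent). After commuting and cancelling, every $y_{1^k}$ has been swapped for a $y_0$, and since $0\notin\{1^m\mid m\ge 0\}$ the resulting $y$-word satisfies the hypotheses of Lemma~\ref{lem:balancedword1}. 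That is the missing idea: trade each $y_{1^k}^{\pm 1}$ for $y_0^{\pm 1}$ via an element of $S$, rather than trying to expand it with relation~(4).
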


\begin{proof}
It is clear that $S\le \ker(\widehat{\psi})$ from the definitions. Recall that any element in $\ker(\widehat{\psi})$ can be represented by a standard form
\[
fy_{s_1}^{t_1}\dots y_{s_n}^{t_n}\qquad f\in T, \sum_{1\le i\le n} t_i=0
\]
using the relations. Since we know that $f\in T \le S$, it suffices to show that $y_{s_1}^{t_1}\dots y_{s_n}^{t_n}\in S$. 

Since $y_{s_1}^{t_1}\dots y_{s_n}^{t_n}$ is a standard form, by definition it must look like:
\[
(y_{p_1}^{q_1}\dots y_{p_m}^{q_m} ) (y_{1^{k_1}}^{j_1}\dots y_{1^{k_l}}^{j_l})\qquad p_1,\dots,p_n\notin \{1^k\mid k\ge 0\} \text{.}
\]
Upon multiplying by finitely many words of the form described in Lemma~\ref{lem:pairs}, we obtain
\[
(y_{p_1}^{q_1}\dots y_{p_m}^{q_m} ) (y_{1^{k_1}}^{j_1}\dots y_{1^{k_l}}^{j_l}) (y_{1^{k_l}}^{-j_l}y_{0}^{j_l})\dots (y_{1^{k_1}}^{-j_1}y_{0}^{j_1}) \text{.}
\]
Upon applying a sequence of relations of the form $y_sy_t=y_ty_s$ (where $s,t$ are independent) and $y_sy_s^{-1}=1$ we obtain a $y$-word that satisfies the hypothesis of Lemma~\ref{lem:balancedword1}, and hence lies in $S$. It follows that our original word must lie in $S$.
\end{proof}

We will use the fact that $S=\ker(\widehat{\psi})$ to prove Theorem~\ref{thrm:circle}. Our strategy is to first prove that $\widehat{S}$ is of type $\F_\infty$, and then prove that $[\pm\widehat{\psi}]\in\Sigma^\infty(\widehat{S})$, which by Theorem~\ref{thrm:bnsr_fin_props} will imply that $S=\ker(\widehat{\psi})$ is of type $\F_\infty$. As a remark, by Lemma~\ref{lem:psi_is_ablnztn} we have $S=[\widehat{S},\widehat{S}]$, and so our work will in fact show that $\Sigma(\widehat{S})=\Sigma^\infty(\widehat{S})$.

\subsection{The proof that $\widehat{S}$ is of type $\F_\infty$}\label{sec:HatSFinfty}

In this subsection we prove that $\widehat{S}$ is of type $\F_\infty$. Let
\[
Z\defeq \{s0^\infty\mid s\in \{0,1\}^{<\N}\}
\]
be the set of all points in $\{0,1\}^\N$ that are eventually constant $0$s. Since every $x_s$, $y_s$, and $p_n$ stabilizes $Z$, the group $\widehat{S}$ stabilizes $Z$. Let $\Delta(Z)$ be the simplex on $Z$, i.e., the contractible simplicial complex with vertex set $Z$ and a $k$-simplex for each $(k+1)$-element subset $\{s_0 0^\infty,\dots,s_k 0^\infty\}\subseteq Z$. At this point we have a contractible simplicial complex $\Delta(Z)$ on which the group $\widehat{S}$ acts, and our goal is to understand this action.

To study this action we shall need the following alternative description of $\widehat{S}$. We identify the circle $\mathbb{S}^1$ with the real projective line $\R\cup \{\infty\}$. Recall the following map $\Phi$ from \cite{lodha16}. (We will state some basic facts about this map and refer the reader to this reference for more details.)
\[
\Phi\colon \{0,1\}^\N\to \R\cup \{\infty\}
\]
\[
\Phi(00^{n_1}1^{n_2}0^{n_3}\dots)\mapsto -(n_1+\frac{1}{n_2+\frac{1}{n_3+\frac{1}{n_4+\dots}}})\qquad \Phi(11^{n_1}0^{n_2}1^{n_3}\dots)\mapsto n_1+\frac{1}{n_2+\frac{1}{n_3+\frac{1}{n_4+\dots}}}
\]
The map $\Phi$ satisfies that the inverse image of each irrational is unique, the inverse image of each rational is a pair of the form $\{s01^\infty,s10^\infty\}$, and the inverse image of $\infty$ is $\{0^\infty,1^\infty\}$.

The map $\Phi$ conjugates the group $T\le \Homeo(\{0,1\}^\N)$ to the group of piecewise $\PSL_2(\Z)$-projective homeomorphisms of $\mathbb{S}^1$ with breakpoints in $\Q$ (finitely many allowable, for each element). The elements $y_s$ are conjugated to piecewise $\PSL_2(\Z[\frac{1}{\sqrt{2}}])$-projective homeomorphisms. This circle action will be a somewhat more natural setting to formulate our argument below. We remark that in this description of the group it is immediate that $\PSL_2(\Z)$ is naturally a subgroup of $T$, and so the action of $T$ on $\Q\cup \{\infty\}$ is transitive. The stabilizer of $\infty$ is isomorphic to Thompson's group $F$. Indeed, this action is topologically conjugate to the piecewise linear action of $F$.

In this model, we obtain the following equivalent description of $\Delta(Z)$. The restriction of $\Phi$ to $Z$ induces a bijection onto $\Q\cup \{\infty\}$. For simplicity of notation, we also denote by $Z$ the set $\Q\cup \{\infty\}$. Again, $\Delta(Z)$ is the simplex on $Z$, i.e., the contractible simplicial complex with vertex set $Z$ and a $k$-simplex for each $(k+1)$-subset of $Z$.

It will be convenient to write the elements of finite subsets of $Z$ in a circular ordering. An ordered $n$-tuple $(r_1,\dots,r_n)$ of distinct points $r_1,\dots,r_n\in \R$ is said to be \emph{linearly ordered} if $r_1<\dots<r_n$. An ordered $n$-tuple $(r_1,\dots,r_n)$ of distinct points $r_1,\dots,r_n\in \mathbb{S}^1=\R\cup \{\infty\}$ is said to be \emph{circularly ordered} if the following holds:
\begin{enumerate}
\item If $\infty\notin \{r_1,\dots,r_n\}$ then $r_1<\dots<r_n$.
\item If $r_i=\infty$, then
\[
r_{i+1}<r_{i+2}<\dots<r_n<r_1<r_2\dots<r_{i-1} \text{.}
\]
\end{enumerate}

\begin{lemma}\label{lem:cocpt_filt}
The piecewise projective action of $T$ on $\mathbb{S}^1=\R\cup \{\infty\}$ is transitive on the set of circularly ordered $n$-tuples $\{(r_1,\dots,r_n)\mid r_1,\dots,r_n\in \Q\cup\{\infty\}\}$ for each $n\in \mathbb{N}$. Hence the action of $\widehat{S}$ on $\Delta(Z)$ has one orbit of simplices in each dimension.
\end{lemma}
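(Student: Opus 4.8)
The plan is to prove the transitivity statement for $T$ first, and then derive the statement about $\Delta(Z)$ as an immediate corollary. The key structural fact is that via $\Phi$, the group $T$ acts on $\mathbb{S}^1 = \R \cup \{\infty\}$ as the Thompson-like group of piecewise $\PSL_2(\Z)$-projective homeomorphisms with rational breakpoints, and this contains $\PSL_2(\Z)$ itself (acting by genuine Möbius transformations) as well as all of Thompson's group $F$ in its standard guise. So I have at my disposal both the homogeneity of the $\PSL_2(\Z)$-action on $\Q \cup \{\infty\}$ and the flexibility of $F$-type elements that can rearrange finitely many rational points at will.

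First I would reduce to showing that any circularly ordered $n$-tuple $(r_1,\dots,r_n)$ of points in $\Q \cup \{\infty\}$ can be carried by an element of $T$ to a single fixed reference tuple, say the tuple $(\infty, 0, 1, 2, \dots, n-1)$ or some convenient choice. Since $\PSL_2(\Z) \le T$ acts transitively on $\Q \cup \{\infty\}$, I can first move $r_1$ to $\infty$; because $\PSL_2(\Z)$ acts by orientation-preserving homeomorphisms of the circle, it preserves circular order, so the image tuple is still circularly ordered with first entry $\infty$, which by the definition of circular ordering means the remaining entries are now genuinely linearly ordered rationals $q_2 < q_3 < \dots < q_n$ in $\R$. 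Now the problem is purely a statement about the action of $T$ on the line: given any linearly ordered tuple of rationals, find an element of $T$ fixing $\infty$ (i.e., an element of the copy of $F$) sending it to $(0,1,\dots,n-1)$. This is the standard transitivity of Thompson's group $F$ on linearly ordered finite tuples of "rational" points (here the appropriate dense set of breakpoint-admissible points), which is a well-known elementary fact proved by subdividing both tuples into dyadic-type pieces and matching them up affinely. I would invoke this, citing the standard $F$-facts already alluded to in the excerpt, rather than reproving it.

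For the second sentence of the lemma, I would observe that a $k$-simplex of $\Delta(Z)$ is just an unordered $(k+1)$-element subset of $Z = \Q \cup \{\infty\}$, and any such subset can be written as a circularly ordered $(k+1)$-tuple (unique up to cyclic rotation and, depending on conventions, reflection — but at worst this is a finite ambiguity that does not affect orbit-transitivity). Since $\widehat{S} \supseteq T$ and $T$ already acts transitively on circularly ordered $(k+1)$-tuples of points of $Z$ by the first part, $\widehat{S}$ acts transitively on the $k$-simplices of $\Delta(Z)$; that is, there is exactly one orbit in each dimension. I would note that $\widehat{S}$ genuinely stabilizes $\Delta(Z)$ because each generator $x_s$, $y_s$, $p_n$ stabilizes the set $Z$, as already recorded just before the lemma.

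The main obstacle, such as it is, is making sure the "standard transitivity of $F$ on finite ordered tuples" is applied to the right dense subset of $\R$ — namely the image under $\Phi$ of the eventually-$0$ sequences, which is exactly $\Q$, and checking that the piecewise-$\PSL_2(\Z)$ model of $T$ does act transitively on ordered tuples of rationals (as opposed to, say, only dyadics). This is where the fact that $\PSL_2(\Z) \le T$ is doing real work: it upgrades the dyadic-looking transitivity of a bare $F$-action to full transitivity on $\Q \cup \{\infty\}$. I would make this dependence explicit and otherwise treat the combinatorial matching-up of linearly ordered tuples as routine.
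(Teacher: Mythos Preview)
Your approach is correct and essentially identical to the paper's: reduce to the two claims that $\PSL_2(\Z)\le T$ acts transitively on $\Q\cup\{\infty\}$ and that $F=\Stab_T(\infty)$ acts transitively on linearly ordered $n$-tuples of rationals, then deduce the simplex statement. One clarification: the reason $F$ in the projective model acts transitively on rational (not merely dyadic) tuples is not that $\PSL_2(\Z)$ ``upgrades'' dyadic transitivity, but rather that the conjugacy $\Xi\colon(0,1)\to\R$ between the piecewise linear and piecewise projective models of $F$ carries $\Z[\tfrac12]\cap(0,1)$ bijectively onto $\Q$ --- the paper makes this explicit, citing Burillo's notes.
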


\begin{proof}
This will follow from the following two claims:
\begin{enumerate}
\item The action of $T$ on $\Q\cup \{\infty\}$ is transitive.
\item The action of $F$ on $\Q$ is transitive on linearly ordered $n$-tuples.
\end{enumerate}
The first claim is true since $\PSL_2(\Z)$ acts transitively on $\Q\cup \{\infty\}$ and $\PSL_2(\Z)\le T$.

To see that the second claim is true, we recall the following. There is a homeomorphism $\Xi\colon (0,1)\to \R$ that conjugates the piecewise linear action of $F$ on $(0,1)$ to the piecewise projective action of $F$ on $\R$. This homeomorphism restricts to a bijection between the sets $\Z[\frac{1}{2}]\cap (0,1)$ and $\Q$. For an elegant and detailed proof of this we refer the reader to the notes of Jose Burillo on the Lodha-Moore groups; see \cite{Burillo}. Now it is a classical fact that the piecewise linear action of $F$ on linearly ordered $n$-tuples in $\Z[\frac{1}{2}]\cap (0,1)$ has the required transitivity property, and so the second claim follows.
\end{proof}

Our next goal is to understand stabilizers in $\widehat{S}$ of simplices in $\Delta(Z)$.

\begin{lemma}\label{lem:stabs1}
Let $q\in \Q\cup \{\infty\}$. The stabilizer of $q$ in $\widehat{S}$ is isomorphic to $_yG_y$.
\end{lemma}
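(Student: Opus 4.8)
The plan is to reduce to the case $q=\infty$ by transitivity, and then identify the stabilizer explicitly with $_yG_y$. By Lemma~\ref{lem:cocpt_filt} (the $n=1$ case), $\widehat{S}$ acts transitively on $\Q\cup\{\infty\}$, so all point stabilizers are conjugate, and it suffices to compute $\Stab_{\widehat{S}}(\infty)$. Under the semiconjugacy $\Phi$, the point $\infty\in\R\cup\{\infty\}$ corresponds to the pair $\{0^\infty,1^\infty\}\subseteq\{0,1\}^\N$, so working in the Cantor set model, $\Stab_{\widehat{S}}(\infty)$ is the subgroup of $\widehat{S}$ consisting of those homeomorphisms that fix the set $\{0^\infty,1^\infty\}$. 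In fact I would first argue it fixes each of $0^\infty$ and $1^\infty$ individually: an element fixing $\infty$ but swapping $0^\infty\leftrightarrow1^\infty$ would be orientation-reversing on a neighborhood of $\infty$, which is impossible since $\widehat{S}\le\Homeo^+(\mathbb{S}^1)$ (every generator $x_s$, $y_s$, $p_n$ is orientation-preserving); alternatively one can see this directly from the combinatorics of the generators on $\{0,1\}^\N$.

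The key step is then to show that the subgroup of $\widehat{S}$ fixing both $0^\infty$ and $1^\infty$ is exactly $_yG_y$. One inclusion is immediate: every generator $x_s$ and $y_s$ of $_yG_y$ fixes $0^\infty$ and $1^\infty$ (for $x_s,y_s$ with $s\ne\emptyset$ this is clear since they act as the identity outside the cone on $s$, and for $x_\emptyset,y_\emptyset$ one checks directly that $0^\infty$ and $1^\infty$ are fixed points), so $_yG_y\le\Stab_{\widehat{S}}(\infty)$. For the reverse inclusion I would use the infinite presentation and the standard form machinery from Proposition~\ref{prop:presentation} and Lemma~\ref{lem:standardform}: any $g\in\widehat{S}$ can be written in standard form $fy_{s_1}^{t_1}\cdots y_{s_n}^{t_n}$ with $f\in T$. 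Since every $y$-word fixes $0^\infty$ and $1^\infty$, if $g$ fixes these two points then $f\in T$ must also fix $0^\infty$ and $1^\infty$; but it is recalled in the excerpt (in the $\Phi$-model discussion, and in the proof of Proposition~\ref{prop:presentation}) that the stabilizer in $T$ of $\{0^\infty,1^\infty\}$ is precisely Thompson's group $F\le {}_yG_y$. Hence $g=fy_{s_1}^{t_1}\cdots y_{s_n}^{t_n}\in\langle F, y_s : s\in\{0,1\}^{<\N}\rangle={}_yG_y$, giving $\Stab_{\widehat{S}}(\infty)\le{}_yG_y$ and therefore equality.

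The main obstacle I anticipate is the careful bookkeeping around $0^\infty$ and $1^\infty$ versus the full point stabilizer: one must be sure that ``fixes $\infty$'' really does translate to ``fixes both $0^\infty$ and $1^\infty$'' and not merely the set $\{0^\infty,1^\infty\}$, since this is what makes Thompson's $F$ — rather than some index-two overgroup — appear. The orientation-preserving argument handles this cleanly, but it requires knowing $\widehat{S}\le\Homeo^+(\mathbb{S}^1)$, which follows from the explicit formulas for $x_s$, $y_s$, and $p_n$. The remaining ingredients — transitivity, standard forms, and the identification of $\Stab_T(\{0^\infty,1^\infty\})$ with $F$ — are all either already established in the excerpt or routine.
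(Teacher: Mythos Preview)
Your proposal is correct and follows essentially the same argument as the paper: reduce to $q=\infty$ by transitivity of $T$, observe $_yG_y\le\Stab_{\widehat{S}}(\infty)$, and for the reverse inclusion use the standard form $g=fy_{s_1}^{t_1}\cdots y_{s_n}^{t_n}$ together with $\Stab_T(\infty)=F$ to force $f\in F$ and hence $g\in{}_yG_y$. The only cosmetic difference is that the paper stays in the circle model throughout (where $\infty$ is a single point, so your swap-of-$0^\infty$-and-$1^\infty$ worry never arises), whereas you detour through the Cantor set model and invoke orientation-preservation to rule out the swap; this extra care is fine but unnecessary if one argues directly on $\mathbb{S}^1$.
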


\begin{proof}
Note that since $T\le \widehat{S}$, and since the action of $T$ on $\Q\cup \{\infty\}$ is transitive, it follows that the stabilizer of $q$ in $\widehat{S}$ is isomorphic to the stabilizer of $\infty$ in $\widehat{S}$. Thus it suffices to show that $\Stab_{\widehat{S}}(\infty)=~_yG_y$.

It is clear that $_yG_y$ is a subgroup of $\Stab_{\widehat{S}}(\infty)$. Now let $g\in \Stab_{\widehat{S}}(\infty)$. Recall from Lemma~\ref{lem:standardform} that we can write $g$ in the form $g=fy_{s_1}^{t_1}\dots y_{s_n}^{t_n}$ such that $f\in T$ is a word in the $x$- and $p$-generators. Since $\infty\cdot y_{s_1}^{t_1}\dots y_{s_n}^{t_n}=\infty$, it follows that $\infty\cdot f=\infty$.
Since the stabilizer of $\infty$ in $T$ is precisely the group $F$, it follows that $f\in F$, and so can be reduced to a word in the $x$-generators using the relations in $T$. Since any word of the form $f y_{s_1}^{t_1}\dots y_{s_n}^{t_n}$ for $f\in F$ represents an element in $_yG_y$, we are done.
\end{proof}

Recall that, given a group action of a group $\Gamma$ by homeomorphisms on the circle and an open set $I\subset \mathbb{S}^1$, the \emph{rigid stabilizer} of $I$ in the group is the subgroup consisting of elements that pointwise fix the complement of $I$. This is denoted as $\RStab_\Gamma(I)$.

\begin{lemma}\label{lem:stabs2}
There is an isomorphism $\xi\colon \RStab_{\widehat{S}}([0,1])\to~_yG_y$ such that
\[
\psi\circ \xi = \widehat{\psi}\restriction \textup{RStab}_{\widehat{S}}([0,1]) \text{.}
\]
\end{lemma}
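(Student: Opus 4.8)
The plan is to realize $\RStab_{\widehat{S}}([0,1])$ as a renormalized copy of ${}_yG_y$ inside $\widehat{S}$, using the map $\Phi$ to pass between the circle and $\{0,1\}^{\N}$. Since $[0,1]$ is not open, $\RStab_{\widehat{S}}([0,1])$ is understood here as the group of $g\in\widehat{S}$ fixing $\mathbb{S}^{1}\setminus(0,1)$ pointwise, equivalently the elements of $\widehat{S}$ supported in $[0,1]$. Write $C_{w}=\{w\eta\mid\eta\in\{0,1\}^{\N}\}$ for the cylinder determined by $w\in\{0,1\}^{<\N}$. A direct computation with the continued-fraction formula for $\Phi$ shows that $\Phi^{-1}\bigl((0,1)\bigr)=C_{10}\setminus\{10\,0^{\infty},\,10\,1^{\infty}\}$ with $\Phi(10\,0^{\infty})=0$ and $\Phi(10\,1^{\infty})=1$, so that $\Phi^{-1}\bigl(\mathbb{S}^{1}\setminus(0,1)\bigr)=\bigl(\{0,1\}^{\N}\setminus C_{10}\bigr)\cup\{10\,0^{\infty},\,10\,1^{\infty}\}$. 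Let $\iota_{10}\colon\Homeo(\{0,1\}^{\N})\to\Homeo(\{0,1\}^{\N})$ be the renormalization embedding: $\iota_{10}(h)$ fixes every $\xi\notin C_{10}$ and sends $10\eta\mapsto 10(\eta\cdot h)$. I will show that $\iota_{10}$ restricts to an isomorphism ${}_yG_y\xrightarrow{\ \sim\ }\RStab_{\widehat{S}}([0,1])$, and take $\xi$ to be its inverse.

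The first step is the inclusion $\iota_{10}({}_yG_y)\subseteq\RStab_{\widehat{S}}([0,1])$, together with the injectivity of $\iota_{10}$ on ${}_yG_y$. Directly from the recursive definitions one has $\iota_{10}(x_{s})=x_{10s}$ and $\iota_{10}(y_{s})=y_{10s}$ for every $s\in\{0,1\}^{<\N}$; since ${}_yG_y$ uses \emph{all} the generators $y_{s}$, every subscript $10s$ arising here fails to be of the form $0^{n}$ or $1^{n}$, so each $y_{10s}$ is a genuine generator of $\widehat{S}$ --- this is precisely why the lemma is stated for ${}_yG_y$ rather than $G$. Hence $\iota_{10}({}_yG_y)=\langle x_{10s},y_{10s}\mid s\in\{0,1\}^{<\N}\rangle\le\widehat{S}$. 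Each such generator is supported in $C_{10}$; moreover, because every element of ${}_yG_y$ fixes $0^{\infty}$ and $1^{\infty}$, the homeomorphism $\iota_{10}(h)$ also fixes $10\,0^{\infty}$ and $10\,1^{\infty}$, so by the description of $\Phi^{-1}(\mathbb{S}^{1}\setminus(0,1))$ above it fixes $\mathbb{S}^{1}\setminus(0,1)$ pointwise and therefore lies in $\RStab_{\widehat{S}}([0,1])$. Finally $\iota_{10}$ is injective, as it identifies $\Homeo(\{0,1\}^{\N})$ with the group of homeomorphisms supported in the clopen cylinder $C_{10}$. Thus $\iota_{10}$ restricts to an embedding ${}_yG_y\hookrightarrow\RStab_{\widehat{S}}([0,1])$.

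The second step, which I expect to be the main obstacle, is surjectivity: $\RStab_{\widehat{S}}([0,1])\subseteq\iota_{10}({}_yG_y)$. Let $g\in\RStab_{\widehat{S}}([0,1])$. Since $\infty\in\mathbb{S}^{1}\setminus(0,1)$ is fixed by $g$, Lemma~\ref{lem:stabs1} gives $g\in{}_yG_y$. As $g$ fixes the clopen set $\{0,1\}^{\N}\setminus C_{10}$ pointwise it preserves $C_{10}$, so $g=\iota_{10}(\bar g)$ for a unique $\bar g\in\Homeo(\{0,1\}^{\N})$, and it remains to show $\bar g\in{}_yG_y$. This is exactly the statement that the rigid stabilizer in ${}_yG_y$ of a cylinder is the renormalized copy of ${}_yG_y$ --- the analogue, for ${}_yG_y$, of the elementary fact that $\RStab_{F}(C_{w})=\langle x_{ws}\mid s\rangle\cong F$ in Thompson's group $F$. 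I would prove it by writing $g$ in the standard form $f\,y_{s_{1}}^{t_{1}}\cdots y_{s_{n}}^{t_{n}}$ with $f\in F$ (the ${}_yG_y$-version of the standard form from \cite{lodha16}) and using that $g$ restricts to the identity on the cylinders $C_{0}$ and $C_{11}$: the relations $(1)$--$(2)$ and $(4)$--$(7)$ then let one rewrite $g$, cylinder by cylinder, until only generators with subscript in $C_{10}$ survive. The bookkeeping is of the same character as in the proof of Lemma~\ref{lem:stabs1} and in the normal-form arguments of \cite{lodha16}; no new ideas are involved, but it is the one genuinely combinatorial point. Granting it, $\xi\defeq\iota_{10}^{-1}$ is an isomorphism from $\RStab_{\widehat{S}}([0,1])$ onto ${}_yG_y$.

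It remains to check $\psi\circ\xi=\widehat{\psi}\restriction\RStab_{\widehat{S}}([0,1])$, and since $\RStab_{\widehat{S}}([0,1])$ is generated by the $x_{10s}$ and $y_{10s}$ it suffices to verify this on those generators. We have $\xi(x_{10s})=x_{s}$ and $\xi(y_{10s})=y_{s}$, so by the formulas of Subsection~\ref{sec:chars} we get $\psi(\xi(x_{10s}))=\psi(x_{s})=0$ and $\psi(\xi(y_{10s}))=\psi(y_{s})=1$. On the other hand, by Lemmas~\ref{lem:abelianization} and~\ref{lem:psi_is_ablnztn} the homomorphism $\widehat{\psi}$ is the abelianization map of $\widehat{S}$, which kills every $x$-generator and sends every $y$-generator to $1$; in particular $\widehat{\psi}(x_{10s})=0$ and $\widehat{\psi}(y_{10s})=1$. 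Hence $\psi\circ\xi$ and $\widehat{\psi}$ agree on a generating set of $\RStab_{\widehat{S}}([0,1])$, and therefore coincide, completing the proof.
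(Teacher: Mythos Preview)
Your argument is correct and follows essentially the same route as the paper: both identify $\RStab_{\widehat{S}}([0,1])$, via $\Phi$, with the rigid stabilizer of the cylinder $C_{10}$ in ${}_yG_y$, realize the latter as $\langle x_{10s},y_{10s}\mid s\in\{0,1\}^{<\N}\rangle$, and take $\xi$ to be the map $x_{10s}\mapsto x_s$, $y_{10s}\mapsto y_s$. The only difference is emphasis: the paper simply asserts that this rigid stabilizer is ``naturally isomorphic to ${}_yG_y$'' and that the $\psi$-compatibility is ``clear,'' whereas you spell out the surjectivity step (via Lemma~\ref{lem:stabs1} and a standard-form rewriting) and verify the compatibility on generators---so your version is a more detailed execution of the same proof.
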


\begin{proof}
Recall from the above that the map $\Phi\colon \{0,1\}^\N\to \R\cup \{\infty\}$ conjugates the given action of $\widehat{S}$ on $\{0,1\}^\N$ to the action on $\mathbb{S}^1$. Let $U=\{10\eta\mid \eta\in \{0,1\}^\N\}$. The map $\Phi$ naturally restricts to a map $\Phi\colon U\to [0,1]$. This restriction induces a conjugacy between the rigid stabilizer of $U$ in the first action to the rigid stabilizer of $[0,1]$ in the second action. Since each element of the rigid stabilizer of $U$ in $\widehat{S}$ fixes $1^\infty,0^\infty$, this coincides with the rigid stabilizer of $U$ in $_yG_y$, which is naturally isomorphic to $_yG_y$, being the group generated by
\[
\{x_{10s},y_{10t}\mid s,t\in \{0,1\}^{<\N}\} \text{.}
\]
Now we can build an isomorphism $\xi\colon \RStab_{\widehat{S}}([0,1])\to~_yG_y$ by sending each $x_{10s}$ to $x_s$ and each $y_{10s}$ to $y_s$. Clearly
\[
\psi\circ \xi = \widehat{\psi}\restriction \textup{RStab}_{\widehat{S}}([0,1]) \text{.}
\]
\end{proof}

\begin{lemma}\label{lem:stabs}
The stabilizer in $\widehat{S}$ of any simplex in $\Delta(Z)$ is of type $\F_\infty$.
\end{lemma}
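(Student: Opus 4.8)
The goal is to show that the stabilizer in $\widehat{S}$ of an arbitrary simplex $\sigma$ of $\Delta(Z)$ is of type $\F_\infty$. By Lemma~\ref{lem:cocpt_filt}, the action of $\widehat{S}$ on $\Delta(Z)$ has a single orbit of simplices in each dimension, so it suffices to analyze one representative simplex in each dimension. A convenient choice, using the identification $Z=\Q\cup\{\infty\}$ and the transitivity of $T$ on circularly ordered tuples, is the simplex $\sigma_n$ with vertex set $\{\infty, 0, 1, 2, \dots, n-1\}$ (or any fixed finite set of rationals together with $\infty$). The plan is to identify $\Stab_{\widehat{S}}(\sigma_n)$ concretely and show it is built, by finitely many extensions and finite-index passages, out of copies of $_yG_y$, which is of type $\F_\infty$; since type $\F_\infty$ is closed under extensions and under passing to finite-index over- or sub-groups, this will finish the proof.

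\textbf{Key steps.} First I would reduce to the \emph{pointwise} stabilizer: $\Stab_{\widehat{S}}(\sigma_n)$ contains the pointwise stabilizer $\Fix_{\widehat{S}}(\{\infty,0,\dots,n-1\})$ as a subgroup of index at most $n!$ (an element stabilizing $\sigma_n$ permutes its vertex set, but in fact by looking at the circular order one sees it can only permute them cyclically or fix them; in any case the index is finite). So it suffices to show the pointwise stabilizer is of type $\F_\infty$. Second, I would observe that fixing the finite set $P=\{q_0=\infty, q_1, \dots, q_n\}\subseteq \Q\cup\{\infty\}$ pointwise, together with fixing it setwise, forces every element of the pointwise stabilizer to restrict to an element of the rigid stabilizer of each of the $n+1$ complementary open arcs $I_0,\dots,I_n$ of $\mathbb{S}^1\setminus P$ — because these are piecewise-projective homeomorphisms and fixing the endpoints of an arc means the element maps that arc to itself. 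Thus the pointwise stabilizer is precisely the (internal) direct product $\prod_{j=0}^{n} \RStab_{\widehat{S}}(I_j)$. Third, by the transitivity of $T$ on circularly ordered tuples (Lemma~\ref{lem:cocpt_filt}) each such arc $I_j$ can be carried by an element of $T$ to a standard arc, and I would identify $\RStab_{\widehat{S}}(I_j)$ with a copy of $_yG_y$ exactly as in the proof of Lemma~\ref{lem:stabs2} (the arc $[0,1]$ case): the rigid stabilizer of an arc bounded by two rationals, inside $\widehat{S}$, is conjugate in $T$ to $\RStab_{\widehat{S}}([0,1])\cong {}_yG_y$. Hence the pointwise stabilizer of $\sigma_n$ is a direct product of $n+1$ copies of $_yG_y$, which is of type $\F_\infty$, and therefore so is $\Stab_{\widehat{S}}(\sigma_n)$, which contains it with finite index.

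\textbf{Main obstacle.} The delicate point is step two: correctly pinning down the structure of the pointwise stabilizer of a finite subset of $Z$. One must be sure that an element of $\widehat{S}$ fixing the endpoints of a complementary arc genuinely maps that arc to itself (not to some other arc), and that the resulting decomposition into rigid stabilizers of the arcs is an honest internal direct product — i.e.\ that there is no interaction across the marked points. This is where the piecewise-projective / homeomorphism-of-the-circle picture is essential: continuity plus fixing the endpoints of an arc pins the element to that arc. A secondary technical nuisance is handling the arc adjacent to $\infty$ on both sides when $\infty\in P$; this is purely a bookkeeping matter about circular versus linear order and does not change the conclusion. Once step two is in place, steps one and three are routine given Lemma~\ref{lem:stabs2} and the standard permanence properties of type $\F_\infty$.
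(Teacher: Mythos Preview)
Your proposal is correct and follows essentially the same approach as the paper: reduce to the pointwise stabilizer via finite index, decompose the pointwise stabilizer as a direct product of rigid stabilizers of the complementary arcs, and then use the transitivity of $T$ on circularly ordered pairs together with Lemma~\ref{lem:stabs2} to identify each factor with $_yG_y$. If anything, you are more explicit than the paper about why the pointwise stabilizer equals the internal direct product of the arc rigid stabilizers (the paper asserts this decomposition without spelling out the continuity argument you give in step two).
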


\begin{proof}
Since being of type $\F_\infty$ is invariant under passing to finite index, it suffices to prove that the pointwise stabilizer in $\widehat{S}$ of any simplex is of type $\F_\infty$. For this we will show that the pointwise stabilizer of a $k$-simplex is a direct product of $k+1$ copies of $_yG_y$.

Let $\{r_0,\dots,r_k\}\subset\Q\cup \{\infty\}$ be a $k$-simplex. We assume without loss of generality that $r_0,\dots,r_k$ appear in the natural circular ordering. We would like to show that $\RStab_{\widehat{S}}([r_i,r_{i+1}])$ is isomorphic to $_yG_y$ for each $0\le i\le k$ (modulo $k+1$). We know from Lemma~\ref{lem:cocpt_filt} that the action of $T$ on circularly ordered $2$-tuples is transitive. Therefore, there is an element $f\in T$ such that $[r_i,r_{i+1}]\cdot f=[0,1]$. The restriction of $f$ to these intervals induces a natural isomorphism between their rigid stabilizers. Finally, it follows from Lemma~\ref{lem:stabs2} that these groups are isomorphic to $_yG_y$, as desired.
\end{proof}

Now we quickly obtain:

\begin{corollary}
The group $\widehat{S}$ is of type $\F_\infty$.
\end{corollary}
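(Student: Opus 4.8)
The plan is to assemble the pieces already established in this subsection via the standard criterion for finiteness properties of groups acting on contractible complexes (Brown's criterion), exactly as in the statement of Theorem~\ref{thrm:mainlodha14}. First I would invoke the action of $\widehat{S}$ on the contractible simplicial complex $\Delta(Z)$, which is simply the full simplex on the countable set $Z=\Q\cup\{\infty\}$ and hence is contractible. By Lemma~\ref{lem:cocpt_filt} this action has exactly one orbit of $k$-simplices for each $k\ge 0$, so the quotient $\Delta(Z)/\widehat{S}$ has finitely many cells in each dimension; in particular the action is cocompact on each skeleton. By Lemma~\ref{lem:stabs}, the stabilizer of every simplex is of type $\F_\infty$ (being virtually a direct product of finitely many copies of $_yG_y$, and $_yG_y$ is of type $\F_\infty$ by the remarks in the Introduction). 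These three facts together are precisely the hypotheses of Brown's criterion for $\F_\infty$ (the same criterion underlying Theorem~\ref{thrm:mainlodha14}): a group acting on a contractible CW complex with finitely many cells in each dimension modulo the action and with all cell stabilizers of type $\F_\infty$ is itself of type $\F_\infty$. Hence $\widehat{S}$ is of type $\F_\infty$.

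A slightly more careful phrasing of the argument avoids even appealing to a packaged ``Brown's criterion'' by a direct induction on skeleta: the action of $\widehat{S}$ on the $n$-skeleton $\Delta(Z)^{(n)}$ is cocompact, the $n$-skeleton is $(n-1)$-connected since $\Delta(Z)$ is contractible, and the stabilizers of cells in $\Delta(Z)^{(n)}$ are of type $\F_\infty$, hence in particular of type $\F_{n}$; the standard result that a group acting cocompactly on an $(n-1)$-connected complex with cell stabilizers of type $\F_{n}$ (more precisely, with the stabilizer of each $p$-cell of type $\F_{n-p}$) is itself of type $\F_n$ then gives that $\widehat{S}$ is of type $\F_n$. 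Since $n$ is arbitrary, $\widehat{S}$ is of type $\F_\infty$.

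I do not expect any genuine obstacle here: all the substantive work has already been done in Lemmas~\ref{lem:cocpt_filt} and~\ref{lem:stabs}, and the corollary is a one-line application of a well-known finiteness criterion. The only point requiring a modicum of care is to confirm that ``finitely many orbits of simplices in each dimension'' is the correct cocompactness-on-skeleta hypothesis (it is, since each skeleton is a union of finitely many orbits of cells and $\Delta(Z)$ is locally finite-dimensional in the relevant sense — in fact each skeleton is a genuine CW complex with finitely many $\widehat{S}$-orbits of cells), and that being of type $\F_\infty$ passes through the virtual direct-product structure of the stabilizers, which it does since $\F_\infty$ is inherited by finite-index overgroups and is closed under finite direct products. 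With these routine checks noted, the proof is complete.

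\begin{proof}
The complex $\Delta(Z)$ is contractible, being a simplex. By Lemma~\ref{lem:cocpt_filt} the action of $\widehat{S}$ on $\Delta(Z)$ has exactly one orbit of simplices in each dimension, so $\widehat{S}$ acts cocompactly on each skeleton $\Delta(Z)^{(n)}$. By Lemma~\ref{lem:stabs} the stabilizer in $\widehat{S}$ of each simplex of $\Delta(Z)$ is of type $\F_\infty$. Since $\Delta(Z)$ is contractible, $\Delta(Z)^{(n)}$ is $(n-1)$-connected, and an action of a group on an $(n-1)$-connected CW complex that is cocompact and has all cell stabilizers of type $\F_\infty$ forces the group to be of type $\F_n$ (this is Brown's finiteness criterion; it is the same criterion used to deduce the last sentence of Theorem~\ref{thrm:mainlodha14}). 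Hence $\widehat{S}$ is of type $\F_n$ for every $n$, i.e., of type $\F_\infty$.
\end{proof}
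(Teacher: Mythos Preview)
Your proof is correct and follows essentially the same approach as the paper: invoke contractibility of $\Delta(Z)$, Lemma~\ref{lem:cocpt_filt} for finitely many orbits in each dimension, Lemma~\ref{lem:stabs} for $\F_\infty$ stabilizers, and conclude via Brown's criterion. The paper's version is simply the one-sentence packaged form of what you wrote.
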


\begin{proof}
The action of $\widehat{S}$ on $\Delta(Z)$ has finitely many orbits of cells in each dimension (Lemma~\ref{lem:cocpt_filt}), and stabilizers of type $\F_\infty$ (Lemma~\ref{lem:stabs}), so by Brown's Criterion \cite[Proposition~3.1 and Corollary~3.3(a)]{brown87} $\widehat{S}$ is of type $\F_\infty$.
\end{proof}

\subsection{The proof that $S$ is of type $\F_\infty$}

Now we want to show that $[\pm\widehat{\psi}]\in\Sigma^\infty(\widehat{S})$, which since $S=\ker(\widehat{\psi})$ will show that $S$ is of type $\F_\infty$. To do this, we need some tools from BNSR-invariant theory, which we record here.

\begin{theorem}\cite[Theorem~3.2]{meier98}\label{thrm:bnsr_stabs}
Let $\Gamma$ be a group acting on a contractible complex with finitely many orbits of cells in each dimension. Let $\chi\in\Hom(\Gamma,\R)$ be a character of $\Gamma$ whose restriction $\chi\restriction \Stab_\Gamma(e)$ to the stabilizer of any cell $e$ is non-trivial. Suppose that $[\chi\restriction \Stab_\Gamma(e)]\in \Sigma^\infty(\Stab_\Gamma(e))$ for every cell $e$. Then $[\chi]\in\Sigma^\infty(\Gamma)$.
\end{theorem}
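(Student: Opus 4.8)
The plan is to prove Theorem~\ref{thrm:bnsr_stabs} by assembling, from the $\Sigma^\infty$-data on the cell stabilizers, a single highly connected $\Gamma$-complex that witnesses $[\chi]\in\Sigma^n(\Gamma)$ for each $n$. Write $Q_e$ for $\Stab_\Gamma(e)$. Observe first that the hypothesis $[\chi\restriction Q_e]\in\Sigma^\infty(Q_e)$ presupposes that $\chi\restriction Q_e$ is non-trivial (so that its class lies in the character sphere of $Q_e$) and that $Q_e$ is of type $\F_\infty$; the latter, together with the finitely-many-orbits hypothesis, forces $\Gamma$ to be of type $\F_\infty$ by Brown's criterion \cite{brown87}, so that $\Sigma^\infty(\Gamma)=\bigcap_n\Sigma^n(\Gamma)$ is defined. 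Fix $n$; we will show $[\chi]\in\Sigma^n(\Gamma)$. After barycentric subdivision we may assume $Y$ is an affine cell complex on which $\Gamma$ acts without inversions. For each of the finitely many $\Gamma$-orbits of cells choose a representative $e$ and, using $[\chi\restriction Q_e]\in\Sigma^n(Q_e)$, an $n$-dimensional $(n-1)$-connected $Q_e$-CW complex $W_e$ on which $Q_e$ acts with finitely many orbits of cells in each dimension, with all cell stabilizers contained in $\ker(\chi\restriction Q_e)\subseteq\ker\chi$, equipped with a $(\chi\restriction Q_e)$-equivariant function $h_e\colon W_e^{(0)}\to\R$ whose superlevel filtration $(W_e)_{h_e\ge t}$, $t\in\R$, is essentially $(n-1)$-connected. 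Such a model exists precisely because this essential connectivity is the statement $[\chi\restriction Q_e]\in\Sigma^n(Q_e)$ (Definition~\ref{def:bnsr}).

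Next, assemble the $W_e$ into a single $\Gamma$-complex $\widehat Y$ by the standard complex-of-spaces construction (equivalently, a $\Gamma$-equivariant homotopy colimit) over the poset of cells of $Y$: the value on a cell $\sigma$ of $Y$ is a copy $W_\sigma$ of the model attached to the orbit of $\sigma$, and for $\sigma$ a face of $\tau$ the structure map $W_\sigma\to W_\tau$ is a $\Stab_\Gamma(\tau)$-equivariant map, which exists by equivariant obstruction theory since $W_\tau$ is $(n-1)$-connected and $W_\sigma$ is $n$-dimensional; these maps are normalized to shift heights by a fixed constant $c_\sigma-c_\tau$ depending only on the orbits of $\sigma$ and $\tau$, for a suitable system of constants. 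Then $\Gamma$ acts on $\widehat Y$ with finitely many orbits of cells in each dimension (finitely many orbits of base cells $\sigma$, then finitely many $Q_\sigma$-orbits of cells of $W_\sigma$); the stabilizer of a cell $(\sigma,\delta)$ is $\Stab_{Q_\sigma}(\delta)\subseteq\ker\chi$; the $h_\sigma$ assemble into a $\chi$-equivariant function $h\colon\widehat Y^{(0)}\to\R$; and $\widehat Y$ is $(n-1)$-connected, being a homotopy colimit of $(n-1)$-connected spaces over the poset of cells of $Y$, whose nerve is the barycentric subdivision of the contractible complex $Y$ (a homotopy colimit of an $(n-1)$-connected diagram over a contractible index category is $(n-1)$-connected, via the homology spectral sequence of the homotopy colimit together with van Kampen and Hurewicz). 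By the skeletal reduction used in the proof of Lemma~\ref{lem:bnsr_non_cocpt}, which does not require cocompactness, $[\chi]\in\Sigma^n(\Gamma)$ if and only if the filtration $(\widehat Y_{h\ge t})_{t\in\R}$ is essentially $(n-1)$-connected.

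To see this, observe that $\widehat Y_{h\ge t}$ is itself a complex of spaces over $Y$, with fiber $(W_\sigma)_{h_\sigma\ge t-c_\sigma}$ over $\sigma$. Given $t$, the essential $(n-1)$-connectivity of each of the finitely many fiber filtrations lets us pick a single $s\le t$ such that for every cell $\sigma$ the inclusion $(W_\sigma)_{h_\sigma\ge t-c_\sigma}\hookrightarrow(W_\sigma)_{h_\sigma\ge s-c_\sigma}$ is trivial on $\pi_k$ for all $k\le n-1$. Since the structure maps of the complex of spaces were chosen compatibly with the height functions, the inclusion $\widehat Y_{h\ge t}\hookrightarrow\widehat Y_{h\ge s}$ is induced by a map of homotopy colimits that is $\pi_{\le n-1}$-trivial on each fiber, and a Mayer--Vietoris (skeletal spectral sequence) argument for complexes of spaces, together with van Kampen for $\pi_1$, then shows that this inclusion is itself trivial on $\pi_k$ for $k\le n-1$. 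Hence the filtration is essentially $(n-1)$-connected, so $[\chi]\in\Sigma^n(\Gamma)$; letting $n$ vary gives $[\chi]\in\Sigma^\infty(\Gamma)$.

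I expect the main obstacle to be this last transfer step: deducing essential $(n-1)$-connectivity of the filtration of the total complex from that of the fiber filtrations. This requires a careful treatment of homotopy colimits of filtered complexes of spaces (or of the associated skeletal spectral sequence), together with the bookkeeping needed to make the structure maps $W_\sigma\to W_\tau$ compatible with the height functions; the finitely-many-orbits hypothesis is exactly what allows a single $s$ to work for every fiber at once. The other ingredients --- Brown's criterion, the extraction of the fiber models from $[\chi\restriction Q_e]\in\Sigma^n(Q_e)$, and the connectivity of a homotopy colimit over a contractible base --- are routine.
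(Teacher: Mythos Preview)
The paper does not prove Theorem~\ref{thrm:bnsr_stabs}; it is quoted without proof from \cite[Theorem~3.2]{meier98} and then applied as a black box in the proof of Theorem~\ref{thrm:circle}. So there is no ``paper's own proof'' to compare your attempt against.

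That said, your sketch has the right architecture (a complex-of-spaces / homotopy colimit over the poset of cells of $Y$, with fibers the witnessing complexes $W_e$), and this is indeed the spirit in which results of this type are proved. But there is a genuine gap, and it is not where you think it is. You flag the final ``transfer step'' as the main obstacle, yet the real difficulty is earlier, in the sentence ``these maps are normalized to shift heights by a fixed constant $c_\sigma-c_\tau$ depending only on the orbits''. Equivariant obstruction theory gives you a $Q_\tau$-equivariant map $f\colon W_\sigma\to W_\tau$ because $W_\tau$ is $(n-1)$-connected, but it gives no control whatsoever over $h_\tau\circ f - h_\sigma$. That difference is $Q_\tau$-invariant, but $W_\sigma$ is only $Q_\sigma$-cocompact, not $Q_\tau$-cocompact (and $[Q_\sigma:Q_\tau]$ is typically infinite), so this difference can be unbounded. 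Without height compatibility of the structure maps, the superlevel set $\widehat Y_{h\ge t}$ is \emph{not} the homotopy colimit of the fiber superlevel sets $(W_\sigma)_{h_\sigma\ge t-c_\sigma}$, and the Mayer--Vietoris/spectral-sequence argument in your last paragraph does not get off the ground.

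To repair this one must either build the structure maps more carefully so that they are height-nondecreasing (this is possible but requires choosing specific models for the $W_e$, for instance free $Q_e$-complexes built from a common $\Gamma$-complex, rather than invoking abstract obstruction theory), or abandon the attempt to compare fibers directly and instead use a controlled-connectivity argument in the style of Bieri--Geoghegan \cite{bieri03}. The original proof in \cite{meier98} proceeds via the Abels--Holz higher-generation machinery together with nerve-type arguments, which sidesteps this issue by working with covers rather than with explicit structure maps between classifying spaces.
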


\begin{theorem}\cite[Theorem~1.2]{bieri10prod}\label{thrm:dir_prod}
Let $A$ and $B$ be groups and let $\Gamma=A\times B$. Let $\chi_A\colon A\to \R$ and $\chi_B\colon B\to \R$ be characters and let $\chi_A+\chi_B \colon \Gamma\to \R$ be the character $(\chi_A+\chi_B)(a,b)=\chi_A(a)+\chi_B(b)$. If $[\chi_A]\in\Sigma^\infty(A)$ and $[\chi_B]\in\Sigma^\infty(B)$ then $[\chi_A+\chi_B]\in \Sigma^\infty(\Gamma)$.
\end{theorem}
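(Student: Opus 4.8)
The plan is to prove, for each fixed $n\ge 1$, that $[\chi_A+\chi_B]\in\Sigma^n(\Gamma)$, where $\Gamma=A\times B$; since $n$ is arbitrary this yields $[\chi_A+\chi_B]\in\Sigma^\infty(\Gamma)$. Fix $n$ and an integer $N\ge n$. First I would choose, for each factor, a \emph{strict Morse model}: a cocompact, $(N-1)$-connected affine cell complex $Y_A$ with a cellular $A$-action whose cell stabilizers lie in $\ker\chi_A$, together with a Morse function $(h_A,f_A)$ in the sense of Definition~\ref{def:morse_function} with $h_A$ inducing a positive multiple of $\chi_A$ and such that \emph{every} ascending link $\alk_{Y_A}(v)$ is $(N-1)$-connected; likewise a model $(Y_B,h_B,f_B)$ for $\chi_B$. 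Then I would set $Y\defeq Y_A\times Y_B$, which is an affine cell complex (a product of convex polyhedra is a convex polyhedron) carrying a cocompact $\Gamma$-action; it is $(N-1)$-connected, hence $(n-1)$-connected, and the stabilizer of a product cell $\sigma\times\tau$ is $\Stab_A(\sigma)\times\Stab_B(\tau)\subseteq\ker\chi_A\times\ker\chi_B\subseteq\ker(\chi_A+\chi_B)$. Finally, define $h\colon Y^{(0)}\to\R$ by $h(y,y')\defeq h_A(y)+h_B(y')$, which induces $\chi_A+\chi_B$, and combine $f_A$ and $f_B$ into a tie-breaking function $f$ with inverse well ordered image so that $(h,f)$ is a Morse function on $Y$.

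The key local observation is that ascending links in $Y$ are joins of ascending links in the factors. Since $h(y,y')=h_A(y)+h_B(y')$ is a sum of functions of two independent coordinates, the $(h,f)$-minimum of a product cell $\sigma\times\tau$ is attained precisely at $(v,v')$, where $v$ is the $(h_A,f_A)$-minimum of $\sigma$ and $v'$ the $(h_B,f_B)$-minimum of $\tau$; hence a cell of $Y$ lies in $\asst_Y(v,v')$ and contains $(v,v')$ exactly when it is a product of such a cell in $\asst_{Y_A}(v)$ with such a cell in $\asst_{Y_B}(v')$. Since the link of a vertex in a product of cell complexes is the simplicial join of the links, this gives $\alk_Y(v,v')=\alk_{Y_A}(v)\ast\alk_{Y_B}(v')$. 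A join of an $(N-1)$-connected complex with an $(N-1)$-connected complex is $2N$-connected, and $2N\ge n-1$ since $N\ge n$; so every ascending link of $(h,f)$ on $Y$ is $(n-1)$-connected.

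Now I would finish via the Morse Lemma~\ref{lem:morse}: for all $s\le t$ the inclusion $Y_{h\ge t}\to Y_{h\ge s}$ induces isomorphisms on $\pi_k$ for $k\le n-1$. Passing to the direct limit as $s\to-\infty$, using $\bigcup_{s}Y_{h\ge s}=Y$ and that homotopy groups commute with this filtered colimit, shows that $\pi_k(Y_{h\ge t})\cong\pi_k(Y)=0$ for $k\le n-1$, so each $Y_{h\ge t}$ is $(n-1)$-connected. The filtration $(Y_{h\ge t})_{t\in\R}$ is thus essentially $(n-1)$-connected for the trivial reason that every term is; by Definition~\ref{def:bnsr} we conclude $[\chi_A+\chi_B]\in\Sigma^n(\Gamma)$, and letting $n$ vary finishes the proof.

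The step I expect to be the real obstacle is producing the strict Morse models $(Y_A,h_A,f_A)$ and $(Y_B,h_B,f_B)$. The hypothesis $[\chi_A]\in\Sigma^\infty(A)$ only asserts that the filtration coming from \emph{some} model is essentially $\infty$-connected, which is strictly weaker than having all ascending links highly connected; one must upgrade this, using cocompactness to convert essential connectivity into a uniform bound on how far one must descend, and then performing a thickening/coning construction that cones off the offending spheres in ascending links at the expense of enlarging the complex. A secondary, purely technical point is that when $\chi_A$ or $\chi_B$ has non-discrete image, $h=h_A+h_B$ need not satisfy the $\varepsilon$-separation condition in Definition~\ref{def:morse_function}, so in that generality one must substitute a version of the Morse Lemma valid for such height functions; this does not arise in the discrete-image setting in which the theorem is applied in Section~\ref{sec:circle}.
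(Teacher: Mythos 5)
The paper does not prove this statement; it cites it as \cite[Theorem~1.2]{bieri10prod}. So the comparison here is between your proposal and the argument Bieri--Geoghegan actually give, and the two are genuinely different in kind.

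Your overall plan (strict Morse models for each factor, form the product, identify ascending links in the product with joins of ascending links in the factors, invoke join connectivity and the Morse Lemma) is coherent, and the pieces you spell out are essentially correct: with an injective tie-break chosen to realize the lexicographic order on $(f_A,f_B)$-pairs (possible since countable inverse well orders embed in $\R$), the $(h,f)$-minimum of a product cell $\sigma\times\tau$ is indeed $(\min\sigma,\min\tau)$, so $\alk_Y(v,v')=\alk_{Y_A}(v)\ast\alk_{Y_B}(v')$, and the join of two $(N-1)$-connected complexes is $2N$-connected, which is more than enough when $N\ge n$. The limiting argument at the end is the same device used in Lemma~\ref{lem:bnsr_non_cocpt}.

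The genuine gap is exactly the one you flag, and it is larger than you suggest. Nothing in the standard theory guarantees that $[\chi_A]\in\Sigma^\infty(A)$ yields a cocompact affine cell complex together with a Morse function whose \emph{every} ascending link is $(N-1)$-connected. The defining condition is that \emph{some} model has an essentially $(n-1)$-connected filtration, which is strictly weaker: even with the uniformity coming from cocompactness, $\pi_k(Y_{\chi_A\ge t})\to\pi_k(Y_{\chi_A\ge s})$ being eventually trivial does not force any particular $\alk(v)$ to be highly connected. Your proposed repair --- coning off offending spheres in ascending links --- is not a local fix: attaching a cone $v\ast D$ over a sphere in $\alk(v)$ necessarily creates new cells incident to all the other vertices of $D$, enlarging their ascending links too, and there is no a priori reason that equivariantly iterating this terminates, preserves cocompactness, or does not recreate problems. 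A careful ``build from scratch'' construction along the lines of Brown's criterion might be pushed through, but that would be a nontrivial theorem in its own right, not a routine thickening, and you would in particular need to address that the resulting complex is still an affine cell complex on which $h$ achieves its minimum on each cell at a single vertex.

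For what it is worth, the cited proof in \cite{bieri10prod} does not construct strict Morse models. It proceeds homologically: the product formula for the homological invariants $\Sigma^n(\,\cdot\,;\Z)$ of a direct product follows from K\"unneth-type considerations, and one bootstraps to the homotopical $\Sigma^n$ for $n\ge 2$ using the Hurewicz-flavored fact that $\Sigma^n(G)=\Sigma^2(G)\cap\Sigma^n(G;\Z)$, after handling $\Sigma^1$ and $\Sigma^2$ separately. This sidesteps the need to produce complexes with globally well-behaved ascending links. If you want to salvage your route, the missing ingredient you would need to establish is precisely that membership in $\Sigma^N$ (for discrete characters, say) can be witnessed by a cocompact model all of whose ascending links are $(N-1)$-connected; as written, you are assuming the hardest part.
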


This next fact is immediate from Definition~\ref{def:bnsr}, since if an action by $\Gamma$ is cocompact then so is the induced action of any finite index subgroup.

\begin{theorem}\label{thrm:fin_index}
Let $\Gamma$ be a group of type $\F_\infty$ and $\Gamma'\le \Gamma$ a finite index subgroup. Let $\chi\colon \Gamma\to\R$ be a character, and $\chi\restriction \Gamma'$ its restriction to $\Gamma'$. Then $[\chi]\in\Sigma^\infty(\Gamma)$ if and only if $[\chi\restriction\Gamma']\in\Sigma^\infty(\Gamma')$.
\end{theorem}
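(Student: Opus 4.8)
The plan is to compute the BNSR-invariant on both sides using \emph{the same} highly connected complex and height function, and then invoke the fact that ``essential $(n-1)$-connectivity'' of a filtration is a purely topological condition on the filtration, making no reference to the acting group. Before starting, note that $\Gamma'$, being a finite-index subgroup of a group of type $\F_\infty$, is itself of type $\F_\infty$, so $\Sigma^\infty(\Gamma')$ makes sense; and that if $\chi$ is nontrivial then so is $\chi\restriction\Gamma'$, since otherwise $\Gamma'\le\ker(\chi)$ would force $\ker(\chi)$ to have finite index in $\Gamma$, making $\chi(\Gamma)\cong\Gamma/\ker(\chi)$ a finite, hence trivial, subgroup of $(\R,+)$, contradicting $\chi\ne 0$. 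So $[\chi]$ and $[\chi\restriction\Gamma']$ are simultaneously well defined, and we assume they are.

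Fix $n\in\N$; it suffices to prove $[\chi]\in\Sigma^n(\Gamma)\iff[\chi\restriction\Gamma']\in\Sigma^n(\Gamma')$ and then intersect over all $n$. Since $\Gamma$ is of type $\F_n$, by Definition~\ref{def:bnsr} (and the remark following it) choose an $(n-1)$-connected CW complex $Y$ with a cocompact $\Gamma$-action whose cell stabilizers lie in $\ker(\chi)$, together with $h_\chi\colon Y\to\R$ satisfying $h_\chi(g.y)=\chi(g)+h_\chi(y)$. Restricting the action to $\Gamma'$ changes nothing about $Y$ or $h_\chi$; the $\Gamma'$-action is still cocompact because $[\Gamma:\Gamma']<\infty$; the cell stabilizers become $\Stab_\Gamma(e)\cap\Gamma'\subseteq\ker(\chi)\cap\Gamma'=\ker(\chi\restriction\Gamma')$; and $h_\chi(g.y)=(\chi\restriction\Gamma')(g)+h_\chi(y)$ for all $g\in\Gamma'$. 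Thus $(Y,h_\chi)$ is equally a legitimate choice for computing $\Sigma^n(\Gamma')$ relative to $\chi\restriction\Gamma'$.

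The decisive observation is that the filtration $(Y_{\chi\ge t})_{t\in\R}$ in Definition~\ref{def:bnsr} is determined by $Y$ and $h_\chi$ alone --- in either case $Y_{\chi\ge t}$ is the full subcomplex of $Y$ on the vertices $y$ with $h_\chi(y)\ge t$ --- so the two groups produce the very same filtration, and essential $(n-1)$-connectivity is a property of that filtration. Hence $[\chi]\in\Sigma^n(\Gamma)$ iff $(Y_{\chi\ge t})_t$ is essentially $(n-1)$-connected iff $[\chi\restriction\Gamma']\in\Sigma^n(\Gamma')$; intersecting over all $n$ gives the result. I do not expect a genuine obstacle: the only point needing care is the bookkeeping confirming that the three defining conditions --- high connectivity of $Y$, cocompactness of the action, and cell stabilizers inside the kernel --- all survive restriction to a finite-index subgroup, together with the elementary fact that a nontrivial character stays nontrivial there.
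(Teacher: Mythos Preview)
Your proof is correct and follows exactly the approach the paper indicates: the paper simply remarks that the result ``is immediate from Definition~\ref{def:bnsr}, since if an action by $\Gamma$ is cocompact then so is the induced action of any finite index subgroup,'' and you have spelled this out in full, together with the bookkeeping that $\chi\restriction\Gamma'$ stays nontrivial and that stabilizers stay in the kernel.
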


Now we can prove that $S$ is of type $\F_\infty$:

\begin{proof}[Proof of Theorem~\ref{thrm:circle}]
It suffices to show that $[\pm\widehat{\psi}]\in\Sigma^\infty(\widehat{S})$. Consider the action of $\widehat{S}$ on the contractible complex $\Delta(Z)$. This has finitely many orbits of simplices in each dimension by Lemma~\ref{lem:cocpt_filt}. By Theorem~\ref{thrm:bnsr_stabs} it now suffices to prove that $[\pm\widehat{\psi}\restriction\Stab_{\widehat{S}}(e)]\in \Sigma^\infty(\Stab_{\widehat{S}}(e))$ for every simplex $e$ of $\Delta(Z)$. By the proof of Lemma~\ref{lem:stabs} together with Lemma~\ref{lem:stabs2}, any $\Stab_{\widehat{S}}(e)$ contains a finite index subgroup $D$ isomorphic to a direct product of finitely many copies of $_yG_y$, and the restriction of $\pm\widehat{\psi}$ to $D$ coincides under this isomorphism with the sum of the characters $\pm\psi\colon _yG_y\to\R$ on each copy of $_yG_y$. Since $[\pm\psi]\in\Sigma^\infty(_yG_y)$ by Theorem~\ref{thrm:main}, we conclude from Theorem~\ref{thrm:dir_prod} that $[\pm\widehat{\psi}\restriction D]\in\Sigma^\infty(D)$, which by Theorem~\ref{thrm:fin_index} implies that $[\pm\psi\restriction\Stab_{\widehat{S}}(e)]\in \Sigma^\infty(\Stab_{\widehat{S}}(e))$, as desired.
\end{proof}

Whereas $G$, $G_y$, $_yG$, and $_yG_y$ are ``$F$-like'' and $\widehat{S}$ and $S$ are ``$T$-like'', one could also consider a ``$V$-like'' Lodha--Moore-esque group, e.g., $V(G)\defeq \langle V,G\rangle \le \Homeo(\{0,1\}^\N)$. This is obviously finitely generated. Also, its abelianization is $\Z$, and its commutator subgroup $[V(G),V(G)]$ is simple and coincides with $\langle V,S\rangle \le \Homeo(\{0,1\}^\N)$. These facts can be demonstrated by working out the presentation of $V(G)$ in a manner that is similar to the above for $\widehat{S}$. It is unclear however how to prove that $V(G)$ and $[V(G),V(G)]$ are of type $\F_\infty$, though we suspect this is the case. The methods used here for $\widehat{S}$ and $S$ do not work, and the cluster complex approach to $G$ from \cite{lodha14} appears to be far too complicated. Thus, we leave this as a conjecture:

\begin{conjecture}
The ``$V$-like Lodha--Moore group'' $V(G)$ and its commutator subgroup are of type $\F_\infty$.
\end{conjecture}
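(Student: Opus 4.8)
The plan is to imitate, as closely as possible, the two-stage strategy used above for the pair $(\widehat{S},S)$. \textbf{Stage 1} is to produce a contractible CW complex $\mathcal{C}$ on which $V(G)$ acts with finitely many orbits of cells in each dimension and with every cell stabilizer of type $\F_\infty$; Brown's criterion \cite{brown87} then shows $V(G)$ is of type $\F_\infty$. \textbf{Stage 2} is to let $\psi_V\colon V(G)\to\Z$ be the abelianization map (recall the abelianization of $V(G)$ is $\Z$, so $[V(G),V(G)]=\ker(\psi_V)$) and to show $[\pm\psi_V]\in\Sigma^\infty(V(G))$; Theorem~\ref{thrm:bnsr_fin_props} then gives that $[V(G),V(G)]$ is of type $\F_\infty$ as well. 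This parallels exactly the route from ``$\widehat{S}$ is of type $\F_\infty$'' to ``$S$ is of type $\F_\infty$'' in Section~\ref{sec:circle}.

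For Stage 1 the delicate point is to choose $\mathcal{C}$ so that its cell stabilizers are visibly of type $\F_\infty$ \emph{without already knowing this for $V(G)$ itself}. Two candidate constructions present themselves. The first is to imitate \cite{lodha14} directly: take $\mathcal{C}^{(0)}=V\backslash V(G)$, the right cosets of Thompson's group $V$, and attach higher cells in ``clusters'' as in Section~\ref{sec:our_complex}, using a $V$-version of the set $\mathcal{S}$ of special forms; the cell stabilizers would then be conjugates of $V$, which is of type $\F_\infty$, and a discrete Morse argument on $\mathcal{C}$ in the spirit of Section~\ref{sec:main} (or the direct argument of \cite{lodha14}) would establish contractibility. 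The second is to build a Stein--Farley-style cube complex on a poset of ``admissible partitions'' of the Cantor set $\{0,1\}^\N$ --- finite partitions into clopen pieces drawn from a suitable $V(G)$-invariant family, ordered by refinement; here the rigid stabilizer of a cell would be a direct product of finitely many copies of a Lodha--Moore group ($_yG_y$, $G_y$, or $_yG$), the full stabilizer would be such a product extended by a finite group of piece-permutations, hence virtually a direct product of copies of $_yG_y$ and so of type $\F_\infty$ by \cite{lodha14}, while a Stein/Morse argument would give contractibility. Either way one finishes with Brown's criterion.

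Granting such a $\mathcal{C}$, Stage 2 runs essentially verbatim as in the proof of Theorem~\ref{thrm:circle}. The action of $V(G)$ on $\mathcal{C}$ has finitely many orbits of cells in each dimension, and (arranging the construction as above) every cell stabilizer contains a finite-index subgroup isomorphic to a direct product of finitely many copies of $_yG_y$, with $\pm\psi_V$ restricting under this isomorphism to the sum of the characters $\pm\psi\colon {}_yG_y\to\R$ on the factors. Since $[\pm\psi]\in\Sigma^\infty(_yG_y)$ by Theorem~\ref{thrm:main}, Theorem~\ref{thrm:dir_prod} puts this class in $\Sigma^\infty$ of the product, Theorem~\ref{thrm:fin_index} promotes it to the stabilizer, and Theorem~\ref{thrm:bnsr_stabs} then yields $[\pm\psi_V]\in\Sigma^\infty(V(G))$. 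Applying Theorem~\ref{thrm:bnsr_fin_props} with $\psi_V$ and arbitrary $n$ shows $\ker(\psi_V)=[V(G),V(G)]$ is of type $\F_\infty$.

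The main obstacle is Stage 1 --- the construction of $\mathcal{C}$ --- and in particular reconciling $V(G)$-invariance with having small, already-understood cell stabilizers. The cluster-complex route demands redoing the already intricate combinatorics of \cite{lodha14} in the presence of the extra ``permutation'' freedom coming from $V$, which inflates the bookkeeping dramatically; this is why it is described above as ``far too complicated''. The Stein--Farley route is structurally cleaner but hits a genuine difficulty: the obvious poset of \emph{dyadic} partitions is not $V(G)$-invariant, since the generators $y_s$ do not carry cylinders to cylinders, whereas for the naive poset of \emph{all} clopen partitions the rigid stabilizer of a one-piece ``partition'' is again $V(G)$ itself, so Brown's criterion becomes circular. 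Finding a family of clopen partitions rich enough to be $V(G)$-invariant yet with cell stabilizers that are honest (strictly smaller) Lodha--Moore groups is the crux, and it is this point --- not Stage 2, which is routine once Stage 1 is in place --- that keeps the statement a conjecture.
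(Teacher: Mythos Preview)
The statement under discussion is a \emph{conjecture}: the paper offers no proof, only the remark that ``the methods used here for $\widehat{S}$ and $S$ do not work, and the cluster complex approach to $G$ from \cite{lodha14} appears to be far too complicated.'' Your proposal is, appropriately, not a proof either but a discussion of a plausible two-stage strategy together with an honest account of where it breaks down; you yourself conclude that the Stage~1 construction is what ``keeps the statement a conjecture.'' In that sense your write-up is consistent with the paper's position, and indeed more informative than the paper's single sentence: you spell out \emph{why} the $\widehat{S}$/$S$ template fails (the natural $\Delta(Z)$-type complex has no analogue here, and the Stein--Farley poset route either loses $V(G)$-invariance or makes the stabilizers equal to $V(G)$ itself, rendering Brown's criterion circular), and you isolate Stage~2 as routine once Stage~1 is in hand.

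One caution: in your first candidate construction you suggest taking $\mathcal{C}^{(0)}=V\backslash V(G)$ and building clusters as in Section~\ref{sec:our_complex}. Even if the combinatorics could be pushed through, the cell stabilizers here would be conjugates of $V$, not products of copies of $_yG_y$, so the Stage~2 argument you sketch (restricting $\psi_V$ to products of $_yG_y$'s and invoking $[\pm\psi]\in\Sigma^\infty(_yG_y)$) would not apply directly to this complex; you would instead need a separate Morse-theoretic argument on $\mathcal{C}$ in the style of Section~\ref{sec:main}. Your second (Stein--Farley) candidate is the one for which the Stage~2 template genuinely matches, but as you note, producing the right $V(G)$-invariant family of partitions with strictly smaller rigid stabilizers is exactly the missing idea.
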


\bibliographystyle{alpha}

\begin{thebibliography}{MMV98}

\bibitem[BB97]{bestvina97}
Mladen Bestvina and Noel Brady.
\newblock Morse theory and finiteness properties of groups.
\newblock {\em Invent. Math.}, 129(3):445--470, 1997.

\bibitem[BCR18]{burillo18a}
Jos\'{e} Burillo, Sean Cleary, and Claas~E. R\"{o}ver.
\newblock Obstructions for subgroups of {T}hompson's group {$V$}.
\newblock In {\em Geometric and cohomological group theory}, volume 444 of {\em
  London Math. Soc. Lecture Note Ser.}, pages 1--4. Cambridge Univ. Press,
  Cambridge, 2018.

\bibitem[BG03]{bieri03}
Robert Bieri and Ross Geoghegan.
\newblock Connectivity properties of group actions on non-positively curved
  spaces.
\newblock {\em Mem. Amer. Math. Soc.}, 161(765):xiv+83, 2003.

\bibitem[BG10]{bieri10prod}
Robert Bieri and Ross Geoghegan.
\newblock Sigma invariants of direct products of groups.
\newblock {\em Groups Geom. Dyn.}, 4(2):251--261, 2010.

\bibitem[BGK10]{bieri10}
Robert Bieri, Ross Geoghegan, and Dessislava~H. Kochloukova.
\newblock The sigma invariants of {T}hompson's group {$F$}.
\newblock {\em Groups Geom. Dyn.}, 4(2):263--273, 2010.

\bibitem[BHM]{belk20}
James Belk, James Hyde, and Francesco Matucci.
\newblock Embeddings of $\mathbb{Q}$ into some finitely presented groups.
\newblock arXiv:2005.02036.

\bibitem[BLR18]{burillo18}
Jos\'{e} Burillo, Yash Lodha, and Lawrence Reeves.
\newblock Commutators in groups of piecewise projective homeomorphisms.
\newblock {\em Adv. Math.}, 332:34--56, 2018.

\bibitem[BLT19]{bonatti19}
Christian Bonatti, Yash Lodha, and Michele Triestino.
\newblock Hyperbolicity as an obstruction to smoothability for one-dimensional
  actions.
\newblock {\em Geom. Topol.}, 23(4):1841--1876, 2019.

\bibitem[BNS87]{bieri87}
Robert Bieri, Walter~D. Neumann, and Ralph Strebel.
\newblock A geometric invariant of discrete groups.
\newblock {\em Invent. Math.}, 90(3):451--477, 1987.

\bibitem[BR88]{bieri88}
Robert Bieri and Burkhardt Renz.
\newblock Valuations on free resolutions and higher geometric invariants of
  groups.
\newblock {\em Comment. Math. Helv.}, 63(3):464--497, 1988.

\bibitem[Bro87]{brown87}
Kenneth~S. Brown.
\newblock Finiteness properties of groups.
\newblock In {\em Proceedings of the {N}orthwestern conference on cohomology of
  groups ({E}vanston, {I}ll., 1985)}, volume~44, pages 45--75, 1987.

\bibitem[Bur]{Burillo}
Jos\'{e} Burillo.
\newblock Notes on the {L}odha-{M}oore groups.
\newblock \url{https://web.mat.upc.edu/pep.burillo/book_en.php}.

\bibitem[Bux04]{bux04}
Kai-Uwe Bux.
\newblock Finiteness properties of soluble arithmetic groups over global
  function fields.
\newblock {\em Geom. Topol.}, 8:611--644, 2004.

\bibitem[GS87]{ghys87}
\'{E}tienne Ghys and Vlad Sergiescu.
\newblock Sur un groupe remarquable de diff\'{e}omorphismes du cercle.
\newblock {\em Comment. Math. Helv.}, 62(2):185--239, 1987.

\bibitem[Hig74]{higman74}
Graham Higman.
\newblock {\em Finitely presented infinite simple groups}.
\newblock Department of Pure Mathematics, Department of Mathematics, I.A.S.
  Australian National University, Canberra, 1974.
\newblock Notes on Pure Mathematics, No. 8 (1974).

\bibitem[LM16]{lodha16}
Yash Lodha and Justin~Tatch Moore.
\newblock A nonamenable finitely presented group of piecewise projective
  homeomorphisms.
\newblock {\em Groups Geom. Dyn.}, 10(1):177--200, 2016.

\bibitem[Lod]{lodha14}
Yash Lodha.
\newblock A nonamenable type {$\text{F}_\infty$} group of piecewise projective
  homeomorphisms.
\newblock Under revision for \emph{J. Topol}. arXiv:1408.3127.

\bibitem[Lod19]{lodha19}
Yash Lodha.
\newblock A finitely presented infinite simple group of homeomorphisms of the
  circle.
\newblock {\em J. Lond. Math. Soc. (2)}, 100(3):1034--1064, 2019.

\bibitem[MFO18]{MFO}
Cohomological and metric properties of groups of homeomorphisms of {R}.
\newblock {\em Oberwolfach Rep.}, 15(2):1579--1633, 2018.
\newblock Abstracts from the workshop held June 3--9, 2018, Organized by
  Jos\'{e} Burillo, Kai-Uwe Bux and Brita Nucinkis.

\bibitem[MMV98]{meier98}
John Meier, Holger Meinert, and Leonard VanWyk.
\newblock Higher generation subgroup sets and the {$\Sigma$}-invariants of
  graph groups.
\newblock {\em Comment. Math. Helv.}, 73(1):22--44, 1998.

\bibitem[Ste92]{stein92}
Melanie Stein.
\newblock Groups of piecewise linear homeomorphisms.
\newblock {\em Trans. Amer. Math. Soc.}, 332(2):477--514, 1992.

\bibitem[Zar]{zaremsky18}
Matthew C.~B. Zaremsky.
\newblock Geometric structures related to the braided {T}hompson groups.
\newblock Submitted. arXiv:1803.02717.

\bibitem[Zar16]{zaremsky16}
Matthew C.~B. Zaremsky.
\newblock H{NN} decompositions of the {L}odha-{M}oore groups, and topological
  applications.
\newblock {\em J. Topol. Anal.}, 8(4):627--653, 2016.

\bibitem[Zar17a]{zaremsky17PB_n}
Matthew C.~B. Zaremsky.
\newblock Separation in the {BNSR}-invariants of the pure braid groups.
\newblock {\em Publ. Mat.}, 61(2):337--362, 2017.

\bibitem[Zar17b]{zaremsky17F_n}
Matthew Curtis~Burkholder Zaremsky.
\newblock On the {$\Sigma$}-invariants of generalized {T}hompson groups and
  {H}oughton groups.
\newblock {\em Int. Math. Res. Not. IMRN}, (19):5861--5896, 2017.

\end{thebibliography}

\end{document}